\def\thm@space@setup{%
  \thm@preskip=\parskip \thm@postskip=0pt
}
\newcommand{\E}{\mathbf{E}}
\newcommand{\dist}{\mbox{\rm dist}}
\newcommand{\EE}[1]{\E\l[#1\r]}
\renewcommand\Re{\operatorname{Re}}
\renewcommand\Im{\operatorname{Im}}
\def\intt{\int\limits}
\def\to{\rightarrow}
\def\mb{\mbox}
\newcommand{\para}[1]{\vspace{4mm}\noindent{\bfseries #1:}}
\def\l{\left}
\def\r{\right}
\def\<{\langle}
\def\>{\rangle}
\newcommand{\ba}{\[\begin{aligned}}
\newcommand{\ea}{\end{aligned}\]}
\newcommand\mnote[1]{} 
\newcommand{\beq}[1]{\begin{equation}\label{#1}}
\newcommand\eeq{\end{equation}}
\newcommand\ben{\begin{equation}}
\newcommand\een{\end{equation}}
\newcommand\bes{\begin{eqnarray*}}
\newcommand\ees{\end{eqnarray*}}
\newcommand\besn{\begin{eqnarray}}
\newcommand\eesn{\end{eqnarray}}
\def\bthm{\begin{theorem}}
\def\ethm{\end{theorem}}
\def\bdefn{\begin{definition}}
\def\edefn{\end{definition}}
\newcommand{\benu}{\begin{enumerate}\setlength\itemsep{6pt}}
\newcommand{\beit}{\begin{itemize}\setlength\itemsep{3pt}}
\def\eenu{\end{enumerate}}
\def\eeit{\end{itemize}}
\def\beds{\begin{description}}
\def\eeds{\end{description}}
\def\bepr{\begin{problem}}
\def\eepr{\end{problem}}
\def\bprf{\begin{proof}}
\def\eprf{\end{proof}}
\def\berk{\begin{remark}}
\def\eerk{\end{remark}}
\def\bex{\begin{exercise}}
\def\eex{\end{exercise}}
\def\beg{\begin{example}}
\def\eeg{\end{example}}
\def\suchthat{{\; : \;}}
\def\PP{{\mathcal P}}
\def\C{\mathbb{C}}
\def\D{\mathbb{D}} 
\def\EE{\mathbb{E}}
\def\N{\mathbb{N}}
\def\bP{\mathbb{P}}
\def\T{\mathbb{T}} 
\def\R{\mathbb{R}}
\def\Z{\mathbb{Z}}
\newcommand{\supp}{\mbox{\textrm supp}}
\newcommand{\sm}{{\raise0.3ex\hbox{$\scriptstyle \setminus$}}}
\newcommand{\e}{\varepsilon}
\def\alp{\alpha}
\def\bet{\beta}
\renewcommand\phi{\varphi}
\theoremstyle{plain} 
    \newtheorem{theorem}{Theorem}
    \newtheorem*{theorem*}{Theorem}
    \newtheorem{lemma}[theorem]{Lemma}
    \newtheorem{proposition}[theorem]{Proposition}
    \newtheorem{corollary}[theorem]{Corollary}
    \newtheorem{claim}[theorem]{Claim}
\theoremstyle{definition} 
    \newtheorem{definition}[theorem]{Definition}
       \newtheorem{problem}[theorem]{\bf }
    \newtheorem{exercise}[theorem]{Exercise}
        \newtheorem{remark}[theorem]{Remark}
    \newtheorem{example}[theorem]{Example}
\renewcommand{\bex}{\indent\begin{exercise}}
\renewcommand\subset{\subseteq}
\newcommand\eps{\varepsilon}
\begin{document}

\begin{abstract}
Erd\"os posed in 1940 the extremal problem of studying the minimal area of the lemniscate $\{|p(z)|<1\}$ of a monic polynomial $p$ of degree $n$ all of whose zeros are in the closed unit disc.
In this article, we prove that there exist positive constants $c,C$ independent of the degree $n$ such that 
\[ \dfrac{c}{\log n} \leq \min \text{Area}( \{ |p(z)|<1 \} ) \leq \frac{C}{\log \log n},\] 
improving substantially the previously best known lower bound (due to Pommerenke in 1961) as well as improving the best known upper bound (due to Wagner in 1988). We also study the inradius (radius of the largest inscribed disc); we provide an estimate for the inradius in terms of the area that confirms a 2009 conjecture of Solynin and Williams, and we use this to give a lower bound of order $(n \sqrt{\log n})^{-1}$ on the inradius, addressing a 1958 problem posed by Erd\"os, Herzog, and Piranian (confirming their conjecture up to the logarithmic factor).
In addition to studying the area of $\{|p(z)|<1\}$, we consider other sublevel sets $\{|p(z)|<t\}$, proving both upper and lower bounds of the same order $1/\log \log n$ when $t>1$ and proving power law upper and lower bounds when $0<t<1$.  We also consider the minimal area problem under a more general constraint, namely, replacing the unit disc with a compact set $K$ of unit capacity, where we show that the minimal area converges to zero as $n \rightarrow \infty$ (giving an affirmative answer to another question of Erd\"os, Herzog, Piranian); we also investigate the structure of the area minimizing polynomials, showing that the normalized zero-counting measure converges to the equilibrium measure of $K$ as the degree $n \rightarrow \infty$.
\end{abstract}

\title{On the area of polynomial lemniscates}

\author{Manjunath Krishnapur}
\address{Department of Mathematics, Indian Institute of Science, Bangalore, India-560012}
\email{manju@iisc.ac.in}

\author{Erik Lundberg}
\address{Department of Mathematics and Statistics, Florida Atlantic University, Boca Raton, USA-33431.}
\email{elundber@fau.edu}

\author{Koushik Ramachandran}
\address{Tata Institute of Fundamental Research, Centre for Applicable Mathematics, Bengaluru, India-560065}
\email{koushik@tifrbng.res.in}

\date{}
\maketitle


\section{Introduction}

The study of lemniscates (level sets and sublevel sets of complex polynomials and rational functions) has a long and rich history, and several classically studied extremal problems concern the geometry of lemniscates. 
Let $p(z)$ be a monic complex polynomial of degree $n$, and consider the (filled) lemniscate $\Lambda_p = \{z \in \C : |p(z)| < 1 \}$.  In the current paper, we consider the area and inradius of this set.

P\'{o}lya proved in 1928 \cite{Polya1928} that the area $A(\Lambda_p)$ satisfies the upper bound $A(\Lambda_p) \leq \pi$ where equality holds if and only if $p(z)$ is (a translation and rotation of) $z^n$; we also mention that Cartan showed \cite{Cartan} in the same year that the lemniscate $\Lambda_p$ is contained in the union of at most $n$ discs the sum of whose radii does not exceed $2e$.  In 1940, Erd\"os~\cite{Erdos1940} posed the problem of studying the minimal area under the constraint that the zeros of $p$ are contained in the closed unit disc.  It is necessary to impose some such constraint on the location of zeros, since otherwise the area of $\Lambda_p$ can be made arbitrarily small by simply moving the zeros of $p$ far from each other.  
Erd\"os restated this problem several times throughout his career.\footnote{The version of the problem stated in \cite{Erdos1940} asks about the portion of the lemniscate inside the unit disc with the zeros confined to the unit circle, while subsequent statements of the problem concern the area of the full lemniscate under the relaxed constraint of zeros being in the closed unit disc.  Since Theorem \ref{thm:arealevel1} relates the two constraints, and since the proof of our lower bound estimates the portion inside the unit disc, our results address both versions of the problem.}  In particular, the minimal area problem was included among the list of geometric problems on polynomials posed in the 1958 paper of Erd\"os, Herzog, and Piranian \cite{EHP}, where the authors applied a result of MacLane \cite{MacLane} in order to show (correcting an erroneous statement in \cite{Erdos1940}) that the minimal area converges to zero as $n \rightarrow \infty$.
Perhaps surprisingly, this statement was not made quantitative until 1988, when Wagner constructed, for arbitrary $\delta>0$, examples with area at most a constant times $(\log \log n)^{-\frac{1}{2} + \delta}$, establishing an upper bound on the minimal area.
Well before this, Pommerenke had proved in 1961 \cite{Pomm1961} a lower bound on the inradius (radius of the largest inscribed disc, see the discussion below), showing as a consequence that the minimal area is at least a constant times $1/n^4$.
In problem lists \cite{Erdos1976}, \cite{Erdos1979} published by Erd\"os in the 1970s, he included the minimal area problem along with the conjecture that for every $\eta >0$ the area is bounded below by $n^{-\eta}$ for all $n$ sufficiently large, and when reprising the problem in the 1980s \cite{Erdos1981}, he asked whether the even stronger lower bound of order $(\log n)^{-1}$ holds.
This specific question is included in the version of the minimal area problem that appears on the recently curated Erd\"os problems website \cite{ErdosWebsite}.
Our following main result gives an affirmative answer, improving substantially Pommerenke's estimate, while also improving the upper bound due to Wagner.

\begin{theorem*}\label{thm:mainthmsimple} Let $\mathcal P_n(\overline{\D}) $ denote the set of monic polynomials of degree $n$ having all zeros in the closed unit disc.  Then for $n\ge 3$,
\[ \dfrac{c}{\log n} \leq \inf_{p \in \mathcal P_n(\overline{\D})}  m(\Lambda_p) \leq \frac{C}{\log \log n},\]
where $m(\cdot)$ denotes the two-dimensional Lebesgue measure.
\end{theorem*}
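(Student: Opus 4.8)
\textbf{Upper bound.}
The plan is to build an explicit polynomial. The constraint is that all zeros lie in $\overline{\D}$, so I get to design a zero-counting measure supported on the disc. The intuition is to spread zeros out along a clever family of circles and carefully count in an annulus: to make $|p(z)|>1$ on most of $\C$, I want $\log|p(z)| = \sum \log|z-z_j|$ to stay positive away from a thin neighborhood of the zero set. The classical trick (going back to the Erd\"os--Herzog--Piranian/MacLane circle of ideas, and sharpened by Wagner) is to place zeros at roots of unity scaled to a geometric sequence of radii $r_k = 1 - 2^{-k}$ or similar, putting roughly equal mass on each circle; then $\log|p(z)|$ behaves like a Riemann sum for a logarithmic potential whose lemniscate $\{<1\}$ is a union of thin annular collars. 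Estimating the total area of these collars and optimizing the number of circles and the allocation of zeros gives $O(1/\log\log n)$. I expect this construction to essentially reproduce Wagner's example (or a mild variant), with the improvement from $(\log\log n)^{-1/2+\delta}$ to $(\log\log n)^{-1}$ coming from a more careful accounting of the overlap between neighboring collars and a better balance of the parameters. The main work here is a clean potential-theoretic estimate showing the bad set has small area; this is routine but delicate bookkeeping.

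\textbf{Lower bound.}
This is the hard part and the heart of the theorem. Fix any monic $p\in\mathcal P_n(\overline\D)$; I must show $m(\Lambda_p)\gtrsim 1/\log n$. The strategy I would pursue is to work inside the unit disc (where, by the footnote and the referenced Theorem~\ref{thm:arealevel1}, it suffices to bound the portion of $\Lambda_p\cap\D$) and exploit that $u(z):=\log|p(z)|$ is subharmonic with Laplacian $2\pi\sum\delta_{z_j}$, i.e. $u$ is the logarithmic potential of $n$ unit masses in $\overline\D$. The set $\Lambda_p=\{u<0\}$ must contain a neighborhood of every zero, and the key quantitative input is a local estimate: near a zero of multiplicity $m$, $\{u<0\}$ contains a disc of area $\gtrsim$ something, but when zeros cluster this degrades, so a pure ``sum over zeros'' argument only gives a power of $n$ (this is morally Pommerenke's bound). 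To get the logarithmic bound one needs a global mechanism. The approach I would take: consider the harmonic function $v$ on $\D$ with boundary values $u|_{\partial\D}$ (the harmonic extension), so $u = v - G\mu$ where $G\mu\ge 0$ is the Green potential of the zero measure $\mu$ and $v$ is harmonic; then $\{u<0\}\supseteq\{G\mu > v\}$. One shows $\int_\D G\mu\, dm$ is bounded below by a constant (the Green energy of $n$ points in the disc against itself is at least a constant times $n$ in $L^1$, in fact $\int G\mu\,dm \gtrsim \sum (1-|z_j|)$-type quantities plus interaction terms), while $v$ cannot be too large on too big a set because $\int_{\partial\D}\max(u,0) \le$ (something controlled). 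Balancing ``$G\mu$ has mass $\ge c$ but $\{G\mu>v\}$ is small'' forces $G\mu$ to be large on the small bad complement, which via a maximum-principle/two-constants argument bounds its sup, and a sup bound on a logarithmic potential of $n$ points forces — this is the crux — the area of the sublevel set to be $\gtrsim 1/\log n$ because a logarithmic potential of total mass $n$ that exceeds a constant only on a set of area $\alpha$ must have sup $\gtrsim \log(1/\alpha)$ minus $\log n$, and matching $\log(1/\alpha) \lesssim \log n$ gives $\alpha\gtrsim n^{-O(1)}$ — so additional care is needed to upgrade from $n^{-O(1)}$ to $1/\log n$.

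\textbf{Refining the lower bound.}
To actually reach $1/\log n$ rather than a power of $n$, I expect the real argument must avoid bounding the area through the sup of the potential and instead use an entropy/rearrangement inequality directly. The refined plan: let $E=\Lambda_p\cap\D$ with $m(E)=\alpha$. On $E^c\cap\D$ we have $|p|\ge 1$. Apply Jensen's formula / Green's representation to $\log|p|$ with respect to area measure (not arclength): $\frac{1}{m(\D)}\int_\D \log|p|\,dm = \log|p(0)|\text{-type term} + \frac{1}{2\pi}\sum_j \big(\text{integral of Green's function of }\D\text{ against area}\big)$, and the Green-function-against-area integral is a fixed bounded quantity for each zero. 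Hence $\int_\D \log|p|\,dm = O(n)$. On the other hand $\int_\D \log|p|\,dm = \int_E \log|p|\,dm + \int_{E^c}\log|p|\,dm \ge \int_E \log|p|\,dm$ and $\int_E\log|p|\,dm$ is very negative unless $E$ is large: indeed $\log|p|$ restricted to $E$ has $\int_E \log|p|\,dm \ge -C n$ but also, since $\log|p|$ is subharmonic with point masses, a lower bound on $\int_E \log|p|$ combined with an upper bound on $m(E)$ is exactly a logarithmic-capacity statement — by the classical fact that among sets of given area the disc minimizes $\int \log\frac{1}{|z-a|}$, one gets $\int_E \log|p|\,dm \ge \frac{n}{?}\cdot (\text{something like }m(E)\log m(E))$, no wait, $\le$ — so $-Cn \le \int_E\log|p|\,dm$ fails to pin things down without using that $E$ contains the zeros. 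The clean version: since every zero $z_j\in\overline\D$ lies in $\overline{\Lambda_p}$, and $\Lambda_p\cap\D$ is open, by the area-capacity (Pólya-type) inequality $\mathrm{cap}(\Lambda_p)\ge \sqrt{m(\Lambda_p)/\pi}$, while $\mathrm{cap}(\Lambda_p)=1$ forces nothing — so instead I use that $\Lambda_p$ with all $n$ zeros has $\mathrm{cap}(\Lambda_p)\le 1$ always, and we need a lower bound; here the $1/\log n$ enters because a connected-components count shows $\Lambda_p$ has at most $n$ components, each of capacity $\le$ product-type bound, and distributing area among $\le n$ components each forced (by containing a zero and by a local barrier) to have area $\gtrsim 1/(n\log n)$ gives total $\gtrsim 1/\log n$ only if components don't merge — the honest statement is that the dominant component is large. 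I would therefore structure the final proof as: (1) reduce to $\Lambda_p\cap\D$; (2) show $\Lambda_p\cap\D$ contains a disc of radius comparable to its inradius $\rho$ with $\rho \gtrsim 1/(n\sqrt{\log n})$ via the inradius bound advertised in the abstract (itself proven from a Pommerenke-type estimate sharpened using the $\log n$ savings); (3) but for the \emph{area} bound use the complementary global estimate: integrate $\log|p|$ over $\D$, get $O(n)$, and argue that to keep $\int_{\D}\log|p| = O(n)$ with $|p|\ge 1$ off $E$ while $|p|$ dips to $0$ at $n$ points inside $E$, the set $E$ must carry negative log-mass $\gtrsim$ something forcing $m(E)\gtrsim 1/\log n$ by the extremal property of the disc for logarithmic potentials together with the bound $\sum_{j}\log^+\frac{1}{\mathrm{dist}(z_j,\partial\D)}$-control. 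The main obstacle, which I expect to require a genuinely new idea beyond these classical ingredients, is closing the gap between the ``soft'' bound $n^{-O(1)}$ that potential theory gives immediately and the sharp $1/\log n$: this presumably comes from a careful second-moment or variance argument on the distribution of $\log|p|$ over the disc, showing it cannot be too concentrated, which is the technical core of the paper.
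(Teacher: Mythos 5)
You identify the right difficulty — that ``soft'' potential theory (Green energies, Jensen, area–capacity) only gives power-law decay, which is Pommerenke's regime — and you are honest that you cannot close the gap to $1/\log n$. The idea you are missing is precisely the engine the paper uses: the Nazarov–Polterovich–Sodin theorem (Theorem~\ref{NPS2} in the paper), which says that for a nonzero harmonic $h$ on $\D$ vanishing at the origin, the area of $\{h<0\}\cap\D$ is bounded below by $c/\log\beta^*$, where $\beta^*$ is a doubling exponent, or equivalently $c/\log\nu$ where $\nu$ is the number of sign changes of $h$ on $\T$. For $p\in\mathcal P_n(\T)$ one takes $h=\log|p|$ (harmonic in $\D$, vanishing at $0$ since $|p(0)|=1$), and since $h$ has at most $2n$ sign changes on any circle (or equivalently the doubling exponent is $O(n)$ using $|a_0|=|a_n|=1$), one gets $m(\Lambda_p\cap\D)\gtrsim 1/\log n$ in one step. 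None of the mechanisms you propose — decomposing $u=v-G\mu$, bounding $\int_\D G\mu\,dm$, Jensen over area measure, rearrangement against the disc — produce a doubly-integrated ``entropy'' gain; they all collapse, as you say, to $n^{-O(1)}$. The NPS theorem is a genuinely different and sharper input (a quantitative form of the fact that a harmonic function with controlled doubling can't be of one sign too often), and the $1/\log n$ rate comes out of a clean plug-in, not from tightening classical estimates.

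\textbf{On passing from $\mathcal P_n(\T)$ to $\mathcal P_n(\overline\D)$.} You gesture at ``it suffices to bound the portion inside $\D$'' but the actual reduction is nontrivial and is the second key contribution you do not supply: a \emph{zero-pushing lemma} (Lemmas~\ref{pushzeros} and \ref{lem:probabilistic}), which replaces each zero $w$ deep inside $\D$ by roughly $(\log n)^4$ points on $\T$ whose averaged log-potential dominates $\log|z-w|$ on $(1-\eps)\D$. This is a one-sided, interior ``balayage''-like comparison (and is not the classical sweeping). Without it, $\log|p|$ is not harmonic in $\D$ and NPS cannot be applied; your proposal has no substitute for this step.

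\textbf{On the upper bound.} Your sketch (``zeros at roots of unity scaled to radii $1-2^{-k}$'') does not describe Wagner's construction (whose zeros lie on $\T$) nor the paper's. The paper's construction is structurally different: it imports from NPS an entire function $E$ with $E(x)\sim e^{e^x}$ and controlled Taylor coefficients, truncates $Q_R(z)=E(R+z)-E(R)$ at $N\approx e^{2cR}$ terms, turns its real part into a density on $\T$, discretizes the resulting measure, and reads off a polynomial whose $(\log M)^\alpha$-lemniscate has area $O(1/\log\log M)$. The $\delta$ in Wagner's $(\log\log n)^{-1/2+\delta}$ is removed by this explicit ``top-down'' construction, and the $1/2$-power is removed by directly estimating the lemniscate inside \emph{and} outside $\D$ from the explicit form of the polynomial, rather than invoking the EHP reflection estimate. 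Calling this ``delicate bookkeeping'' on Wagner's example substantially mischaracterizes what is new.

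In short, the proposal is incomplete: both the crucial quantitative input (the NPS doubling/sign-change area bound) and the structural device that makes it applicable (the zero-pushing lemma) are absent, and you acknowledge as much. The classical ingredients you list cannot, by themselves, produce a bound better than $n^{-O(1)}$.
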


This result will follow from a more refined result (see Theorem \ref{thm:arealevel1}) that relates this problem to the case when all zeros are confined to the unit circle.

We also consider the minimal area for other levels.  For sublevel sets $|p(z)| < 1 + \e$ with $\e>0$ arbitrary, we determine the sharp order of decay, showing that the minimal area is bounded above and below by constants times $(\log \log n)^{-1}$, see Theorem \ref{thm:arealevelmorethan1}.  For sublevel sets $|p(z)| < 1- \e$ the minimal area is much smaller in light of the order $1/n$ area of the so-called Erd\"os lemniscate (corresponding to the polynomial $p(z) = z^n-1$).  We show that the minimal area in this regime is also bounded below by power law decay, see Theorem \ref{thm:arealevellessthan1}.


More generally, another problem posed in \cite{EHP} concerns the case when the zeros are contained in a set of (logarithmic) capacity at least unity.  The authors of \cite{EHP} asked whether it still holds that the minimal area of the lemniscate $|p(z)| < 1$ approaches zero as the degree $n \rightarrow \infty$.  We used a probabilistic method in \cite{KLR} to give an affirmative answer when the capacity is strictly larger than one.  In the current paper we address the case of unit capacity, giving an affirmative answer under a smoothness assumption.

Another problem posed in \cite{EHP} concerns the minimal inradius of $\Lambda_p$ (under the original constraint that the zeros of $p$ are in the closed unit disc).  The case $p(z)=z^n-1$ that produces the so-called Erd\"os lemniscate was conjectured to be extremal for this problem, i.e., the minimal inradius is conjectured to be asymptotically a constant multiple of $1/n$.  Pommerenke proved that the inradius is at least a constant multiple of $1/n^2$.  We improve this to $1/ (n \sqrt{\log n})$ which supports the conjecture of Erd\"os, Herzog, and Piranian (with only the loss of the logarithmic factor), see Theorem \ref{thm:inradius} below.

A problem relating the inradius to the area was posed by Cuenya and Levis \cite{Cuenya} in 2005.  Namely, they conjectured the existence of a constant $C(n)$ depending only on $n$ such that the inradius $\rho(\Lambda_p)$ satisfies the estimate  $\rho(\Lambda_p) \geq C(n) \sqrt{m(\Lambda_p)}$ for all polynomials $p$ of degree $n$.  This was confirmed by Solynin and Williams \cite{Solynin}; while their method of proof did not provide information on how $C(n)$ depends on $n$, they conjectured that the sharp value of $C(n)$ is inversely proportional to $n$.  We confirm this by proving a quantitative version of the estimate with an explicit value of $C(n)$ in the form of a constant divided by $n$, see  Lemma~\ref{lem:inradius} below.  This estimate is sharp up to the constant factor, as shown by the Erd\"os lemniscate.



In addition to our estimates on the minimal area, we investigate the structure of the sequence of minimizing polynomials, showing that their zeros approximately equidistribute on the unit circle (which holds at all levels), and we also show normality of the sequence of minimizers for levels $0<t<1$ and non-normality of the sequence for levels $t>1$ (with the case $t=1$ being an open problem--we note that establishing non-normality for this case would determine the sharp decay rate for the minimal area to be $(\log \log n)^{-1}$ as it is in the case $t>1$).

The case $p(z) = z^n-1$ with zeros at roots of unity that was mentioned above as the conjectured extremal for the minimal inradius has appeared as the conjectured solution to other extremal problems including the problem of determining the maximal length of the (unfilled) lemniscate $\{ |p(z)|=1\}$ of a monic polynomial.  While the maximal length problem is still open, Fryntov and Nazarov \cite{FN} showed that the Erd\"os lemniscate $\{|z^n-1| = 1 \}$ is indeed locally extremal, and as $n \rightarrow \infty$ the maximal length is $2 \pi n + o(n)$, i.e., asymptotic to the length of the Erd\"os lemniscate.  On the other hand, the (filled) Erd\"os lemniscate is not extremal for the minimal area problem (at level $t=1$), since it has area bounded below by a constant independent of $n$.  The extremal polynomial must deviate in a subtle way from having zeros at roots of unity considering that the zeros do equidistribute ``in the bulk''.  We present numerical evidence (see Section \ref{sec:numerics}) that, at least for small $n$, the zeros of the extremal polynomial are located at a subset of the $2n$th roots of unity with some zero(s) of multiplicity.  It is an intriguing open problem to verify this and determine the precise  configuration of zeros 
for all $n$.

The study of lemniscates (level sets and sublevel sets of complex polynomials and rational functions) has a long and rich history with a wide variety of applications including approximation theory \cite{Totik}, \cite{Bishop2025}, holomorphic dynamics \cite[p. 159]{Milnor}, inverse potential problems \cite{Strakhov}, topology of real algebraic curves \cite{Catanese}, \cite{BauerCatanese}, numerical analysis \cite{Tref}, computer vision \cite{EKS}, \cite{Younsi}, \cite{RichardsYounsi}, \cite{Sharon2006}, moving boundary problems of fluid dynamics \cite{KhavLund}, \cite{LundTotik}, \cite{KMPT}, critical sets of planar harmonic mappings \cite{KhavLeeSaez}, \cite{LLtrunc}, which includes critical sets of lensing maps arising in the theory of gravitational lensing \cite[Sec. 15.2.2]{Petters}, and lemniscates have also appeared prominently in the theory and application of conformal mapping \cite{Bell}, \cite{JT}, \cite{GPSS}.
See also the survey \cite{Richards} which elaborates on some of these lines of research.

We also mention that there have been several recent studies on the geometry of random lemniscates \cite{LLlemni}, \cite{LR}, \cite{EHL},  \cite{KaWi}, \cite{KLR}, \cite{KoushikSubhajit}.  A primary source of motivation for those studies is to provide a probabilistic counterpart to the classical extremal problems on lemniscates (e.g., to compare the ``typical'' case to the extremal case), however, as mentioned above, such probabilistic studies have also lead to novel progress on classical deterministic problems by way of indirect probabilistic proofs, see \cite{KLR} and \cite{KoushikSubhajit}.  In the current paper, we utilize such a probabilistic approach to prove one of the key lemmas, see Lemma \ref{lem:probabilistic}.





\para{Notation} We write $C,c$ to mean positive finite constants. They are pure numbers unless it is mentioned that they depend only on such and such parameter(s). We write $a_n\lesssim b_n$ or $b_n\gtrsim a_n$ to mean that $a_n\le C b_n$ for some constant $C$ and large enough $n$. If $a_n\lesssim b_n$ and $b_n\lesssim a_n$, we write $a_n\asymp b_n$.

\section{Results}
For compact $K\subseteq \C$, let $\mathcal{P}_n(K)$ denote the set of monic complex polynomials of degree $n$, having all its zeros in $K$. For a polynomial $p,$ and $t>0$, let $\Lambda_p(t)=\{z\in \C\; : \; |p(z)|\le t\}$ denote the $t$-level lemniscate of $p$. We simply write  $\Lambda_p$ for $\Lambda_p(1)$. For a sequence of polynomials $\{p_n\}$, we write $\Lambda_n$ for $\Lambda_{p_n}$ for ease of notation . For $t>0,$ define
\begin{align*}
    \kappa_n(K,t)=\inf\{m(\Lambda_p(t))\ : \ p\in \mathcal P_n(K)\}
\end{align*}
where $m(\cdot)$ denotes the Lebesgue measure on the complex plane. In what follows, for a polynomial $p$ with multi-set of zeros $Z$ (with each zero  repeated as many times as its  multiplicity), we define the empirical measure of zeros by
\[\mu_p= \frac{1}{\deg(p)}\sum_{a\in Z}\delta_{a}.\] 

\noindent Throughout this paper, we use tools from logarithmic potential theory in the plane. For $K\subset\C$ a non-empty compact set, and $\mu$ a Borel probability measure on $K$, the logarithmic potential of $\mu$ is the function $U_{\mu}:\C\rightarrow [-\infty, \infty)$ defined by 

\[U_{\mu}(z) = \int_{K}\log|z-w|d\mu(w), \hspace{0.05in}z\in\C.\] 

The term capacity in this paper will always refer to logarithmic capacity. For a comprehensive account of potential theory in the plane and its application to complex analysis, we refer the reader to the works \cite{ransford} and \cite{SaffTotikLogpotential}.

\bthm\label{thm:arealevel1} There exist $0<c<C<\infty$ such that for all large enough $n$,
\begin{align*}
\frac{c}{\log n}\le \frac13\kappa_{n(\log n)^4}(\T,1) \le\kappa_n(\overline{\D},1)\le \kappa_n(\T,1) \le \frac{C}{\log \log n}.    
\end{align*}
\ethm

\bthm\label{thm:arealevelmorethan1} For  $t>1$, there exist $0<c<C<\infty$ (depending only on $t$) such that for all large enough $n$,
\begin{align*}
\frac{c}{\log \log n}\le \frac13 \kappa_{n(\log \log n)^4}(\T,t) \le \kappa_n(\overline{\D},t)\le \kappa_n(\T,t) \le \frac{C}{\log \log n}.  
\end{align*}
\ethm

\bthm\label{thm:arealevellessthan1} For  $t\in (0,1)$, there exist $0<c<C<\infty$ (depending only on $t$) such that for all $n\ge 1$,
\begin{align*}
\frac{c}{n^4}\le \kappa_n(\overline{\D},t)\le \kappa_n(\T,t) \le \frac{C}{n}.  
\end{align*}
On the other hand, $\kappa_n(\T,t)\ge \frac{c}{n^2\log n}$.
\ethm

\begin{remark}
It seems likely that for each $t>0$ we have $\kappa_n(\overline{\D},t)=\kappa_n(\T,t)$. While we have not been able to show this, the comparisons in Theorems \ref{thm:arealevel1} and \ref{thm:arealevelmorethan1} provide some evidence.
\end{remark}

\begin{remark}
The proofs of Theorems \ref{thm:arealevellessthan1} and \ref{thm:arealevelmorethan1} also provide information for $t$ close to $1$ (considering separately the two cases $t \rightarrow 1-$ and $t \rightarrow 1+$).
Namely with $\e_n = \exp(-(\log n)^M)$, for $t=1-\e_n$, we have the upper bound $\kappa_n(\overline{\D},t) \le \kappa_n(\T,t) \lesssim (\log n)^M/n$, while for $t = 1 + \e_n$, we have the lower bound $\kappa_n(\T,t) \ge \kappa_n(\overline{\D},t) \gtrsim 1/\log \log n$.
\end{remark}

 It is natural to ask what happens if we replace the closed disc by another compact  $K$. If $K$ has capacity less than $1$,  Erd\"{o}s, Herzog, and Piranian~\cite{EHP}  showed that $\kappa_n(K,1)$ is bounded below by a positive constant (that may depend on $K$). Further, they asked whether $\kappa_n(K,1)\to 0$ when $K$ has capacity greater than or equal to $1$.
 If $K$ has capacity strictly larger than $1$ and is  sufficiently smooth, it was shown in \cite{KLR} that $\kappa_n(K,1)\le e^{-cn}$ for some $c>0$  that may depend on $K$. Now we answer the case of compact sets with unit capacity.

 \bthm\label{thm:areageneralcapacity1set}
Let $K$ be the closure of a bounded open set having $C^2$-smooth boundary. Assume that $K$ has capacity $1$. Then, $\inf\limits_{n}\kappa_n(K,1)= 0$.
 \ethm

What does the exact configuration of zeros look like for the area minimizing polynomials?  This is an interesting open question.  It is worth remarking that for $t \geq 1$ even in the case of $\overline{\D}$, the natural candidate of $n$-th roots of unity are not minimizers. In fact the corresponding lemniscate areas are bounded away from $0$. However, the upper bounds in Theorems $1-3$ and the following result show that asymptotically the zeros approach the equilibrium measure, hence they approximately equidistribute in the case $K=\overline{\D}$.

\bthm\label{zero distribution}
Let $K\subset\C$ be a compact 
 set with capacity $1$. Let $t>0$ be fixed. Suppose $p_n\in\mathcal{P}_n(K)$ is a sequence such that $m(\Lambda_{p_n}(t)\cap K)\to 0$ as $n\to\infty$. Let $\mu_n$ be the empirical measure of the zeros of $p_n$. Then
\begin{equation}
\mu_n\overset{w}{\to}\nu_{K}, \end{equation}
where $\nu_{K}$ is the equilibrium measure of $K$.
\ethm

Apart from the area of lemniscates, other metric quantities such as the inradius are also of interest. The inradius $\rho(\Omega)$ of an open set $\Omega\subset\C$ is the radius of the largest disc that is completely contained in $\Omega.$ In \cite{EHP} it was asked whether the minimal inradius $\rho_n = \inf\{\rho(\Lambda_p): p\in\mathcal{P}_n(\overline{\D})\}$, satisfies $\rho_n\geq\frac{c}{n}$ for some $c>0.$ Improving upon the bound $\rho_n \geq\dfrac{c}{n^2}$ obtained by Pommerenke, we prove the following result.

\bthm\label{thm:inradius}
The minimal inradius $\rho_n = \inf\{\rho(\Lambda_p): p\in\mathcal{P}_n(\overline{\D})\}$ satisfies
\begin{equation}\label{inradLB}
\rho_n\geq\dfrac{c}{n\sqrt{\log n}}. 
\end{equation}
\ethm

This result follows from Theorem \ref{thm:arealevel1} combined with the following lemma that incidentally confirms a conjecture of Solynin and Williams \cite{Solynin} as mentioned in the introduction.

\begin{lemma}\label{lem:inradius}
Let $t >0.$ Let $p$ be a degree-$n$ polynomial $p$ (not necessarily monic). Then the inradius $\rho(\Lambda_p(t))$ of its associated lemniscate satisfies
\begin{equation}\label{eq:inradArea}
\rho(\Lambda_p(t))\geq \frac{1}{72\pi\sqrt{\pi}} \frac{ \sqrt{m(\Lambda_p(t))}}{n}.
\end{equation}
\end{lemma}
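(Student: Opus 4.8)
The plan is to bound the inradius from below by localizing the polynomial near a point of $\Lambda_p(t)$ and comparing with the area. Fix a point $z_0 \in \Lambda_p(t)$; by a translation we may assume $z_0 = 0$, so $|p(0)| \le t$. The key idea is that on a small disc $D(0,r)$ the function $|p|$ cannot grow too fast: writing $p(z) = p(0) + \sum_{k=1}^n a_k z^k$ and controlling the Taylor coefficients via Cauchy estimates on a slightly larger disc is one route, but the cleaner approach is to use the factorization $p(z) = c_n \prod_{j=1}^n (z - \zeta_j)$ and the elementary observation that $\log|p(z)| - \log|p(0)|$ is a sum of $n$ terms each of the form $\log|z - \zeta_j| - \log|\zeta_j|$, each of which is subharmonic-type controlled. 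First I would establish the pointwise growth bound: there is an absolute constant such that if $|p(0)| \le t$ then for $z$ in a disc of radius $\rho$ around $0$ one has $|p(z)| \le t$ on a definite fraction of that disc, \emph{provided} $\rho$ is smaller than a suitable multiple of $\sqrt{m(\Lambda_p(t))}/n$.

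The mechanism for the comparison with area is the following: the set $\{z : |p(z)| < t\}$ has $n$ connected components at most (counting with the structure of sublevel sets of degree-$n$ polynomials), and more usefully, the sublevel set near $0$ contains a component whose area is part of $m(\Lambda_p(t))$. The step I would carry out is a \emph{mean-value / area-radius} estimate: on the circle $|z| = s$, the average of $\log|p(z)|$ is $\log|c_n| + \sum_j \log \max(s, |\zeta_j|) \ge \log|p(0)|$, so $\log|p|$ cannot be large on \emph{most} of each small circle. Quantitatively, for $s \le \rho$, the proportion of the circle $|z|=s$ on which $|p(z)| \ge t e^{2n}$ (say) is controlled, and integrating in $s$ shows that the portion of $D(0,\rho)$ lying in the $te^{2n}$-sublevel set is a definite fraction of $\pi \rho^2$. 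But $\Lambda_p(te^{2n})$ is contained in a disc of radius comparable to $(te^{2n})^{1/n}\cdot(\text{something})$ times the diameter of the zero set — this is where one invokes that $m(\Lambda_p(t))$ and $m(\Lambda_p(te^{2n}))$ differ only by a bounded factor depending on $n$ in a controlled way, or rather one works directly: scale so that $m(\Lambda_p(t)) = \pi R^2$ and show $\Lambda_p(t)$ contains a disc of radius $\asymp R/n$ by contradiction — if every disc of radius $R/(Kn)$ inside $\Lambda_p(t)$ failed to exist, then around each point of $\Lambda_p(t)$ the set would be "thin", and a covering argument combined with the circle-average bound would force $m(\Lambda_p(t)) < \pi R^2$.

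Concretely, here is the cleanest line I would pursue. Suppose $\rho(\Lambda_p(t)) = \rho$ and let $\Lambda := \Lambda_p(t)$. Since $\Lambda$ is open and nonempty with $m(\Lambda) =: A$, there is a point $w$ with $D(w, \rho)$ maximal inscribed; but I want the reverse bound, so instead take any component $\Omega$ of $\Lambda$ and any $w \in \Omega$. The function $q(z) = p(w + z)/t$ satisfies $|q(0)| \le 1$, is a monic-up-to-constant polynomial of degree $n$, and $\{|q| < 1\} = (\Lambda - w)$. Now on the circle $|z| = s$ the logarithmic average gives $\frac{1}{2\pi}\int_0^{2\pi} \log|q(se^{i\theta})|\,d\theta \ge \log|q(0)|$ only after accounting for zeros inside $D(0,s)$; regardless, $\frac{1}{2\pi}\int \log^+|q| \,d\theta \le \frac{1}{2\pi}\int \log^+|q|\,d\theta$ on $|z|=s$ is bounded by $n \log^+(s/\delta_{\min}) + O(n)$ where quantities are crude. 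The upshot I expect: for $s$ up to $c_0 \sqrt{A}/n$, at least half the circle $|z|=s$ lies in $\{|q| < e\}$, hence $m(\{|q|<e\} ) \gtrsim A/n^2$; combined with $m(\{|q|<e\}) \le C \cdot m(\{|q|<1\}) = C A$ (comparing $t$-levels and $et$-levels loses only a constant by a normal-families / scaling argument on the zero configuration), this is consistent but not yet a contradiction — so one must instead directly produce the inscribed disc.

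\textbf{The main obstacle} is precisely this last passage: extracting a genuine round disc inside $\Lambda_p(t)$ rather than merely a set of large area. I would overcome it by a \emph{Chebyshev/bad-set} argument: let $E_s = \{\theta : |q(se^{i\theta})| \ge 1\}$ be the "bad angles" at radius $s$; the circle-average bound forces $\int_0^{\rho_0} |E_s| s\,ds$ (the area of the bad set in $D(0,\rho_0)$) to be small, of order $\rho_0^2 \cdot (\text{const}/\log(1/\rho_0 \text{-ish}))$ — no, more robustly, $|E_s| \le$ const for all $s$ is false; what is true is $\int_0^R |E_s|\, s\, ds = m(\Lambda^c \cap D(0,R))$. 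So choose $R = \kappa \sqrt{A}$ with $\kappa$ small enough that $\pi R^2 > 2A \ge 2 m(\Lambda \cap D(0,R))$, giving $m(\Lambda^c \cap D(0,R)) > \pi R^2/2$; but $\Lambda^c = \{|p|\ge t\}$ and by the maximum principle / Cartan-type lemma $\Lambda^c \cap D(0,R)$, being where a degree-$n$ polynomial is large, has its complement... I would finish by invoking Cartan's lemma (cited in the introduction) or a direct estimate: $\{|p(z)| \ge t\}$ near a zero of $\{|p|<t\}$ cannot have density close to $1$ on small scales unless $n$ is large, and dividing by $n$ gives exactly the $1/n$ in the statement, with the $1/(72\pi\sqrt\pi)$ emerging from tracking the constants through Cartan's estimate (the $2e$ there) and the area-to-radius conversions. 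The logarithmic subtleties all wash out because we only claim a bound with constant $1/n$, not a sharp asymptotic.
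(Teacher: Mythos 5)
Your proposal does not close; it is a catalogue of attempted routes, none of which reaches the conclusion, and you candidly flag the central gap yourself: ``extracting a genuine round disc inside $\Lambda_p(t)$ rather than merely a set of large area.'' The circle-average/Jensen estimate you set up does give information about the \emph{measure} of the sublevel set on small circles around a point of $\Lambda_p(t)$, but a lower bound on $m(\Lambda_p(t)\cap D(0,s))$ at various scales $s$ says nothing, by itself, about the existence of an inscribed disc of any particular radius: the set could be a union of thin slits of arbitrarily small width. Your proposed fix via Cartan's lemma is also not workable as stated: Cartan gives an \emph{upper} bound on the size of the exceptional set where a polynomial is small, which pushes in the wrong direction when you want to guarantee a definite round piece inside $\{|p|<t\}$. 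The last paragraph, where you take $R = \kappa\sqrt{A}$ with $\pi R^2 > 2A$, works at the scale $\sqrt{A}$, which is a factor of $n$ \emph{larger} than the inradius scale $\sqrt{A}/n$ you need to produce; nothing in that step localizes to the correct scale.

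The paper's proof is structured quite differently and avoids your obstacle entirely by introducing the perimeter $L(\Lambda)$ as an intermediary. It proves two clean lemmas and multiplies them. First, a covering-type isoperimetric inequality valid for any bounded simply connected domain with rectifiable boundary: $A \le 18\pi\,\rho\,L$, proved by covering $\Omega$ by discs $B(z,2r_z)$ with $r_z=\dist(z,\partial\Omega)$, extracting a disjoint Vitali subfamily, and observing that each disjoint annulus $T(z_i,r_i,2r_i)$ must contain at least $2r_i$ worth of boundary length. Second, a length-to-area estimate specific to lemniscates, $L(\Lambda) \le 4n\sqrt{\pi}\sqrt{A(\Lambda)}$, which comes from the Fryntov--Nazarov formula $L(\Lambda) \le \sum_{\xi}\int_{\Lambda}|z-\xi|^{-1}\,dA(z)$ (sum over the $2n-1$ zeros of $pp'$) followed by Schwarz symmetrization. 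Composing gives $\rho \ge \frac{1}{18\pi}\frac{A}{L} \ge \frac{1}{72\pi\sqrt{\pi}}\frac{\sqrt{A}}{n}$. Both ingredients are absent from your proposal; in particular, the idea of routing through perimeter is precisely what lets one convert a measure-theoretic bound into a genuine inradius bound, and is the missing step you were looking for.
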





\bigskip

{\color{blue}
}

\section{Overview of main ideas in the proofs}\label{sec:overview}

\noindent In this section we give an overview of the ideas used in some of the proofs of our Theorems. We refer the reader to Section $2$ for the notations. 

\vspace{0.1in}

\subsection{Overview of Lower bounds}
The main tool used to obtain lower bounds on the area of lemnsicates is the following result of Nazarov, Polterovitch and Sodin, see \cite{NPS}. 

\bthm [\cite{NPS}]\label{NPS2}
Let $h: \D\rightarrow \R$ be a non zero harmonic function which is continuous up to the boundary $\T$, satisfying $h(0) = 0.$ Then, there exists a constant $ c_{\mb{\tiny NPS}}>0$ such that

\[m(\{h < 0\}\cap\D) \geq \frac{c_{\mb{\tiny NPS}}}{\log \beta^{*}(\D, h)}\asymp \frac{1}{\log \nu(\T,h)}.\]
Here $\beta^{*}(\D, h) = \max(\beta(\D, h), 3),$ where $\beta(\D, h) = \log\left(\frac{\sup_{\D} |h|}{\sup_{\frac{1}{2}\D}|h|} \right)$ is the so called doubling exponent, and $\nu(\T,h)$ is the number of sign changes of $h$ on $\T$. 
\ethm  

To keep the exposition concise, we will mainly focus on explaining the lower bounds in Theorem \ref{thm:arealevel1}, i.e., at the critical level $t=1$. Let us first start with the case when our polynomial $p\in\mathcal{P}_n(\T)$, i.e. all the zeros of $p$ are on $\T.$ Then $u(z) = \log\vert p(z)\vert$ is harmonic in $\D$, $u(0) = 0$, and importantly for us, $\{\vert p\vert < 1\} = \{u < 0\}$. We apply Theorem \ref{NPS2} to (a rescaling of) $u$, along with the simple fact that $\nu(\T, u)$ is at most $2n$ (see Remark \ref{NPSpolynom}) 
 which yields the lower bound $m(\Lambda_p)\gtrsim\frac{1}{\log n}$ for $p\in\mathcal{P}_n(\T)$. Instead of sign-changes, one can also  estimate the doubling exponent to arrive at the same bound.

 Armed with this, the natural question that arises is the following: Do the lemniscate area minimizers $p_n$ that attain $\kappa_n$, have \emph{all} their zeros on the unit circle? This is an interesting question whose answer seems to be true but we were unable to prove it. Equidistribution, c.f., Theorem \ref{zero distribution}, says that most zeros of $p_n$ are near $\T$ but this still does not rule out $p_n$ having $o(n)$ zeros deep inside $\D$ (say in the half disc). In this case Theorem \ref{NPS2} is not applicable directly since $\log\vert p_n\vert$ is not harmonic in $\D$. To get the comparison of constraints $\frac13\kappa_{n(\log n)^4}(\T,1) \le\kappa_n(\overline{\D},1)$ stated in Theorem \ref{thm:arealevel1}, we prove a \emph{zero-pushing lemma}, see Lemma \ref{pushzeros} (or its alternative version Lemma \ref{lem:probabilistic} proved using a probabilistic approach), which is an important part of our technical contributions in this paper. This lemma says the following: Given a polynomial $p\in\mathcal{P}_n(\overline{\D})$ with at least one zero in the open disc $\D,$ there exists another polynomial $q\in\mathcal{P}_{n(\log n)^4}(\T)$ such that 
\begin{equation}\label{zeropushingheuristic}
U_{\mu_p}(z)=
\frac{1}{n}\log\vert p(z)\vert\leq\frac{1}{n(\log n)^4}\log\vert q(z)\vert
= U_{\mu_q}(z)\hspace{0.1in}\text{for all $z$ in $r\D$}
\end{equation}
where $r=r_n\approx 1$. This is done by showing that for each zero $w$ of $p$ lying ``deep" inside $\D$, we can choose approximately $L=(\log n)^4$ different points $z_1, z_2,..., z_L$ on $\T$ so that \[\log\vert z-w\vert\leq \frac{1}{L}\sum_{j=1}^{L}\log\vert z-z_j\vert\]
holds for all  $z\in\D,$ except in a small set near $\T$. The polynomial $q$ is then built with the different $z_j$ as the zeros (the ``superficial'' zeros near $\T$ can simply be pushed radially to $\T$). It may be helpful to view this replacement procedure as a sort of ``balayage'' of potentials, but we note some essential differences with the standard balayage: besides the need to use a discrete measure, the comparison of the corresponding potentials is within the domain, rather than outside as in the usual balayage process, and moreover, in this situation we can only get a one-sided comparison of potentials (fortunately in the direction we need) as opposed to uniform approximation.  Going back to the proof, the estimate \eqref{zeropushingheuristic} provides the desired comparison of just the portions of $\{\vert q\vert < 1\}$ and $\{\vert p\vert < 1\}$ lying inside the unit disc. We deal with the portion outside the unit disc using a reflection idea from \cite{EHP} while utilizing our equidistribution result to get an improved distortion estimate.

\vspace{0.1in}

\noindent We will be brief about lower bounds for other levels. For $t>1$, the zero-pushing lemma shows that it is sufficient to obtain a lower bound for $\kappa_n(\T, t)$. This in turn is based on estimating the doubling index $\beta(\D , \log\vert p_n\vert)$ for a minimizer $p_n,$ where the crucial point is that the denominator in the expression for $\beta$ is bounded well away from zero (this is closely related to the non-normality of the family of minimizers discussed below), leading to a logarithmic doubling exponent (and hence doubly-logarithmic lower bound on the area). The lower bound for $\kappa_n(\T, t)$ for $t\in (0,1)$ is obtained by estimating the number of sign changes of $\log\vert p\vert$ in balls of small radius, along with a lower estimate for the arc length of $\Lambda(t)\cap\T$ due to Wagner, and certain covering arguments. We refer the reader to Section \ref{alllowerbounds} for detailed proofs. 

\begin{remark}[Zero-pushing with random replacement] We state two slightly different versions of the zero-pushing lemma (see Lemmas \ref{pushzeros} and \ref{lem:probabilistic}).  Either version serves our purpose, but the proof of Lemma \ref{lem:probabilistic} is probabilistic, whereas the proof of Lemma \ref{pushzeros} is constructive, see Remark \ref{rmk:merits} for a discussion of their relative merits.  Here we merely note, from a broad perspective, that the random replacement strategy demonstrates how the ``probabilistic lens'' can be utilized in extremal problems for obtaining a lower bound on an infimum (or upper bound on a supremum) by replacing the original constraint with a more convenient one.
This is done by starting with the unknown extremal and building a random replacement that satisfies the new constraint and compares favorably to the extremal with high probability.
\end{remark}

\subsection{Overview of upper bounds}
As mentioned in the introduction, the best known upper bound for the area of the level-1 lemniscate is due to Wagner~\cite{wagner}, who showed that 
\[
\kappa_n(\D,1)\le \kappa_n(\T,1)\lesssim \frac{1}{(\log \log n)^{\frac12-\delta}}
\]
for every $\delta>0$. Wagner did this by constructing polynomials $p_n$ with zeros on the unit circle, such that $m(\Lambda_{p_n}\cap \D)\lesssim (\log \log n)^{-1+\delta}$. The $1/2$ power appears in considering the full lemniscate, where Wagner invoked an estimate of Erd\"{o}s-Herzog-Piranian comparing the portion outside the unit disc to the portion inside, namely, for any polynomial $p$ with zeros on the unit circle, $m(\Lambda_p)\lesssim \sqrt{m(\Lambda_p\cap \D)}$. This inequality comes from observing that if $re^{i\theta}\in \Lambda_p$ for some $r>1$, then $(2-r)e^{i\theta}\in \Lambda_p$ (every zero of $p$ is closer to the latter point). 
If one applies this reflection idea (rather than the result) together with the geometric observation that in  Wagner's example, $\Lambda_{p_n}\cap \D$ is contained in the union of a small strip around the real axis and a small annulus around the unit circle, then the upper bound immediately improves to $m(\Lambda_p)\lesssim (\log \log n)^{-1+\delta}$.

In addition to removing the square root, our improvement in Theorem~\ref{thm:arealevel1} removes the $\delta$ to get the $(\log \log n)^{-1}$ bound. Secondly, we prove the same upper bound for higher levels. Moreover, we simplify parts of Wagner's proof. We do follow the architecture of Wagner's proof, which is to first construct a finite measure on $\T$ whose potential is positive on most of the disc (but not everywhere, it has to vanish at the origin), and then discretize the measure to get a  potential of the form $\log |p|$ for a polynomial $p$. But in the construction of the measure, we adapt techniques from \cite{NPS} involving rescaling and truncation of a certain entire function. 
This ``top-down'' approach streamlines the construction and ultimately yields a more explicit description of the resulting polynomial that can be estimated directly both inside and outside the unit disc (without need for the reflection idea mentioned above) while also removing the $\delta$ appearing in Wagner's estimate.

The proof of Theorem \ref{thm:areageneralcapacity1set} for unit capacity sets follows this outline, while replacing certain explicit steps with functional analytic arguments.  In the process of adapting the proof to the general setting, we lose quantitative control over the relationship between the area and the degree.

\subsection{Normality of area minimizers}
Fix $t>0$. We will refer to a polynomial $p_{n, t}\in\mathcal{P}_n(\overline{\D})$ which attains the minimum area $\kappa_n(\overline{\D}, t),$ as a $t$-level area minimizer. By Theorems $1-3$, we know that $m(\{\vert p_{n,t}\vert < t\})\to 0$ as $n\to\infty$. Let $\mathcal{F}(t)$ denote the collection of $t$-level minimizers. For which $t>0$ does the family $\mathcal{F}(t)$ form a normal family of holomorphic functions in some neighborhood of the origin? This is not only a question of general interest, but as explained below, sheds important information on the area problem of lemniscates.

\vspace{0.1in}

\noindent It is easily seen that if $t\in (1, \infty),$ then $\mathcal{F}(t)$ is not normal in any neighborhood of the origin. Indeed, if normality holds in some $r\D$, then along a subsequence, we will have $p_{n,t}\to f$, uniformly on compacts of $r\D$, for some holomorphic $f$. Since $\vert p_{n,t}(0)\vert\leq 1,$ we have $\vert f(0)\vert\leq 1$. On the other hand, from $m(\{\vert p_{n,t}\vert < t\})\to 0,$ we conclude that $|f|\geq t > 1$ everywhere in $r\D$. This contradicts $\vert f(0)\vert\leq 1$. In fact the same argument shows that every subsequence $\{p_{n_j, t}\}$ is also not normal in every neighborhood of the origin. As a consequence, $\lim_{n\to\infty}\vert\vert p_{n,t}\vert\vert_{0.49\D}=\infty$, and hence in particular 
\begin{equation}\label{largehalfdiscnorm}
\vert\vert p_{n,t}\vert\vert_{0.49\D}\geq 100    
\end{equation} 
for large enough $n$. By the \emph{zero-pushing lemma} we may assume that $p_{n,t}$ has all zeros on $\T$. Then $\log\vert p_{n,t}\vert$ is harmonic in $\D$. We can now use the trivial upper bound $\vert\vert p_{n,t}\vert\vert_{\T}\leq 2^n$ along with the estimate \eqref{largehalfdiscnorm} to bound the doubling exponent $\beta$ in Theorem \ref{NPS2} and obtain the area lower bound of order $\frac{1}{\log\log n}$.

\vspace{0.1in}

\noindent In contrast we prove that for each fixed $t\in(0, 1)$,  $\mathcal{F}(t)$ forms a normal family in $\D$. This is a consequence of the geometry of the corresponding lemniscates: Using Theorem \ref{NPS2}, we can show that $\Lambda_{p_{n, t}}(t)$ is contained in an annulus near $\T$ of vanishing thickness. In particular, on compact subsets of $\D$, we have $|p_{n,t}(z)|\geq t$ and normality follows.

\vspace{0.1in}

Finally, we discuss the normality of the critical case $t=1$. When it comes to the area, since $\kappa_n(\overline{\D}, 1)$ and $\kappa_{n(\log n)^4}(\T, 1)$ are comparable by Theorem \ref{thm:arealevel1}, it suffices to only consider the normality of the subclass $\mathcal{\tilde F}(1)$ which consist of polynomials $p_n$ which attain $\kappa_n(\T, 1)$, where $n\in\N$. Comparing the geometry of $\Lambda_{p_n}$ with that of the minimizers for $t<1,$ we notice there is an essential difference: By Theorem \ref{NPS2}, we know that $m(\{\vert p_n\vert < 1\}\cap r\D)\geq c_{NPS}r^2\frac{1}{\log\nu(u_n, r\T)}$, where $u_n = \log \vert p_n\vert$. Since the left hand side approaches zero as $n$ tends to infinity, this implies that $\Lambda_{p_n}$ intersect every $r\D$, $r\in(0, 1)$, a growing number of times (as a function of $n$). So the strategy used for $t<1$ does not work here. Unfortunately we are unable to decide whether $\mathcal{\tilde F}(1)$ is normal in $\D$ or not. However see the end of Section \ref{normalsection} for a discussion on the consequences of non-normality of $\mathcal{\tilde F}(1)$ and how it leads to sharp area bounds. We finish that section by relating normality to a certain unstable behavior of the minimizing lemniscates.

\section{Proofs of upper bounds in Theorems~\ref{thm:arealevel1}, \ref{thm:arealevelmorethan1}, \ref{thm:arealevellessthan1} }\label{sec:UB}
In this section, we prove the upper bounds for $\kappa_n(\T,t)$ for all $t>0$. 

For the case $0<t<1$ the upper bound stated in Theorem~\ref{thm:arealevellessthan1} is obtained easily by considering the polynomial $p_n^*(z)=z^n-1$. 
To wit
$$\kappa_n(\T, t)\leq m\left(\Lambda_{p_n^*}(t)\right)\leq \frac{c_0}{n},$$
where $c_0=c_0(t)$ is independent of $n$. The right most inequality follows by observing that if $|z^n - 1| < t,$ then $|z|\in \left( (1-t)^{\frac{1}{n}}, (1+t)^{\frac{1}{n}}\right)$.
Note that $\Lambda_{p_n^*}(1)$ has area bounded away from zero, hence  $p_n^*$ does not give a useful upper bound for $\kappa_n(\T,t)$ for $t\ge 1$.

We now give a construction that shows the stronger statement that
\begin{align}\label{eq:ubdforhighlevel}
\kappa_n(\T,(\log n)^{\alpha})\le \frac{C}{\log \log n}
\end{align}
for some $\alpha>0$ and some $C<\infty$. This of course proves the upper bounds in Theorems~\ref{thm:arealevel1}, \ref{thm:arealevelmorethan1}.

Wagner~\cite{wagner} showed that $\kappa_n(\T,1)\le C_{\delta}(\log \log n)^{-\frac12+\delta}$  for every $\delta>0$. We combine ideas from Wagner's construction along with an ingredient from Nazarov-Polterovich-Sodin~\cite{NPS} to construct a sequence of polynomials that give  the improved estimate~\eqref{eq:ubdforhighlevel}.  
First we describe the ingredient from \cite[Section~6]{NPS} that we use. 

There exists an  entire function $E$ that is real on the real line, and satisfies  \begin{inparaenum}[(a)] 
\item $|E(z)|\le c_1$ for some $c_1$ and all  $z$ in $\{\Re(z)> 0, |\Im z|\ge \frac{\pi}{2}\}$, 
\item $E(x)\sim e^{e^x}$ as $x\to \infty$ on the real axis,  and 
\item if   $Q_R(z)=E(R+z)-E(R)=\sum_{k=1}^{\infty}a_k(R)z^k$ 
then  
\ben\label{eq:coeffest}
|a_k(R)|\le \left(\frac{c}{\log k}\right)^k \mbox{ for all }k\ge 1,
\een
 for some $c<\infty$ and all $R>1$.
\end{inparaenum}

The proof of \eqref{eq:ubdforhighlevel} will be carried out in several steps, whose broad outline is as follows. 
\begin{itemize}
    \item First we construct a harmonic function $u$ on $\overline{\D}$ vanishing at the origin and such that area of $\{u<T\}\cap \D$ is small for a suitably large positive $T$.
\item We show that $u$ can be replaced by $U_{\mu}$, the logarithmic potential of a probability measure $\mu$ on $\T$, such that $\{U_{\mu}<T\}\cap \D(0, 3)$ has small area.
\item We modify $\mu$ to get a discrete measure $\nu$ on $\T$, such that $\{U_{\nu}<T\}\cap\D(0, 3)$ has small area. 
\item Using $\nu$, we construct polynomials $p_n\in\mathcal{P}_n(\T)$ whose $t_n$-lemniscate has small area, for $t_n=(\log n)^{\alpha}$ for some $\alpha>0$.
\end{itemize}

\bigskip

\begin{proof}[Proof of \eqref{eq:ubdforhighlevel}]
{\bf Step-1}: Define truncations of the series defining $Q_R$ by $Q_{R,N}(z)=\sum_{k=1}^{N}a_k(R)z^k$. Make the choice $N=\big\lceil e^{2cR}\big \rceil$ (this will hold throughout the remaining steps). From the coefficient estimate \eqref{eq:coeffest},  we have $|Q_R(z)-Q_{R,N}(z)|\le \sum_{k>N}(c|z|/\log N)^k\le \frac{1}{2^N}$ if $|z|\le R$. Therefore, if $|z|\le R$, then 
\begin{align*}
\Re Q_{R,N}(z)&\le \Re (E(R+z)-E(R))+\frac{1}{2^N} \nonumber \\
&\le |E(R+z)|-E(R)+\frac{1}{2^N}. 
\end{align*}
Now $|E(R+z)|\le c_1$ if  $|\Im z|\ge \frac{\pi}{2}$. Therefore, if  $R$ is large enough, then
\begin{align}\label{eq:bdonrealQRN}
\Re Q_{R,N}(z)&\le -\frac12 E(R), \;\;  \mbox{ if }|z|\le R\mbox{ and } |\Im z|\ge \frac{\pi}{2}.
\end{align}
Now define $u_{R,N}(z)=-\Re Q_{R,N}(Rz/2)$. Then $u_{R,N}$ is harmonic on the plane, $u_{R,N}(0)=0$ and \eqref{eq:bdonrealQRN} implies that $\{u_{R,N}<\frac12 E(R)\}\cap \D(0,2)$ 
has area at most $\frac{C}{R}$ (here $C=8\pi$ suffices).

\medskip
{\bf Step-2}:  Define  $v_{R,N}(z)=1 + \frac{2}{A_{R,N}}\sum_{k=1}^N ka_k(R)(R/2)^k \Re(z^k),$ where  $A_{R,N}=4\sum_{k=1}^Nk|a_k(R)|(R/2)^k$. Then $\frac12 \le v_{R,N}\le \frac32$ on $\T$ and hence $v_{R,N}(e^{i\theta})\frac{d\theta}{2\pi}$ defines a probability measure $\mu=\mu_{R,N}$ on $\T$ with Fourier coefficients $\hat{\mu}(0)=1$ and $\hat{\mu}(k)= \frac{1}{A_{R,N}}ka_k(R)(R/2)^k$ for $1\le |k|\le N$ (and all other Fourier coefficients are zero).

 Observe that the logarithmic potential of $\mu$ is given by
\ba
U_{\mu}(z) &= \int \log|z-e^{it}|d\mu(t) = \begin{cases}
-\sum_{k\ge 1}\frac{\hat{\mu}(k)}{k}\Re(z^k) & \mb{ if }|z|<1\\
\log|z|-\sum_{k\ge 1}\frac{\hat{\mu}(k)}{k}\Re(\frac{1}{z^k}) & \mb{ if }|z|>1, 
\end{cases}
\ea
Plugging in the  Fourier coefficients of $\mu$, we obtain 
\ba
U_{\mu}(z)&=\begin{cases}
-\frac{1}{A_{R,N}}\sum_{k=1}^Na_k(R)\Re((Rz/2)^k) & \mb{ if }|z|<1, \\
\log|z|-\frac{1}{A_{R,N}}\sum_{k=1}^Na_k(R)\Re((R/(2z))^k) & \mb{ if }|z|>1,
\end{cases} \\
&\ge
\begin{cases}
\frac{1}{A_{R,N}}u_{R,N}(z), & \mb{ if }|z|<1, \\
\frac{1}{A_{R,N}}u_{R,N}(1/z) & \mb{ if }|z|>1.
\end{cases}
\ea
Using the conclusion of  Step-1,  we get $U_{\mu}(z)>\frac{E(R)}{2A_{R,N}}$ if either \begin{inparaenum}[(i)]\item $|\Im z|\ge \frac{\pi}{R}$ and $|z|<1$ or \item $|\Im \frac{1}{z}|\ge \frac{\pi}{R}$ and $|z|>1 $. \end{inparaenum} As $\Im \frac{1}{z}=\frac{\Im z}{|z|^2}$, it follows that 
\ben\label{eq:UmuLowerBd}
U_{\mu}(z)>\frac{E(R)}{2A_{R,N}}, \;\; \mbox{ if }|z|\le 3\mbox{ and }|\Im z|\ge \frac{9\pi}{R}. 
\een
In particular, the complementary region $\{U_{\mu}\le \frac{E(R)}{2A_{R,N}}\}\cap \D(0,3)$ has area at most $\frac{C}{R}$.

{\bf Step-3}:  Choose $M=\Big\lceil \frac{16RA_{R,N}}{E(R)}\Big\rceil$ (for reasons to be clear later) and pick angles $\theta_1,\ldots ,\theta_M$ so that the arcs satisfy $\mu([\theta_j,\theta_{j+1}])=1/M$ for all $1\le j\le M$ (here $\theta_{M+1}=\theta_1$). As $\frac12 \le v_{R,N}\le \frac32$, each of these arcs is of length  between $1/(2M)$ and $2/M$. Define the normalized counting measure  $\nu:=\frac{1}{M}\sum_{j=1}^M\delta_{e^{i\theta_j}}$.  Observe  that if $t,s$ belong to the same arc $[\theta_j,\theta_{j+1}]$, then
\[
|\log|z-e^{it}|-\log|z-e^{is}||\le \frac{1}{d(z,S^1)}|e^{it}-e^{is}| \le \frac{4}{|1-|z|| \ M}.
\]
Therefore, taking $s=\theta_j$  and integrating over $t \in [\theta_j,\theta_{j+1}]$ with respect to $d\mu(t)$, and then summing over $j=1,2,...,M$ we get
\ba
|U_{\mu}(z)-U_{\nu}(z)|\le \frac{4R}{M} \;\; \mb{ if }|z|\in [0,3]\setminus [1-R^{-1},1+R^{-1}].
\ea
Now, $\frac{4R}{M}\le \frac{E(R)}{4A_{R,N}}$ by the choice of $M$, hence in view of \eqref{eq:UmuLowerBd} we have $|U_{\mu}-U_{\nu}|<\frac12 U_{\mu}$ on $\{|\Im z|\ge \frac{9\pi}{R}\}\cap \{|z|\in [0,3]\setminus [1-R^{-1},1+R^{-1}]\}$. Therefore, in this region,
\[
U_{\nu}(z)>\frac12 U_{\mu}(z) \ge \frac{E(R)}{4A_{R,N}}\ge \frac{4R}{M}
\]
by the choice of $M$. In particular, $\{U_{\nu}<\frac{4R}{M}\}\cap \D(0,3)$ has area at most  $6\times \frac{18\pi}{R}+\frac{4\pi}{R}\le \frac{C}{R}$. 

 From the estimates \eqref{eq:coeffest} for $a_k(R)$, 
\ba
A_{R,N}\le 4N\sum_{k=1}^N(cR/2)^k \le 4N^2(cR/2)^N \le 4e^{4cR}(cR)^{e^{2cR}}\le e^{e^{CR}}
\ea
where $C>2c$.
For $C$ large, the quantity on the right hand side  dominates $R$ as well as $E(R)\sim e^{e^{R}}$. Hence we  get the  bound $M\le e^{e^{CR}}$ (with a possibly larger $C$) or equivalently $R\ge \frac{1}{C}\log \log M$. Therefore,
\ben\label{eq:Unu}
\mbox{area}\left(\left\{U_{\nu}\le \frac{4\log \log M}{CM}\right\}\cap \D(0,3)\right)\le \frac{C'}{\log \log M}.
\een

{\bf Step-4}: Now we define the polynomial 
\[
p_M(z)=\prod_{k=1}^M(z-e^{i\theta_k}).
\]
Then $\frac{1}{M}\log|p_M(z)|=U_{\nu}(z)$, and hence applying \eqref{eq:Unu} we have (taking $\alpha=\frac{4}{C}$)
\begin{align*}
\mbox{area}\left(\left\{|p_M| \le (\log M)^{\alpha}\right\}\cap \D(0,3)\right)\le \frac{C'}{\log \log M}.
\end{align*}
 As $p_M$ has all zeros on $\T$, we observe that for $|z|\ge 3$,
\[
|p_M(z)|\ge 2^M 
\]
which is larger than $(\log M)^{\alpha}$ for large enough $M$, so that the lemniscate $\Lambda_{p_M}((\log M)^{\alpha})$ is contained in $\D(0,3)$ and has area bounded by $\frac{C'}{\log\log M}$. 

All that is left is to show that $M$ runs through all large enough positive integers. To see this (remember that $N$ depends on $R$ and is integer valued), observe that
\[
\frac{RA_{R,N}}{E(R)} = \sum_{k=1}^N\frac{k|a_k(R)|(R/2)^k}{E(R)}.
\]
Each term in the above sum is a continuous function of $R$. Further, from \eqref{eq:coeffest}, we see that 
\[
\frac{(N+1)|a_{N+1}(R)|(R/2)^{N+1}}{E(R)}\to 0
\]
as $R\to \infty$. Putting these together, it follows that  there exists $M_0\in\N$ such that $M$ attains all integer values in $[M_0,\infty)$ as $R$ increases. 

In conclusion,  for each $M\ge M_0$, we can find a polynomial $p_M\in\mathcal{P}_M(\T)$ which satisfies
\begin{align}\label{eq:finalubdpoly}
    m(\Lambda_{p_M}((\log M)^{\alpha})\le \frac{C'}{ \log \log M},
\end{align}
and this implies the desired upper bound \eqref{eq:ubdforhighlevel}.
\end{proof}

\section{Proof of Theorem \ref{thm:areageneralcapacity1set}}
Let $K=\overline{\Omega}$, where $\Omega$ is a bounded open set with $C^2$ smooth boundary $\partial \Omega$, and moreover $\mbox{cap}(K) = 1$. 

Without loss of generality, we assume that $\Omega$ is simply connected. If not, we can replace $\Omega$ by $\hat{\Omega}$, defined as the union of $\Omega$ with all the bounded components of $\C\setminus \Omega$. Then $\hat{\Omega}$ also has capacity $1$. In addition, the polynomials we construct for $\hat{\Omega}$ will have zeros on $\partial\hat{\Omega}\subseteq \partial \Omega$, hence the same polynomials work for $\Omega$.

Henceforth assume that $\Omega$ is simply connected. Let $\nu$ denote the equilibrium measure of $\bar{\Omega}$. The unit capacity condition along with Frostman's lemma gives that $U_{\nu}=0$ on $\bar{\Omega}$. If $\sigma$ denotes the arc length measure on $\partial \Omega$, we can write $d\nu=gd\sigma$ for some $g\in C^{1}(\partial \Omega)$, with $B_1 < g < B_2$ for some finite positive constants $B_1$ and $B_2$, c.f., \cite[Ch. II, Cor. 4.7]{Marshall}.

The proof is analogous to the proof for the unit disc, with some modifications. 

{\bf Step-1}: Fix $\eps > 0$. The goal is to construct $u\in C(\bar{\Omega})$ that is harmonic in $\Omega$, satisfies $\int ud\nu=0$, and so that $m\{u\le T\}<\eps$ for a suitably large $T>0$. 

 Let $E$ be the  entire function used in Step-1 of the proof of \eqref{eq:ubdforhighlevel}. For $R>0$, and $\alpha\in [0,2\pi)$, define
\[v_{R,\alpha}(z)=\Re E(R+Rze^{i\alpha}), \hspace{0.05in} z\in \overline{\Omega}\]  Observe that  
\ba
\intt_0^{2\pi}\!\!\intt_{\partial \Omega}v_{R,\alpha}d\nu \frac{d\alpha}{2\pi}= \intt_{\partial \Omega}\!\!\intt_0^{2\pi}\Re E(R+Rze^{i\alpha}) \frac{d\alpha}{2\pi} \ d\nu = \intt_{\partial \Omega} \Re E(R) d\nu = E(R).
\ea
Hence, there is some $\alpha$ (depending on $R$) such that $\int_{\partial \Omega}v_{R,\alpha}d\nu\ge  E(R)$. We fix that $\alpha$ and set $u(z)=-v_{R,\alpha}(z)+\int_{\partial \Omega}v_{R,\alpha}d\nu$ for $z\in \overline{\Omega}$ and let $T=\frac12 E(R)$. Then $u$ is harmonic in $\Omega,$ and $\int ud\nu=0$. Further, if $u(z)<T$, then $\Re E(R+Rze^{i\alpha})>\frac12 E(R)>c_1$, if $R$ is large enough. Hence $|\Im\{Rze^{i\alpha}\} | \le \pi$. But this is a strip of width $C/R$, hence 
\begin{align}\label{eq:bddforharmonic}
    m\left(\left\{u<T \right\}\cap \Omega\right)\le \frac{C}{R}.
\end{align}

{\bf Step-2}:
We want to show that there exists a finite positive measure $\mu$ on $\partial \Omega$ such that $m(\{U_{\mu}\le T/2\}\cap \Omega)\le m(\{u\le T\}\cap \Omega)+\eps$.

Fix a  compact $F\subseteq \Omega$ such that $m(\Omega\setminus F)<\eps$. From Lemma~\ref{lem:approxharmonicpotential} below, we obtain a signed measure $\theta$ having bounded density w.r.t. $\sigma$, such that $\|U_{\theta}-u\|_{\sup(F)}<\frac{T}{2}$. Then 
$$
    m\left(\left\{U_{\theta}\le \frac{T}{2}\right\}\cap \Omega\right)\le m\left(\{u\le T\}\cap \Omega\right)+\eps.
$$

Using the fact that the equilibrium measure $\nu$ has a density bounded below, we see that if $C$ is  large enough and $b$ is chosen appropriately,  then $\mu=b(\theta+C\nu)$ is a probability measure with density $h$  (w.r.t. $\sigma$) satisfying $M^{-1}\le h\le M$ for some $M\in (0,\infty)$.  But $U_{\theta}=bU_{\mu}$ on $\bar{\Omega}$, hence
\begin{align}\label{eq:bddforUmu}
m\left(\left\{U_{\mu}\le \frac{T}{2b}\right\}\cap \Omega\right)\le m\left(\{u\le T\}\cap \Omega\right)+\eps.
\end{align}

{\bf Step-3}:  Let $d\mu(t)=h(t)d\sigma(t)$ be the probability measure constructed in the previous step. Discretize $\mu$  by partitioning $\partial \Omega$ into arcs $I_1,\ldots ,I_N$ of equal measure  $\mu(I_j)=\frac{1}{N}$. Choose $w_j\in I_j$ and define  $\mu_N=\frac{1}{N}(\delta_{w_1}+\ldots +\delta_{w_N})$.  Clearly we can couple $V\sim \mu$ and $V_N\sim \mu_N$ such that they belong to the same $I_j$, in particular $|V-V_N|\le \sigma(I_j)\le \frac{M}{N}$ (by the lower bound on $h$). Hence, if $z\in \Omega$ with $d(z,\partial \Omega)\ge \frac{2M}{N}$, then applying the mean-value theorem to $x\mapsto \log|z-x|$, we get
\begin{align*}
    |U_{\mu_{N}}(z)-U_{\mu}(z)| &\le \E[|\log|z-V|-\log|z-V_N||]  \\
    &\le \frac{1}{d(z,\partial \Omega)-\frac{M}{N}}\E[|V_N-V|] \\
    &\le \frac{2M}{Nd(z,\partial \Omega)}.
\end{align*}
Choose $N=\lfloor 2M/\eps^2\rfloor$. Then  $|U_{\mu}-U_{\mu_N}|<2\eps$ on $\Omega_{\eps}=\{z\suchthat d(z,\partial \Omega)>\eps\}$. Therefore, for large enough $R$,
\begin{align}\label{eq:lemniscatebdformuN}
m\left(\left\{U_{\mu_N}\le \frac{T}{4b}\right\}\cap \Omega\right) &\le m\left(\left\{U_{\mu}\le \frac{T}{2b}\right\}\cap \Omega\right)+m(\C \setminus \Omega_{\eps}) \nonumber \\
&\le \frac{C}{R}+C\eps
\end{align}for a large enough constant $C$. In the last line we used  \eqref{eq:bddforharmonic} and \eqref{eq:bddforUmu}. For large $R$, the right side of \eqref{eq:lemniscatebdformuN} is smaller than $2C\eps$.

{\bf Step-4}: Let $P_N= (z-w_1)\ldots (z-w_N)$. Then  $\{|P_N|\le 1\}=\{U_{\mu_N}\le 0\}$ and therefore, 
\begin{align*}
m(\Lambda_{P_N}\cap \Omega)\le m\left(\left\{U_{\mu_N}\le \frac{T}{4b}\right\}\cap \Omega\right) \le 2C\eps.
\end{align*}
To summarize, taking a sequence $\eps_k\downarrow 0$, we have $N_{k}\uparrow \infty$ and polynomials $P_{N_{k}}$ of degree $N_{k}$ such that $m(\Lambda_{P_{N_{k}}}\cap \Omega)\downarrow 0$. From Theorem~\ref{zero distribution}, it follows that the empirical distribution $\mu_{k}$ of the zeros of  $P_{N_{k}}$ converges to $\nu$. By the harmonicity of $U_{\nu}$ on  $\C\setminus \overline{\Omega}$, we know that 
\begin{align}\label{eq:UnupositiveoutsideOmega}
\inf\{U_{\nu}(z)\ : \ d(z,\Omega)>\delta\} >0
\end{align}
for any $\delta>0$. We claim that $U_{\mu_{k}}\to U_{\nu}$ uniformly on $\{z\ : \ d(z,\Omega)>\delta\}$. To see this, 
couple $V_{k}\sim \mu_{k}$ with $V\sim \nu$ so that $V_{k}\to V$ a.s., and hence $\E[|V_k-V|]\to 0$ by the dominated convergence Theorem. Therefore,
\begin{align*}
    |U_{\mu_{k}}(z)-U_{\nu}(z)| &\le \E[|\log|z-V|-\log|z-V_{k}||] \\
    &\le \frac{1}{d(z,\partial \Omega)}\E[|V_{k}-V|]
\end{align*}
which converges to $0$ as $k\to \infty$. Then \eqref{eq:UnupositiveoutsideOmega} implies that 
\[
m(\Lambda_{P_{N_k}}) \le m(\Lambda_{P_{N_{k}}}\cap \Omega) + C\delta.
\]
Let $k\to \infty$ and $\delta\downarrow 0$, we see that $\inf_n \kappa_n(K,1)=0$. 

This completes the proof of Theorem~\ref{thm:areageneralcapacity1set} except for the following lemma.

\begin{lemma}\label{lem:approxharmonicpotential} Let $\Omega\subseteq \C$ be a bounded domain with $C^2$ boundary. Let $\nu$ be the equilibrium measure of $\overline{\Omega}$ and let $\sigma$ be the arc-length measure on $\partial \Omega$. Let $u\in C(\bar{\Omega})$ be harmonic in $\Omega$ and satisfy $\int ud\nu=0$. Then there exist signed measures $\theta_n$ on $\partial \Omega$ having continuous densities w.r.t. $\sigma$ and such that $U_{\theta_n}\to u$ uniformly on compact subsets of $\Omega$. 
\end{lemma}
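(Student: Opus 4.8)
The plan is to reinterpret the statement as a closed-range property of the single-layer logarithmic potential operator on $\partial\Omega$. Let $\sigma$ be arc length on $\partial\Omega$ and, using $\operatorname{cap}(\overline{\Omega})=1$ together with Frostman's theorem, note that $U_\nu\equiv 0$ on $\overline{\Omega}$; write $d\nu=g\,d\sigma$ with $0<B_1\le g\le B_2<\infty$ (the equilibrium density of a $C^2$ domain is bounded above and below, cf.\ \cite{Marshall}). Define $S\colon L^2(\sigma)\to L^2(\sigma)$ by $(Sf)(z)=\int_{\partial\Omega}\log|z-w|\,f(w)\,d\sigma(w)$ for $z\in\partial\Omega$. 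Since $\partial\Omega$ is a bounded $C^2$ curve, $\log|z-w|\in L^2(\partial\Omega\times\partial\Omega,\,\sigma\otimes\sigma)$, so $S$ is Hilbert--Schmidt, hence compact; and its kernel is real and symmetric, so $S$ is self-adjoint. Note $Sg=U_\nu|_{\partial\Omega}\equiv 0$, so $g\in\ker S$.

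For $f\in C(\partial\Omega)$ the signed measure $\theta=f\,\sigma$ has continuous density, $U_\theta$ is harmonic in $\Omega$ and continuous on $\C$, and $U_\theta|_{\partial\Omega}=Sf$; hence on $\Omega$ the function $U_\theta$ equals the Poisson integral $\mathrm{PI}_\Omega[Sf]$. For a $C^2$ domain the Poisson kernel $P_\Omega(z,w)$ (the density of harmonic measure $\omega_z$ with respect to $\sigma$) is continuous on $\Omega\times\partial\Omega$, so $\sup_{z\in K}\|P_\Omega(z,\cdot)\|_{L^2(\sigma)}<\infty$ for every compact $K\subset\Omega$, and Cauchy--Schwarz gives $\sup_{z\in K}|\mathrm{PI}_\Omega[\phi](z)|\le C_K\|\phi\|_{L^2(\sigma)}$ for $\phi\in L^2(\sigma)$. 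Since the $C^2$ domain $\Omega$ is regular and $u\in C(\overline{\Omega})$ is harmonic, $u=\mathrm{PI}_\Omega[u|_{\partial\Omega}]$. Therefore, if we can find $f_n\in C(\partial\Omega)$ with $Sf_n\to u|_{\partial\Omega}$ in $L^2(\sigma)$, then $\theta_n:=f_n\,\sigma$ have continuous densities and $U_{\theta_n}-u=\mathrm{PI}_\Omega[Sf_n-u|_{\partial\Omega}]\to 0$ uniformly on compact subsets of $\Omega$, proving the lemma. Because $C(\partial\Omega)$ is dense in $L^2(\sigma)$ and $S$ is bounded, such $f_n$ exist iff $u|_{\partial\Omega}\in\overline{\mathrm{ran}\,S}$, and by self-adjointness $\overline{\mathrm{ran}\,S}=(\ker S)^{\perp}$.

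So it remains to show $u|_{\partial\Omega}\perp\ker S$, for which I would identify $\ker S=\mathbb{R}g$. We already have $\mathbb{R}g\subseteq\ker S$. Conversely, let $f\in\ker S$ and put $c=\int_{\partial\Omega}f\,d\sigma$; then $\theta:=(f-cg)\,\sigma=f\sigma-c\nu$ has total mass $\theta(\C)=c-c=0$, has finite logarithmic energy (its density lies in $L^2(\sigma)$), and satisfies $U_\theta=Sf-c\,U_\nu=0$ $\sigma$-a.e.\ on $\partial\Omega$; since $\theta\ll\sigma$, the logarithmic energy $\mathcal{E}(\theta)=-\int U_\theta\,d\theta$ is therefore $0$. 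The logarithmic kernel is strictly positive-definite on finite-energy signed measures of total mass zero (see \cite{ransford}), so $\theta=0$, i.e.\ $f=cg$; hence $\ker S=\mathbb{R}g$. Finally $\langle u|_{\partial\Omega},g\rangle_{L^2(\sigma)}=\int u\,g\,d\sigma=\int u\,d\nu=0$ by hypothesis, so $u|_{\partial\Omega}\perp\ker S$ and the proof is complete.

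The step I expect to be the crux is the identification $\ker S=\mathbb{R}g$: it is exactly where the unit-capacity hypothesis enters (through $U_\nu\equiv0$ on $\overline{\Omega}$), it makes transparent why the normalization $\int u\,d\nu=0$ is unavoidable (when $\operatorname{cap}(K)=1$ the nonzero constants are \emph{not} in $\overline{\mathrm{ran}\,S}$), and the reverse containment rests on positive-definiteness of logarithmic energy for mass-zero measures applied to a measure with merely an $L^2$ density. The remaining ingredients --- that single-layer potentials with bounded density are continuous on $\C$, that the Poisson kernel of a $C^2$ domain is continuous on $\Omega\times\partial\Omega$, and that $S$ is Hilbert--Schmidt --- are standard but should be cited or checked; the argument as organized makes no use of the connectivity of $\C\setminus\overline{\Omega}$, so it applies to an arbitrary bounded $C^2$ domain.
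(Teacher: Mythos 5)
Your argument is correct, and it reproduces the paper's overall architecture (identify $u|_{\partial\Omega}$ as belonging to a closed subspace of $L^2(\sigma)$ whose orthocomplement is spanned by the equilibrium density $g$, then push back into $\Omega$ with the Poisson kernel) but differs at the crucial step. The paper works directly with $\mathcal H=\overline{\mathrm{span}}\{\log|\cdot-w|:w\in\partial\Omega\}$, which is precisely your $\overline{\mathrm{ran}\,S}$; you make the operator-theoretic framing explicit, which is a harmless repackaging. The genuine divergence is in proving that the orthocomplement is $\mathbb R g$. The paper takes $f\perp\mathcal H$, splits $f=f_+-f_-$, assembles two \emph{positive} measures $\mu_1,\mu_2$ with the same mass and equal potentials on $\partial\Omega$, and then uses continuity of the single-layer potential, decay at infinity, and the maximum principle on the two components of $\C\setminus\partial\Omega$ to conclude $U_{\mu_1}\equiv U_{\mu_2}$ and hence $\mu_1=\mu_2$. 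You instead form the mass-zero signed measure $\theta=(f-cg)\sigma$ with $L^2$ density, observe $U_\theta=0$ $\sigma$-a.e.\ on $\partial\Omega$ so that $\theta$ has zero logarithmic energy, and invoke strict positive-definiteness of the logarithmic kernel on finite-energy mass-zero signed measures to get $\theta=0$. Both routes are valid. Your energy argument is cleaner in that it avoids the Jordan-decomposition bookkeeping and the continuity-plus-maximum-principle step, and — as you observe — it does not use connectivity of $\C\setminus\overline\Omega$, whereas the paper's max-principle argument implicitly relies on the ``WLOG simply connected'' reduction made before the lemma. One small presentational point: the Hilbert–Schmidt/compactness observation about $S$ is never used in your proof (boundedness and self-adjointness suffice for $\overline{\mathrm{ran}\,S}=(\ker S)^\perp$), so it could be dropped. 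You should also note in passing, as the paper does, that finiteness of the energy of $\theta$ follows from the $L^2$ bound on its density together with the square-integrability of $\log|z-w|$ along the $C^2$ curve, since the positive-definiteness statement you cite requires finite energy as a hypothesis.
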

\begin{proof} Let $\phi_w(z)=\log|z-w|$ and let $\mathcal{H}=\overline{\mb{span}}\{\phi_w\suchthat w\in  \partial \Omega\}$ inside $L^2(\sigma)$. Let $g$ be the density of $\nu$ w.r.t. $\sigma$ (note that $g$ is supported on the exterior boundary of $\Omega$). Then,  $\<\phi_w,g\>_{L^2(\sigma)}=U_{\nu}(w)=0$ for all $w\in \Omega$, hence $g\perp\mathcal{H}$. 

We claim that if $f\perp \mathcal{H}$, then  $f=cg$ for some $c\in \R$. To prove this, let $f\perp\mathcal{H}$. Then 
\begin{align}\label{eq:signedpotentialvanishing}
U_{fd\sigma}(w)=\<f,\phi_w\>=0 \hspace{0.05in}\mbox{for each}\hspace{0.05in} w\in  \partial \Omega.    
\end{align}
Write $f=f_+-f_-$ and let $c_{\pm}=\int f_{\pm }d\sigma$. Without loss of generality we may assume that $c_+\ge c_-$. Now set $d\mu_1=f_+d\sigma$ and $d\mu_2=f_-d\sigma+(c_+-c_-)gd\sigma$. Then both $\mu_1$ and $\mu_2$ are positive measures with the same total mass and absolutely continuous to $\sigma$. Using \eqref{eq:signedpotentialvanishing} along with the fact that $U_{\nu} = 0$ on $\partial\Omega,$ we obtain $U_{\mu_1}= U_{\mu_2}$ in $\partial \Omega$. Further $U_{\mu_1}(z)-U_{\mu_2}(z)\to 0$ as $z\to \infty$, as $\mu_i$ have the same total mass. Moreover, $U_{\mu_i}$ are continuous on $\C$ as $\mu_i$ are absolutely continuous\footnote{More generally, suppose $\mu$ is a finite (positive) measure on $\partial \Omega$ such that $\mu(B(w,r))\le Cr^{\alpha}$ for all $w$ and $r>0$, for some $\alpha>0$ and  $C<\infty$. If $w_n\to w$, then it follows that $U_{\mu}(w_n)\to U_{\mu}(w)$, provided we show that $f_n(z)=\log|z-w_n|$ are uniformly integrable w.r.t. $\mu$. To see this, note that  $(f_n)_+$ are obviously uniformly bounded and $(f_n)_-$ are bounded in $L^2(\mu)$. The latter follows from writing $\int (f_n)_-^2d\mu$ as   \begin{align*}
        \int_0^{\infty} \mu\{z\ : \ (\log_-|z-w_n|)^2\ge t\}dt        &=\int_0^{\infty}\mu(B(w_n,e^{-\sqrt{t})}))dt \ \le \ C \int_0^{\infty}e^{-\alpha \sqrt{t}}dt.
    \end{align*}} to $\sigma$. Therefore, by the maximum principle (applied to the two components of the complement of the exterior boundary of $\Omega$) we get  $U_{\mu_1}=U_{\mu_2}$ on $\C$. This implies (on applying Laplacian)  that $\mu_1=\mu_2$. Hence, $f_+=f_-+(c_+-c_-)g$ or in other words, $f=cg$ for some $c$.

Now suppose $u$ is as in the statement of the lemma. Then $\<u,g\>_{L^2(\sigma)}=0$, hence $v:=u\vert_{\partial \Omega}$ belongs to $\mathcal{H}$. This means that there are discrete signed measures $\tau_n$ supported on finitely many points in $\partial \Omega$  such that $U_{\tau_n}\to v$ in $L^2(\sigma)$. 
Replacing point masses by probability densities on $\partial \Omega$ concentrated near the points, we modify $\tau_n$ to obtain $\theta_n$ that are absolutely continuous with respect to $\sigma$ and such that $U_{\theta_n}\to v$ in $L^2(\sigma)$.  Hence,  $U_{\theta_n}$ are continuous in $\overline{\Omega}$.  As $U_{\theta_n}-u$ is harmonic in $\Omega$, this shows (e.g., by writing the Poisson integral formula) that $U_{\theta_n}\to u$ uniformly on compact subsets of $\Omega$. 
\end{proof}

While Theorem \ref{thm:areageneralcapacity1set} only proves that $\inf\kappa_n(K, 1)= 0$ for smooth enough compact sets $K$ with capacity $1$, it is nevertheless possible to get quantitative estimates for certain special class of compacts $K,$ namely closed unit lemniscates. We collect this as a Proposition below.

\bdefn\label{genK}
Let $K$ be a compact set of capacity $1$. We say that $K$ is \emph{generated by} a polynomial $q$, if $q$ is monic and 

\[K = \{z:|q(z)|\leq 1\}.\]

\noindent Since $q^j$ also generates $K$ for every $j\in\N,$ we assume that $d=deg(q)$ is the minimum degree among all monic polynomials which have $K$ as a unit lemniscate.
\edefn

\begin{proposition}\label{lemniscate bound}
 Let $K$ be a compact set of capacity $1$ generated by a polynomial $q$ with $\deg(q) = d$. Let $t\geq 1$ be fixed. Then, there exists $C = C(t)>0$ such that for all large enough $n$, we have
\[\kappa_n(K, t)\leq \dfrac{C}{(\log\log n)^{\frac{1}{d}}}\]  \end{proposition}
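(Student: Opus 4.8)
The plan is to reduce the problem for a general capacity-one compact set $K = \{|q| \le 1\}$ with $\deg q = d$ to the already-established upper bound construction on $\T$, using a substitution by $q$. The key observation is that if $p_M \in \mathcal{P}_M(\T)$ is the degree-$M$ polynomial constructed in the proof of \eqref{eq:ubdforhighlevel}, then $p_M \circ q$ is a monic polynomial of degree $dM$, and its zeros are the solutions of $q(z) = e^{i\theta_k}$, which all lie on $\{|q(z)| = 1\} = \partial K \subseteq K$. Hence $p_M \circ q \in \mathcal{P}_{dM}(K)$, and its lemniscate at level $t$ is $\{|p_M(q(z))| \le t\} = q^{-1}(\Lambda_{p_M}(t))$.

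The main step is then to transfer the area estimate through the preimage map $q^{-1}$. Recall from Step-4 of the proof of \eqref{eq:ubdforhighlevel} that $\Lambda_{p_M}((\log M)^\alpha)$ is contained in the fixed disc $\D(0,3)$ and has area at most $C'/\log\log M$. The crucial analytic input I would use is the change-of-variables/area formula: writing $S = \Lambda_{p_M}((\log M)^\alpha) \cap K' $ for a suitable compact $K'$ containing $K$ (we need $q(K) \subseteq \D(0,3)$, which holds after, say, replacing the disc radius by a constant depending only on $K$ — note $q$ is a fixed polynomial),
\[
m\big(q^{-1}(S)\big) = \int_{S} \#\{z \in q^{-1}(w)\} \cdot \text{(Jacobian factor)} \, dm(w).
\]
More precisely, $m(q^{-1}(S)) = \int_{q^{-1}(S)} 1 \, dm(z)$ and by the area formula $\int_{q^{-1}(S)} |q'(z)|^2 \, dm(z) = \int_S \#(q^{-1}(w) \cap q^{-1}(S)) \, dm(w) \le d \cdot m(S)$. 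To pass from the weighted integral $\int |q'|^2 dm$ to the unweighted $m(q^{-1}(S))$, I would split $q^{-1}(S)$ into the part near the finitely many critical points of $q$ (which has small area, controllable independently of $M$ since it can be taken inside arbitrarily small discs around the critical points — and actually one should choose $t$ vs. $(\log M)^\alpha$ carefully so this contributes negligibly) and the part where $|q'|$ is bounded below by a constant $c_K > 0$, on which $m(q^{-1}(S) \cap \{|q'| \ge c_K\}) \le c_K^{-2} \int |q'|^2 dm \le c_K^{-2} d \, m(S)$.

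Finally, I would optimize the level: given a fixed target level $t \ge 1$, I want $|p_M(q(z))| \le t$, i.e.\ I want to run the construction at level $(\log M)^\alpha = t$; but since $t$ is fixed and $M \to \infty$ this is the wrong direction. Instead, note $n = dM$, so $\log\log M \asymp \log\log n$, and the construction gives a polynomial $P_n := p_M \circ q \in \mathcal{P}_n(K)$ with $m(\Lambda_{P_n}((\log M)^\alpha) \cap K) \lesssim 1/\log\log n$; since $(\log M)^\alpha \ge t$ for large $M$, we get $\Lambda_{P_n}(t) \subseteq \Lambda_{P_n}((\log M)^\alpha)$, so it remains to bound the area of the full lemniscate $\Lambda_{P_n}(t)$, not just its intersection with $K$. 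For this I would use that for $|z|$ large $|P_n(z)| = |p_M(q(z))|$ is huge (since $|q(z)| \to \infty$ and $p_M$ has all zeros on $\T$), so $\Lambda_{P_n}(t)$ is bounded; the part outside $K$ is handled exactly as in the proof of Theorem~\ref{thm:areageneralcapacity1set} via equidistribution of zeros (Theorem~\ref{zero distribution}) and harmonicity of $U_\nu$ off $K$, giving that $m(\Lambda_{P_n}(t) \setminus K) \to 0$. The gain of the exponent $1/d$ is an honest loss coming from the Jacobian transfer: the area formula produces a factor involving $|q'|^2$, and near the $d-1$ critical values the multiplicity and the degeneracy of $q'$ interact — tracking this carefully, one finds the transferred area is of order $(m(S))^{1/d}$ rather than $m(S)$, which is where the $(\log\log n)^{-1/d}$ comes from rather than $(\log\log n)^{-1}$.

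The main obstacle I anticipate is precisely this last point: getting the sharp exponent $1/d$ (rather than some weaker power) in the area transfer through $q^{-1}$. One cannot simply bound $m(q^{-1}(S)) \lesssim m(S)$ because near a critical point $z_0$ where $q(z) - q(z_0)$ vanishes to order $e \le d$, a disc of radius $\delta$ in the $w$-plane pulls back to a region of radius $\asymp \delta^{1/e}$, so a set $S$ of area $\e$ can pull back to a set of area $\asymp \e^{1/e} \ge \e^{1/d}$. One must verify that the construction's lemniscate $S = \Lambda_{p_M}((\log M)^\alpha) \cap \D(0,3)$, which is a thin set concentrated near a strip and near $\T$, does not accumulate too heavily near the critical values of $q$ — or, more robustly, just accept the worst case and show $m(q^{-1}(S)) \lesssim (m(S))^{1/d} + (\text{terms} \to 0)$ by the radius-$\delta^{1/e}$ estimate combined with a covering of $S$ by small discs. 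A clean way to do this is: cover $S$ by $O(m(S)/\delta^2)$ discs of radius $\delta$; each pulls back to $O(d)$ pieces of diameter $\lesssim \delta^{1/d}$, hence area $\lesssim \delta^{2/d}$; total $\lesssim (m(S)/\delta^2)\delta^{2/d}$, and optimize over $\delta$ — but this optimization degenerates, so instead one fixes $\delta$ as a small constant (controlling the near-critical part) and uses the bounded-Jacobian estimate elsewhere, yielding $m(q^{-1}(S)) \lesssim m(S) + \delta$-dependent-constant, which is not quite enough. Resolving this tension — showing the pullback area is genuinely $O((m(S))^{1/d})$ — is the technical heart of the proof, and I would expect to need a more careful stratified estimate near each critical value tied to its ramification index.
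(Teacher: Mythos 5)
Your reduction is exactly the one the paper uses: form $p_M\circ q$, observe its zeros lie on $\partial K$, and identify $\Lambda_{p_M\circ q}(t)=q^{-1}(\Lambda_{p_M}(t))$. The genuine gap is the step you yourself flag as ``the technical heart'': proving $m(q^{-1}(S))\lesssim (m(S))^{1/d}$. You try to assemble it from the area formula, a $|q'|^2$ weighting, and covering near critical values, but as you note the optimization degenerates and you are left with a tension you cannot resolve. This is because you are trying to reprove from scratch a sharp, known result. The paper simply invokes a theorem of Crane: for any monic polynomial $q$ of degree $d$ and any measurable $U\subseteq\C$, one has $m(q^{-1}(U))\le \pi\bigl(m(U)/\pi\bigr)^{1/d}$, with equality iff $U$ is a disc and $q$ has a single critical value at its center. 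Applying this with $U=\Lambda_{p_M}(t)$ immediately gives the $(\log\log n)^{-1/d}$ bound; your covering/Jacobian machinery is not needed and indeed would not recover the sharp exponent without something equivalent to Crane's inequality.

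Two secondary points. First, your worry about the part of $\Lambda_{P_n}(t)$ lying outside $K$ is unnecessary: Step-4 of the construction already shows that the \emph{entire} lemniscate $\Lambda_{p_M}((\log M)^\alpha)$ is contained in $\D(0,3)$ (since $|p_M|\ge 2^M$ on $|z|\ge 3$), so its preimage under $q$ is automatically all of $\Lambda_{p_M\circ q}((\log M)^\alpha)\supseteq \Lambda_{p_M\circ q}(t)$, and Crane's bound applies to the whole thing; no equidistribution argument or separate treatment of $\C\setminus K$ is required. Second, your construction only produces polynomials of degree $dM$; to get the bound for \emph{all} large $n$ (as the proposition states), you still need to handle $n=dM+r$ with $0<r<d$. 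The paper does this by multiplying $p_M\circ q$ by $(z-z_0)^r$ for a fixed $z_0\in\partial K$, noting $\{|Q_l|<t\}\subseteq\{|p_M\circ q|<(\log M)^\alpha\}\cup B(z_0,\; t^{1/d}(\log M)^{-\alpha/d})$ and that the second set has vanishing area. Your proposal omits this step entirely.
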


\begin{proof}
Let $\{p_n\}_n$ be the sequence of polynomials constructed in Step-4 of the proof of the upper bounds in Theorems \ref{thm:arealevel1} and \ref{thm:arealevelmorethan1} in Section \ref{sec:UB}. For each $n\in\N,$ define polynomials $f_n(z) = p_n(q(z)).$ Let us observe that the $f_n$ are monic polynomials with $\deg(f_n)= nd$. Note that the zeros of $f_n$ are the $q-$preimages of the zeros of $p_n$. Therefore by the definition of $K,$ it follows that all the zeros of $f_n$ are in $K.$ Next we easily see that the corresponding lemniscates are related as follows
\begin{equation}\label{polynlem}
 \Lambda_{f_n}(t) = q^{-1}(\Lambda_{p_n}(t))   
\end{equation}
We now use the following result of Crane \cite{Crane}.
\bthm\label{Crane}(Crane)
Let $q$ be a monic polynomial of degree $d.$ Let $U$ be a Lebesgue measurable set in the complex plane $\C.$ Then
\[ m(q^{-1}(U))\leq\pi \left(\frac{m(U)}{\pi} \right)^{\frac{1}{d}},\]
where equality holds iff $U$ is a disc, and $q$ has a unique critical value at the center of $U.$
\ethm

\noindent From Theorem \ref{thm:arealevelmorethan1}, we know that there exists $C_1>0$ such that for large enough $n,$ we have the estimate $m(\Lambda_{p_n}(t))\leq\frac{C_1}{\log\log n}$. Combining this with the relation \eqref{polynlem}, and applying Theorem \ref{Crane} (with $U = \Lambda_{p_n}(t)$) yields
\begin{equation}\label{cranebd}
m\left(\Lambda_{f_n}(t)\right)\leq\frac{C_2}{(\log\log n)^{\frac{1}{d}}} \leq\frac{C_3}{(\log\log (nd))^{\frac{1}{d}}}
\end{equation}
This proves the upper bound for $\kappa_j(K, t)$ for degrees $j$ belonging to $\mathcal{N}: = \{nd: n\in\N\}$. To estimate area bounds for other degrees, suppose that $l = nd + r$ for some $n\in\N$, and $0< r < d.$ Fix $z_0\in\partial K$. Consider $Q_l(z) = f_n(z)(z-z_0)^r.$ Then $\deg(Q_l) = l.$ Also observe that
\[\{|Q_l| < t\}\subset \{|f_n|< (\log n)^{\alpha}\}\cup B\left(z_0, \frac{t^{\frac{1}{d}}}{(\log n)^{\frac{\alpha}{d}}}\right)\]
From here, we use subadditivity of areas along with the estimate \eqref{cranebd} (which holds for $t=(\log n)^{\alpha}$) to obtain the desired upper bound on $m(\{\vert Q_l\vert < t\}$.
\end{proof}

\section{Proofs of lower bounds in Theorems~\ref{thm:arealevel1}, \ref{thm:arealevelmorethan1}, \ref{thm:arealevellessthan1}}\label{alllowerbounds}


In this section, we prove the lower bounds for $\kappa_n(\T,t)$ stated in Theorems \ref{thm:arealevel1}, \ref{thm:arealevelmorethan1}, and \ref{thm:arealevellessthan1}.  We also establish the lower bound for $\kappa_n(\overline{\D},t)$ stated in Theorem \ref{thm:arealevellessthan1} addressing levels $0<t<1$.
But the (comparison of constraints) estimate 
\ben
\frac{1}{3}\kappa_{n(\log n)^4}(\T,t) \leq \kappa_n(\overline{\D},t)
\een
stated in Theorems \ref{thm:arealevel1} and \ref{thm:arealevelmorethan1}
relies on different methods, and hence we defer its proof until Section \ref{sec:key} below.

One of the key tools in establishing the lower bounds addressed in this section is the theorem of Nazarov-Polterovitch-Sodin from \cite{NPS} stated above as Theorem \ref{NPS2}. 



\begin{remark}\label{NPSpolynom}
We will often use the Theorem \ref{NPS2} in the following context: Let $p$ be a polynomial of degree $n$. Then for any $t>0,$ with $h(z) = \log(\frac{|p(a + rz)|}{t})$, we have $\nu(\T, h)\le 2n$. To see this, let $M$ denote a Mobius transform taking the real line $\R$ to the circle $a + r\T$ . Denote $R(w) = p(M(w)).$ Note that $R$ is a rational function of degree $n.$ The number of sign changes of $h$ on $a+r\T$ is at most the number solutions to the system 
\begin{equation}\label{R1}
R(w)\overline{R(w)} = 1, \hspace{0.05in}\mbox{and}\hspace{0.05in} \{\Im (w) = 0\},   \end{equation}
which is at most $2n$ by the Fundamental theorem of Algebra. In addition, if $p$ has no zeros in the disc $a+ r\D,$ then $h$ is harmonic in said disc and Theorem \ref{NPS2} along with \eqref{R1} shows that $\{h<0\}\cap (a +r\D)$ has area at least $\frac{c}{\log n}$.
\end{remark}

\subsection{Level-1 lemniscates}
Let $p\in\mathcal{P}_n(\T)$. We wish to get a lower bound on $m(\Lambda_p)$. The function $u(z) = \log|p(z)|$ is harmonic in $\D$, with $u(0) = 0$. Note that $\{u < 0\} = \{|p| < 1\}.$ Consider the function $v(z) = u(0.98 z)$. This is harmonic in $\D$, continuous upto $\T$ and $v(0) = 0$. To obtain the lower bound, it is sufficient to estimate the area of the set $\{v< 0\}\cap\D$. With a view to apply Theorem \ref{NPS2} to $v$, we will estimate the corresponding doubling exponent. We will do this by proving an upper bound on $\sup_{\D}|v|$ and a lower bound on $\sup_{\frac{1}{2}\D}|v|.$ Towards this, first note that since the zeros of $p$ are all on $\T$, we have $|p(z)|\leq 2^n$ for $z\in\T$. Since $v(0) = 0,$ this gives 
\begin{align}\label{upperboundD}
\sup_{\D}|v|\leq n\log 2
\end{align}

Next observe that $\sup_{0.49\D}|p(z)|\geq {\vert\vert p\vert\vert}_{L^2(0.49\T)}$, where ${\vert\vert f\vert\vert}_{L^2(r\T)}^2 = \int_{0}^{2\pi}|f(re^{i\theta})|^2\frac{d\theta}{2\pi}$. It is easily checked that if $p(z) = \sum_{k=0}^{n}a_k z^k,$ we have ${\vert\vert p\vert\vert}_{L^2(r\T)}^2 = \sum_{k=0}^{n}|a_k|^2r^{2k}$. In our case, since $p$ is monic, we have $|a_n| = 1$. On the other hand, since all the zeros of $p$ are on $\T,$ we obtain that $|a_0| = 1.$ Putting these facts together and plugging $r= 0.49,$ we obtain that 
\[\sup_{0.49\D}|p|\geq{\vert\vert p\vert\vert}_{L^2(0.49\T)}\geq 1 + \exp(-cn),\] for some $c> 0.$ This yields that

\begin{align}\label{lowerbd0.5D}
\sup_{\frac{1}{2}\D}|v| \geq\exp(-c_1n)   
\end{align}

Plugging in the estimates \eqref{upperboundD} and \eqref{lowerbd0.5D} into the expression for $\beta(\D, v)$ we arrive at 

\[\beta(\D, v)\leq \log\left(\dfrac{n\log 2}{\exp(-c_1n)} \right)\leq c_2 n, \]

from which it follows that $\log\beta^{*}(\D, v)\leq c_3\log n.$ This completes the proof.

\subsection{Level above 1 lemniscates}

Let $t\in (1, \infty)$ be fixed. It is enough to prove the area lower bound for a sequence $p_n\in\mathcal{P}_n(\T)$ which satisfies $m(\Lambda_{p_n}(t))\to 0$ as $n\to\infty.$ Fix such a sequence. Let $u_n (z) = \log|p_n(z)|.$ Then $u_n$ is harmonic in $\D$ with $u_n(0) = 0.$ Now we have

\begin{equation}\label{doublingexponent2}
m(|p_n| < t )\geq m(\{|p_n| < 1\}\cap \D )= m(\{u_n < 0\}\cap\D)\geq \frac{c}{\log \beta^{*}(\D, u_n)},
\end{equation}

\noindent where the last line once again follows from an application of Theorem \ref{NPS2}. We next estimate $\beta(\D, u_n) = \log\left(\frac{\sup_{\D} |u_n|}{\sup_{\frac{1}{2}\D}|u_n|} \right)$ from above. Since $p_n$ has all its zeros on $\T,$ it follows as before, that $\sup_{\D}|u_n|\leq n \log 2.$ Also note that for all large enough $n,$ we have $\sup_{\frac{1}{2}\D}|p_n|\geq t,$ for otherwise along a subsequence $n_j,$ $|p_{n_j}| < t$ everywhere in $\frac{1}{2}\D$, contradicting that $m(\{|p_n| < t\})\to 0$ as $n\to\infty.$ This gives the lower bound $\sup_{\frac{1}{2}\D}|u_n|\geq \log(t).$ Plugging these upper and lower bounds into the expression for $\beta(\D, u_n),$ we obtain that for all large enough $n,$

\[\beta(\D, u_n)\leq \log\left(\frac{n\log 2}{\log t} \right)\leq c_1(t)\log n,\]
which in turn implies that $\log\left(\beta^{*}(\D, u_n)\right)\leq c(t)\log\log n$. Substituting this in the right most term of the estimate \eqref{doublingexponent2} proves the area lower bound for $t$-lemniscates and concludes the proof of the Theorem. 

\subsection{Level below 1 lemniscates}

In this section we prove the lower bound for $\kappa_n(\overline{\D}, t)$ as well as the improved lower bound for $\kappa_n(\T, t)$.

Let $t\in (0, 1)$ be fixed. We will first show that there exists a constant $c>0$ depending only on $t$ such that
\[\kappa_n(\overline{\D}, t)\geq\frac{c}{n^4}.\] 
The proof is essentially an argument of Pommerenke \cite{Pomm1961}, but for the reader's convenience we present it here.  Let $p_n\in\mathcal{P}_n(\overline{\D})$. Consider $\Lambda_n = \{z: |p_n(z)| < 1\}$, and denote by $U_n$ the component of $\Lambda_n$ that contains the origin. Pommerenke \cite[Theorem $3$]{Pomm1961} has shown that the diameter $d_n$ of $U_n$ satisfies $d_n > 1.$ Using $d_n > 1$, an application of Bernstein's inequality gives
\begin{equation}\label{Bernsteinpomm}
||p_n'||_{U_n}\leq \frac{2e}{d_n}n^2\leq 2en^2 
\end{equation}
Next, observe that some component $V_n$ of $\Lambda_n(t)$ is necessarily contained in $U_n.$ Let $w$ be a zero of $p_n$ contained in $V_n$ and $w^{*}\in\partial V_n$ be the point on the boundary closest to $w.$ Then integrating along the line segment $[w, w^{*}]$ we have
\begin{equation}\label{1/2inradius}
t= |p_n(w^{*}) - p_n(w)| = \left|\int_{w}^{w^{*}}p_n'(t)dt\right|\leq 2en^2|w - w^{*}|,  
\end{equation}
where we have used the estimate \eqref{Bernsteinpomm} to bound the integrand above. By our choice of $w^{*}$ it follows from \eqref{1/2inradius} that the ball $B(w, \frac{t}{2e n^2})\subset V_n\subset\Lambda_n(t)$. The area lower bound now follows.

We now prove the lower bound for $\kappa_n(\T, t)$ stated in Theorem \ref{thm:arealevellessthan1}, namely,
\ben\label{eq:TimprovedLB}
\kappa_n(\T,t)\ge \frac{C}{n^2\log n}.
\een
It seems likely that this can be further improved, see Proposition \ref{prop:evidence} below.

To prove the bound \ref{eq:TimprovedLB} we will need the following result.  As we shall see, it follows from (the proof of) a result of Wagner \cite{Wagner88}.

\begin{lemma}\label{lem:Wagner}
Let $g_n$ be a monic polynomial of degree $n,$ having all its zeros on $\T.$  
Fix $0<t<1$.
Let $\ell_1, \ell_2,..., \ell_k$ be the lengths of the component arcs of $\Lambda_{g_n}(t) \cap \T$.
Then, for some constant $\lambda> 0$ independent of $n$ (but depending on $t$), we have
\begin{equation}\label{diambd}
\sum_{j=1}^{k}\ell_j^2 \geq\frac{\lambda}{n}.
\end{equation}
\end{lemma}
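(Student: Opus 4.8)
The plan is to extract the inequality $\sum_j \ell_j^2 \gtrsim 1/n$ from Wagner's analysis of how a monic polynomial with zeros on $\T$ behaves on the circle, rather than to reprove it from scratch. The guiding principle is that if $|g_n| < t < 1$ on a union of arcs of total ``quadratic length'' $\sum_j \ell_j^2$ that is too small, then $\log|g_n|$ cannot drop far enough below $0$ on those arcs, contradicting $g_n$ having $n$ zeros on $\T$ (equivalently, contradicting that $\frac{1}{2\pi}\int_\T \log|g_n(e^{i\theta})|\,d\theta = 0$ while the total mass of zeros forces enough negative excursion). Concretely, write $u = \log|g_n|$, which is harmonic in $\D$ with $u(0)=0$, and for each zero $e^{i\alpha_k}$ of $g_n$ note the standard estimate $\log|e^{i\theta}-e^{i\alpha_k}| \geq \log\bigl(\tfrac{2}{\pi}\,\mathrm{dist}(\theta,\alpha_k)\bigr)$ up to a bounded additive term, so that $u(e^{i\theta})$ is controlled from below by $\sum_k \log(\text{distance to }\alpha_k)$. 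The set $\{u < \log t\}\cap\T = \Lambda_{g_n}(t)\cap\T$ is the union of the arcs $I_1,\dots,I_k$ of lengths $\ell_1,\dots,\ell_k$.

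The key step is a local lower bound: on an arc $I_j$ of length $\ell_j$ containing (or near) a cluster of zeros, the depth to which $u$ can dip is limited by $\ell_j$. Quantitatively, for any point $e^{i\theta}\in I_j$, the contribution $\sum_k \log|e^{i\theta}-e^{i\alpha_k}|$ from zeros $e^{i\alpha_k}$ lying in (a neighborhood of) $I_j$ is at least of order $-m_j \log(1/\ell_j)$ where $m_j$ is the number of such zeros, while zeros outside contribute $O(1)$ per arc after summing. Integrating $u$ over $I_j$ against $d\theta$ and using that $\int_\T u\,\frac{d\theta}{2\pi}=0$, one finds $\int_{I_j} (\log t - u)\,d\theta \lesssim \ell_j \log(1/\ell_j) \cdot (\text{local zero count})$, while the left side is $\geq (\log(1/t))\cdot \ell_j$ minus corrections. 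Summing over $j$, using $\sum_j m_j \leq n$ and the concavity/monotonicity bookkeeping $\sum_j \ell_j \log(1/\ell_j)$ versus $\sum_j \ell_j^2$ via Cauchy--Schwarz or Jensen on the arcs that actually carry zeros, one arrives at $n \cdot \max_j \ell_j \gtrsim 1$ or more precisely $\sum_j \ell_j^2 \gtrsim 1/n$. The cleanest route is probably: each arc $I_j$ on which $u<\log t$ must contain at least one zero (else $u$ is harmonic and nonnegative-ish on a larger disc touching $I_j$, contradiction via the mean value property / Remark~\ref{NPSpolynom}-type argument), and near that zero $|g_n(e^{i\theta})| \leq t$ forces $\ell_j \geq c(t)/(\text{max derivative scale})$; Bernstein's inequality $\|g_n'\|_\T \lesssim n \|g_n\|_\T$ is too weak pointwise, so instead one localizes: $|g_n(e^{i\theta})|\le t$ on all of $I_j$ together with $|g_n|$ attaining value $\le t$ forces, by comparing with the factor $|z - e^{i\alpha}|$ for the relevant zero $e^{i\alpha}\in I_j$ and bounding the remaining factors, a lower bound $\ell_j^2 \gtrsim (\text{local mass})/n$; summing gives \eqref{diambd}.

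I would organize the write-up as: (1) reduce to showing each component arc contains a zero and carries a definite amount of ``integrated negativity'' of $u=\log|g_n|$; (2) invoke Wagner's lemma \cite{Wagner88} (or reproduce its short argument) giving, for the arc $I_j$ with its $m_j$ enclosed zeros, the bound $\ell_j \geq c(t)\, m_j/n$ coming from $\int_{I_j}\log|g_n| \geq -C m_j$ against the forced value $\leq \ell_j \log t$; wait — that gives $\ell_j \lesssim m_j/(n\log(1/t))$ the wrong way, so the correct extraction is the reverse pairing, namely $\sum \ell_j^2 \geq \frac{1}{n}(\sum_j \ell_j \cdot (\ell_j n))$ and one shows $\ell_j \cdot n \gtrsim m_j$, i.e. $\ell_j \gtrsim m_j/n$, then $\sum_j \ell_j^2 \gtrsim \frac{1}{n}\sum_j m_j \ell_j \gtrsim \frac{1}{n}\cdot\frac{1}{n}\sum_j m_j^2 \geq \frac{1}{n^2}\cdot\frac{(\sum m_j)^2}{k}$ — this still needs $k \lesssim n$, which holds since $k \leq \nu(\T,g_n)/2 \leq n$ by Remark~\ref{NPSpolynom}; combined with $\sum_j m_j = n$ this yields $\sum_j \ell_j^2 \gtrsim \frac{1}{n^2}\cdot\frac{n^2}{n} = \frac 1n$. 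The main obstacle, and the step I would be most careful about, is the pointwise/localized lower bound $\ell_j \gtrsim m_j/n$: it requires showing that an arc on which $|g_n|<t$ and which ``absorbs'' $m_j$ zeros of $g_n$ cannot be shorter than $\sim m_j/n$, which is precisely the content of (the proof of) Wagner's result \cite{Wagner88}, and I would either cite it directly or reproduce the short potential-theoretic computation: write $\log|g_n(e^{i\theta})| = \sum_k \log|e^{i\theta}-e^{i\alpha_k}|$, split into zeros inside and outside a fixed enlargement of $I_j$, bound the inside sum below by $-m_j \log(C/\ell_j)$ using spacing, bound the average of the outside sum using $u(0)=0$ and harmonicity, and compare with the requirement $\log|g_n|<\log t$ on $I_j$.
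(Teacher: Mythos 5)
Your plan ultimately rests on the pointwise claim that each component arc satisfies $\ell_j \gtrsim m_j/n$, where $m_j$ is the number of zeros it absorbs; you flag this as the delicate step and assert it is ``precisely the content of (the proof of) Wagner's result.'' That claim is false, and it is not what Wagner proves. Take $g_n(z)=(z-1)^m(z+1)^{n-m}$ with $m$ fixed (say $m=1$) and $t\in(0,1)$ fixed: near $z=1$ one has $|g_n(e^{i\theta})|\approx 2^{\,n-m}|e^{i\theta}-1|^m$, so the component arc of $\{|g_n|<t\}\cap\T$ containing the zero $z=1$ has length of order $\bigl(t\,2^{-(n-m)}\bigr)^{1/m}$, exponentially small in $n$ --- nowhere near $m_j/n$. (The lemma still holds for this example because the arc around $z=-1$ has length of order one, but that rescue comes from the global bookkeeping, not from any per-arc bound.) What Wagner actually establishes is the \emph{weighted-sum} inequality $\sum_j \alpha_j\ell_j^{*}\ge \frac{1}{4n}\sum_j\alpha_j^{2}$ for an auxiliary polynomial whose zeros are collapsed to one point of multiplicity $\alpha_j$ per arc, with $\Lambda_{g_n^{*}}(t)\cap\T\subset\Lambda_{g_n}(t)\cap\T$; this cannot be localized to individual arcs, as the example shows. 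Consequently your final chain $\sum_j\ell_j^2\ge\frac1n\sum_j m_j\ell_j\ge\frac{1}{n^2}\sum_j m_j^2\ge\cdots$ collapses at the first inequality, and if you try to substitute the summed inequality naively (e.g.\ via $\sum_j m_j\ell_j\le(\sum_j m_j^2)^{1/2}(\sum_j\ell_j^2)^{1/2}$ with the crude bound $\sum_j m_j^2\le n^2$) you only reach $\sum_j\ell_j^2\gtrsim 1/n^{2}$, which is too weak.

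The repair is exactly the step the paper takes: square Wagner's displayed inequality and apply Cauchy--Schwarz to the paired sum, $\bigl(\sum_j\alpha_j\ell_j^{*}\bigr)^{2}\le\sum_j\alpha_j^{2}\sum_j(\ell_j^{*})^{2}$, so that one factor of $\sum_j\alpha_j^{2}$ cancels and one gets $\sum_j(\ell_j^{*})^{2}\ge\frac{1}{16n^{2}}\sum_j\alpha_j^{2}\ge\frac{1}{16n}$ (using $\alpha_j\ge1$, $\sum_j\alpha_j=n$), and then $\sum_j\ell_j^{2}\ge\sum_j(\ell_j^{*})^{2}$ from the inclusion of the auxiliary arcs. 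Note also that your auxiliary assertions --- that every component arc of $\{|g_n|<t\}\cap\T$ must contain a zero, and that the zeros outside an arc contribute only $O(1)$ to $\log|g_n|$ there --- are respectively unjustified and, as the same example shows, badly wrong (the outside factor can be as large as $2^{n-m}$), so the potential-theoretic sketch in your second paragraph would need substantial reworking; the clean route is to quote Wagner's summed inequality and perform the Cauchy--Schwarz cancellation as above.
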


\begin{proof}[Proof of Lemma]

Our starting point is the last displayed equation on page 332 in \cite{Wagner88}, which states that
\ben\label{eq:WagnerAux}
\sum_{j=1}^m \alpha_j \ell_j^* \geq \frac{1}{4n} \sum_{j=1}^m \alpha_j^2.
\een
Here $\ell_j^*$ denote the lengths of the component arcs associated to an auxiliary polynomial $g_n^{*}$ that has a single zero of multiplicity $\alpha_j$ in each of the component arcs of $\Lambda_{g_n^{*}}(t)\cap\T$, and such that $\Lambda_{g_n^{*}}(t)\cap\T\subset \Lambda_{g_n}(t)\cap\T$.
We note that the latter condition implies
\ben\label{eq:arcs}
\sum_{j=1}^k \ell_j^2 \geq \sum_{j=1}^m (\ell_j^*)^2.
\een

Squaring both sides of \eqref{eq:WagnerAux} and applying the Cauchy-Schwarz inequality to the expression on the left hand side, we obtain 
$$
\sum_{j=1}^m \alpha_j^2 \sum_{j=1}^m (\ell_j^*)^2 \geq \frac{1}{16n^2} \left( \sum_{j=1}^m \alpha_j^2\right)^2.
$$
Simplifying and then using $\sum_{j=1}^m \alpha_j^2 \geq n$ gives
\ba
\sum_{j=1}^m (\ell_j^*)^2 &\geq \frac{1}{16n^2}  \sum_{j=1}^m \alpha_j^2 \\
&\geq \frac{1}{16n}.
\ea
In light of \eqref{eq:arcs} this implies \eqref{diambd} and completes the proof of the lemma.
\end{proof}

Now we proceed to the proof of \eqref{eq:TimprovedLB}.
Let $p_n$ be any polynomial in $\mathcal{P}_n(\T)$.  Let $T_1, T_2,..., T_k$, be the components of $\Lambda_{p_n}(t) \cap \T$ with arc lengths $\ell_1, \ell_2, ..., \ell_k$ respectively. Note that $k\leq n,$ since the lemniscate of a degree-$n$ polynomial can intersect the circle in at most $2n$ points counted with multiplicity.

For $j\in[1, k],$ let $S_j$ be the sector such that $T_j = S_j\cap\T$.
For each $j$, we place $2n$ disjoint discs each centered at points on $\partial\Lambda_{p_ n}(t)\cap S_j$ as follows. First partition $T_j$ into $2n$ equal subarcs and take the central point in each subarc.  Then from each of these points take the ray directed away from the origin and select the  point with maximal modulus in the intersection of that ray with the closure of $\partial\Lambda_{p_n}(t)$.  Since moving points in these outward radial directions can only increase the distance between them, the new collection of points are mutually separated by at least $\frac{\ell_j}{4n}$, and we can choose disjoint discs of radius $\frac{\ell_j}{8n}$ centered at each point.  Since $p_n$ has only $n$ zeros, at least $n$ of the $2n$ discs are free of zeros.  We select these zero-free discs.
Applying Theorem \ref{NPS2} (with $h = \log\vert \frac{p_n}{t}\vert$), we have that the area of the intersection of $\{|p_n|<t\} $ with one of these discs is at least $\frac{c}{\log n} \pi \left( \frac{\ell_j}{8n}\right)^2 = \frac{\tilde{C}}{n^2 \log n} (\ell_j)^2$, and summing these estimates, first over the $n$ discs and then across all $j=1,2,...,k$, we have 
\[m\left(\Lambda_{p_n}(t)\right)\geq\frac{\tilde{C}}{n \log n}\sum_{j=1}^{k}\ell_j^2.\]
Combining this with Lemma \ref{lem:Wagner} immediately gives the desired lower bound.  This concludes the proof of \eqref{eq:TimprovedLB}.

For $t\in (0, 1)$ we believe that the sharp lower bound for area of $\Lambda_{p_n}(t)$ is of order $\frac{1}{n}.$ We give some evidence to support this conjecture by considering two extreme cases.

\begin{proposition}\label{prop:evidence}
Fix $t \in (0,1)$ and consider a sequence $p_n\in\mathcal{P}_n(\overline{\D})$. 
\begin{enumerate}[(a)]
\item If $\Lambda_{p_n}(t)$ is connected, then $m\left(\Lambda_{p_n}(t)\right)\geq\frac{c(t)}{n\log n}$.

\item If $\Lambda_{p_n}(t)$ has $n$ components, then $m\left(\Lambda_{p_n}(t)\right)\geq\frac{c(t)}{n}.$
\end{enumerate}
  
\end{proposition}

\begin{proof}[Proof of $(a)$]
We may assume that $m(\Lambda_{p_n}(t))\to 0$ as $n\to\infty$. In this case, note that by equidistribution of the zeros, c.f. Theorem \ref{zero distribution}, the diameter of $\Lambda_{p_n}(t)$ is at least $1.$ Now as in the above proof we construct $2n$ disjoint discs, each of radius $\frac{1}{4n}$ with centers on $\partial\Lambda_{p_n}(t)$. There are at least $n$ of these discs which are zero-free. Applying Theorem \ref{NPS2} as before, we obtain a lower bound of 
\[m\left(\Lambda_{p_n}(t)\right)\geq n \frac{c(t)}{n^2 \log n}= \frac{c(t)}{n \log n}. \]
This concludes the proof of (a).
\end{proof}

\begin{proof}[Proof of $(b)$.] Suppose that $\Lambda_{p_n}(t)$ has $n$ components. We will use the following two facts.

\noindent \textbf{Fact 1:} $\prod_{j=1}^{n}|p_n'(X_j)|\leq n^n$. Proof: Expand out the LHS writing it as a Vandermonde determinant and use Hadamard's inequality. 

\vspace{0.1in}

\noindent Let $U_j$, $1\leq j\leq n,$ be the components of $\Lambda_{p_n}(t)$.  Then $p_n$ restricted to each $U_j$ is a conformal map. Let $r_j = dist (X_j, \partial U_j).$ Then

\vspace{0.1in}

\noindent \textbf{Fact 2:} $\prod_{j=1}^{n}r_j\geq\frac{t^n}{(4n)^n}$

\noindent Proof of Fact $2$: Since $p_n$ is conformal on $U_j$ and maps $U_j$ onto $t\D,$ we have by Koebe's $\frac{1}{4}$ distortion inequality that $\frac{t}{4r_j}\leq |p_n'(X_j)|$. Combining these estimates for each $j$ along with the bound in Fact $1$, we may write

\begin{equation}\label{GM}
\prod_{j=1}^{n}r_j\geq\frac{t^n}{(4n)^n}.    
\end{equation}

\noindent With these facts, it is easy to finish the proof. Applying the AM-GM inequality to the $\{r_j^2\}$ and using Fact 2, we obtain

\[m(\Lambda_{p_n}(t))\geq \pi\sum_{j=1}^{n}r_j^2\geq \pi n\left(\prod_{j=1}^{n}r_j^2\right)^{\frac{1}{n}}\geq \pi n\frac{t^2}{16 n^2}= \frac{\pi t^2}{16n}.\]
This concludes the proof.
\end{proof}

\subsection{Possible strategies towards a $\frac{1}{n}$ lower bound}

Here we list two approaches which may lead to sharp bounds of order $\frac{1}{n}$ for $\frac{1}{2}-$level minimizers.
\begin{enumerate}
\item Define a zero $X$ to be \emph{M- good} if

\[\sup_{B\left(X, \frac{1}{100n}\right)} |f_n'(z)|\leq M n.\]   

\noindent If $X$ is \emph{M- good}, then notice that by the Bernstein bound we have $B\left(X, \frac{1}{200n}\right)\subset \Lambda_{f_n}(\frac{1}{2}).$  If we can show that there exist $M, \alpha> 0$ such that at least $\alpha n$ of the zeros are \emph{M- good}, then we would obtain the desired lower bound.  

\item A second strategy is to show that there for each $n\in\N$, there exists a minimizer $f_n$, all of whose critical values are greater or equal to $ \frac{1}{2}$, perhaps using an approach related to Eremenko and Hayman \cite[proof of Lemma 5]{EremHay}.
\end{enumerate} 

\subsection{Examples}

We list here some examples of polynomials in $\mathcal{P}_n(\T)$ all of which have completely disconnected lemniscates, thus having an area lower bound of order $\frac{1}{n}.$

\begin{enumerate}
\item Consider $p_n(z) = z^n - 1$.Then $p_n$ are uniformly bounded on $\T.$ The zeros of $p_n$ are the $n$th roots of unity $\omega_j$. Note that $p_n$ has a critical point of multiplicity $n-1$ at $z=0.$ All the critical values of $p_n$ are $1$ which is larger than the level $\frac{1}{2}$ in absolute value. It follows that $\Lambda_{p_n}(\frac{1}{2})$ has $n$ connected components.

\vspace{0.1in}

\item A related example is the sequence $q_n(z) = \frac{z^n - 1}{z - 1}.$ The zeros of $q_n$ are the $n$th roots of unity $\omega_j$, except $1.$ It can be shown by direct polynomial algebra that all the critical values of $q_n$ are strictly bigger than $\frac{1}{2}$ in absolute value. This forces once again $\Lambda_{q_n}(\frac{1}{2})$ to have $n$ components. Two features of this example to keep in mind: In contrast to Example $1$, the critical points of $q_n$ equidistribute on $\T,$ and second $||q_n||_{\T} = n.$ So while uniform boundedness on the whole disc will certainly yield at least order $\frac{1}{n}$ area, it is not necessary.

\vspace{0.1in}

\item Our last example is a further modification of Example $2.$ Let $t_n(z) = q_n(z^{\log n}).$ Then $t_n$ is monic, has degree $n\log n$. The zeros of $t_n$ are the preimages of $\{\omega_j\}_{j=1}^{n-1}$ under the map $z^{\log n}.$ Differentiating, we get
\[t_n' (z) = q_n' (z^{\log n})(\log n) z^{\log n -1}\]
From here it is easy to check that all the critical values of $t_n$ are bigger than $\frac{1}{2}.$ The feature of $t_n$ we want to keep in mind is that $||t_n|| = n,$ and simultaneously, $t_n$ is unbounded on every arc of $\T$ which has positive length. 

\end{enumerate}

\subsection{Close to $1$-lemniscates}

We can use the area lower bound for levels $t>1$ to get closer to the area of $1$-lemniscates. Indeed, let $p_n, q_n \in\mathscr{P}_n(\overline{\D})$ be $1-$level minimizers and $2-$level minimizers respectively. Let $\alpha>0,$ and $l_n = e^{(\log n)^{\alpha}}$. Then, we have

\ba
m\left(|p_n| < 1 + \frac{\log 4}{l_n}\right)&= m\left( |p_n|^{l_n} < \left(1 + \frac{\log 4}{l_n}\right)^{l_n}\right)\\
&\geq m\left( |p_n|^{l_n} < 2\right) \\
&\geq m\left(|q_{nl_{n}}| < 2 \right)\\
&\geq \frac{c_1}{\log \log (nl_n)} \\
&\geq \frac{c}{\log \log n}.
\ea
Here the third displayed line follows because $p_n^{l_n}\in \PP_{nl_n}(\overline{\D})$, and by assumption $q_{nl_n}$ is a $2-$level minimizer for that class. The fourth line follows from using the lower bound from Theorem \ref{thm:arealevelmorethan1}.

\begin{remark}
Let  $\Lambda_n'=\{1<|p|<1+\frac{\log 4}{l_n}\}$, with $l_n$ as above.  If it could be shown that $|\Lambda_n'|\ll \frac{1}{\log \log n}$, then it would follow (from the estimate above) that $m(\Lambda_n)\ge m(\Lambda_n(2))-m(\Lambda_n') \gtrsim \frac{1}{\log \log n}$.  This would resolve the sharp order of decay for the minimal area problem at level $t=1$.


\end{remark}



\section{Equidistribution and the zero-pushing lemma}

Returning to Theorem \ref{thm:arealevel1}, so far we have provided the proofs of estimates for $\kappa_n(\T,t)$. In this section, we finish the proof by establishing the desired comparison of constraints
\ben\label{eq:key}
\frac{1}{3}\kappa_{n(\log n)^4}(\T,t) \leq \kappa_n(\overline{\D},t).
\een
The proof of this estimate (provided in Section \ref{sec:key} below) will use two main ingredients: the zero-pushing lemma (Lemma \ref{pushzeros} below) used to replace the minimizer for the disc case with a comparable polynomial having zeros on the circle (with nonfatal increase in the degree), and an estimate (Lemma \ref{area in vs out} below) for the portion of the lemniscate outside the disc in terms of the portion inside. The latter relies on equidistribution of zeros for the minimizer (Theorem \ref{zero distribution}), a property of independent interest that we now establish.

\subsection{Equidistribution: proof of Theorem~\ref{zero distribution}}
Here we give two proofs that the zeros of the lemniscate-area-minimizing polynomials equidistribute on the unit circle. The proofs do not actually use the optimality of the polynomials, only that the lemniscate areas vanish in the limit.

The first proof is a compactness argument using potential theory, and also applies to other situations (e.g., equidistribution of zeros of polynomials minimizing the inradius). The second proof restricts to the case when the zeros are on the circle, but the end result is more quantitative.

\begin{proof} We assume to the contrary that some subsequence of the empirical measures $\mu_{n_j}\overset{w}{\to}\mu,$ where $\mu$ is a probability measure supported in $K$ but $\mu\neq\nu_K.$ Remembering that $cap(K) = 1$, we have by the energy minimizing property of $\nu_K$, c.f., \cite{ransford}, that 
\begin{equation}\label{energynegative}
\int_K U_{\mu}(z)d\mu(z)=\int_K\int_K \log|z-w|d\mu(w)d\mu(z) < 0.    
\end{equation}

Using \eqref{energynegative}, we conclude that there exists $z_0\in\supp(\mu)$ with $U_{\mu}(z_0) < 0.$ From here, using that $U_{\mu}$ is a subharmonic function, we deduce that there is a ball $B(z_0, r)$ in which $U_{\mu} < 0$. Subharmonic functions assume their maximum on any compact set, so for instance we have 
\begin{equation}\label{negativeinB}
\sup_{B(z_0, \frac{r}{2})}U_{\mu}\leq - c_1 < 0.    
\end{equation} To finish the proof we need the following lemma. 

\begin{lemma}\label{pottheory lemma1}
    Let $K\subset\C$ be compact. Let $\mu_n \in \mathcal{P}(K)$ be a sequence such that $\mu_n$ converges to $\mu$ in the weak* topology.
    Then for any compact set $F$ we have,
    \begin{align*}
       \underset{n \to \infty}{\overline{\lim}}  \sup_{F} U_{\mu_n} \leq \sup_{F} U_{\mu}.
    \end{align*}
   
\end{lemma}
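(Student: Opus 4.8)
\textbf{Proof proposal for Lemma \ref{pottheory lemma1}.} The plan is to prove the asserted upper semicontinuity of $z\mapsto U_{\mu_n}(z)$, uniformly on compacts, by a truncation argument that tames the $-\infty$ singularity of $\log|z-w|$ and then a Dini-type passage to the limit in the truncation parameter. Fix the compact set $F$, and for $M>0$ set $\phi_M(z,w)=\max\{\log|z-w|,-M\}$. The key structural observations are: (i) $\log|z-w|\le \phi_M(z,w)$ for all $z,w$, so
\[
U_{\mu_n}(z)\ \le\ \int \phi_M(z,w)\,d\mu_n(w)\ =:\ g^{(n)}_M(z)\qquad\text{for all }z,\ n,\ M;
\]
and (ii) $\phi_M$ is \emph{continuous} on $\C\times\C$ and \emph{bounded} on the compact set $F\times K$ (bounded above by $\log(\operatorname{diam}(F\cup K))$ and below by $-M$). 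So the difficult part of the integrand, namely the blow-up of $\log$, has been replaced by a harmless bounded continuous function at the cost of only increasing the integral.

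Next I would fix $M$ and show $g^{(n)}_M\to g_M:=\int\phi_M(\cdot,w)\,d\mu(w)$ uniformly on $F$. Since $\phi_M$ is uniformly continuous on $F\times K$, the family $\{g^{(n)}_M\}_n$ (and $g_M$) is equicontinuous and uniformly bounded on $F$; covering $F$ by finitely many balls centered at points $z_1,\dots,z_k$, weak-$*$ convergence $\mu_n\overset{w}{\to}\mu$ tested against each $\phi_M(z_i,\cdot)\in C(K)$ gives $g^{(n)}_M(z_i)\to g_M(z_i)$, and the standard $\varepsilon/3$ argument then upgrades this to uniform convergence on $F$. Consequently $\sup_F g^{(n)}_M\to \sup_F g_M$, and combining with (i),
\[
\limsup_{n\to\infty}\ \sup_F U_{\mu_n}\ \le\ \limsup_{n\to\infty}\ \sup_F g^{(n)}_M\ =\ \sup_F g_M\qquad\text{for every }M>0.
\]

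Finally, I would let $M\to\infty$. By monotone convergence $g_M(z)\downarrow U_\mu(z)$ pointwise on $F$, and each $g_M$ is continuous while $U_\mu$ is upper semicontinuous. A Dini-type covering argument then shows $\inf_{M}\sup_F g_M=\sup_F U_\mu$: given $\varepsilon>0$ and writing $a=\sup_F U_\mu$, for each $z\in F$ choose $M_z$ with $g_{M_z}(z)<a+\varepsilon$, use continuity of $g_{M_z}$ to extend this to a neighborhood of $z$, pass to a finite subcover, and take $M^\ast$ the maximum of the finitely many $M_z$'s; monotonicity in $M$ yields $\sup_F g_{M^\ast}\le a+\varepsilon$. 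Taking the infimum over $M$ in the displayed inequality gives $\limsup_n\sup_F U_{\mu_n}\le \inf_M\sup_F g_M=\sup_F U_\mu$, which is the claim.

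The two places that need care — and which I would flag as the main obstacles — are (a) the uniformity over $F$ for fixed $M$, which is why the truncation to a continuous bounded $\phi_M$ (rather than working directly with $\log|\cdot-\cdot|$) is essential, and (b) the limit $M\to\infty$, where one cannot simply interchange $\inf_M$ with $\sup_F$ by a naive uniform-convergence argument because $U_\mu$ need not be continuous; the monotonicity of $M\mapsto g_M$ together with continuity of each $g_M$ is exactly what makes the Dini-type compactness argument go through.
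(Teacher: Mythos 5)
Your proof is correct, and it takes a genuinely different route from the paper's. The paper exploits the fact that $U_{\mu_n}$ is subharmonic, so $\sup_F U_{\mu_n}$ is attained at some $z_n\in F$; it then argues by contradiction, extracts a convergent subsequence $z_{n_j}\to\tilde z\in F$, and invokes the \emph{principle of descent} (\cite[Theorem 6.8]{SaffTotikLogpotential}), which says that $\limsup_n U_{\mu_n}(w_n)\le U_\mu(w^\ast)$ whenever $w_n\to w^\ast$. Your argument instead works directly: truncating the kernel to $\phi_M=\max\{\log|\cdot-\cdot|,-M\}$, using weak-$*$ convergence plus equicontinuity to get uniform convergence of the truncated potentials on $F$ for each fixed $M$, and then removing $M$ via a Dini-type compactness argument that uses only the continuity of each $g_M$ and the monotone decrease $g_M\downarrow U_\mu$. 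In effect you have reproved, in the exact generality needed, the ingredient the paper cites as a black box — the truncation idea you use is the standard mechanism behind the principle of descent. The paper's route is shorter since it leans on a named theorem and on subharmonicity; yours is more self-contained and slightly more elementary (no subharmonicity, no extraction of maximizers, no contradiction), and it would transfer verbatim to other upper semicontinuous kernels that are bounded above on compacts and admit a decreasing sequence of continuous truncations. Both are sound; the choice is a matter of what one is willing to take as given.
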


This lemma is probably well known and is also proved in \cite{KoushikSubhajit}, but for completeness we prove it below. For a moment, assume the lemma holds. We show how to finish the proof of the Theorem. Indeed, applying Lemma \ref{pottheory lemma1} (with  $F= \overline {B(z_0, \frac{r}{2})}$) in conjunction with the estimate \eqref{negativeinB}, and remembering that $U_{\mu_n}(z) = \frac{1}{n}\log|p_{n}(z)|,$ we get that $|p_{n_j}|\leq\exp(-c_1n_j)$ everywhere on the ball $B(z_0, \frac{r}{2}).$ In other words,
\[B\left(z_0, \frac{r}{2}\right)\subset\{|p_{n_j}| < \exp(-c_1n_j)\}\subset\{|p_{n_j}| < t\},\]

\noindent for any fixed $t>0$, and all $j$ large enough. The above displayed inclusion shows that the area of the $t$-lemniscates along this subsequence is at least $\pi \frac{r^2}{4}$, and hence in particular do not approach zero. This contradicts the hypothesis in the statement and concludes the proof of the Theorem.
\end{proof}

We are now ready to prove Lemma \ref{pottheory lemma1}. 

\begin{proof}[\textbf{Proof of Lemma \ref{pottheory lemma1}}]
    Since $ U_{\mu_n}$ is a subharmonic function, its supremum on $F$ is attained at a point $z_n \in F.$ That is, 
    \begin{align}
        \sup_{F} U_{\mu_n} =  U_{\mu_n}(z_n), \quad  \forall n \in \N
    \end{align}
    Now suppose to the contrary that $\limsup_{n\to\infty}U_{\mu_n}(z_n) > \sup_{F}U_{\mu}.$ Then along a subsequence $n_j$ we will have
    \begin{equation}\label{descent}
    U_{\mu_{n_j}}(z_{n_j}) > \sup_{F}U_{\mu} +\eps.   
    \end{equation} 
    for some $\eps > 0.$ Since $z_{n_j}$ all lie in the compact set $F$,  we can get a further subsequence (which for notational ease, we assume to be just $n_j$), and a point $\tilde z \in F$ such that $ z_{n_j} \to \tilde z $. Next, we use the principle of descent (c.f. \cite[Theorem $6.8$]{SaffTotikLogpotential}) which states that if a sequence of probability measures $\{\mu_n\}_n$, all having support in a fixed compact set, converge to a measure $\mu$ in the $\mbox{weak}^{*}$ topology, then  we have 
    \begin{align*}
        \underset{n \to \infty }{\overline{\lim}} U_{\mu_{n}}(w_{n}) \leq  U_{\mu}(w^{*}),
    \end{align*}
     whenever $w_n\in\C$ with $w_n\to w^{*}.$ Applying the aforementioned result with $w_n=z_n$, and $w^* = \tilde z,$ we obtain $\limsup_{j\to\infty}U_{\mu_{n_j}}(z_{n_j})\leq U_{\mu}(\tilde z).$ Combining this with the estimate \eqref{descent} we arrive at 
     
     \[\sup_{F}U_{\mu} +\eps\leq \limsup_{j\to\infty}U_{\mu_{n_j}}(z_{n_j})\leq U_{\mu}(\tilde z).\]
\noindent Since $\tilde z\in F,$ this is absurd. Hence the proof of the lemma is complete. 
\end{proof}

Now we give an alternate proof of the equidistribution of zeros of area minimizers when $K=\overline{\D}$, but now it is assumed that the zeros are on the unit circle. The advantage of this proof is that it gives quantitative estimate of the discrepancy.

\begin{proof}[Proof of Equidistribution assuming zeros on the unit circle]
Let $f(z)=(z-w_1)\ldots (z-w_n)$ where $|w_i|=1$ for each $i$. Let $u=\frac{1}{n}\log |f|$ and without loss of generality, let $\mu=\max\limits_{\overline{\D}}u=u(1)$. For $0<r<1$, we have
\ba
\frac{|f(r)|}{e^{n\mu}}= \prod_{k=1}^n\frac{|r-w_k|}{|1-w_k|} \ge \l(\frac{1+r}{2}\r)^n
\ea
as $|r-w|/|1-w|$ is minimized when $w=-1$. Thus $u(r)\ge \mu+\log\frac{1+r}{2}$. We shall choose $r=r_{\mu}\vee \frac12$, where $\log\frac{1+r_{\mu}}{2}=-\frac{\mu}{2}$. In particular, we always have $u(r)\ge \frac{\mu}{2}>0$.

As $u$ is harmonic on $\D$, we have the standard estimate\footnote{In general, if $u$ is harmonic on $\Omega\subseteq \R^k$, then $\|\nabla u(x)\|\le \frac{k}{d(x,\partial \Omega)}\|u\|_{\Omega}$.}  $\|\nabla u\|\le \frac{2\mu}{1-r}$ on $\D(0,r)$. Therefore, $u\ge \frac12 u(r)$ on $\D(r,\delta)\cap \D(0,r)$, where $\delta=\frac{(1-r)u(r)}{4\mu}$. Consequently (as $u(r)>0$), we have 
\ba
\int_{r\D}u_+ \ge \frac{u(r)}{2}\times \frac{\pi}{4}\delta^2\ge  \frac{(1-r)^2u^3(r)}{64\mu^2}.
\ea
Here we used the fact that $\delta<\frac{1-r}{4}<r$ (as $r\ge \frac12$), hence at least a quarter of $\D(r,\delta)$ lies within $\D(0,r)$.

Since $u$ is harmonic in $\D$ and $u(0) = 0$, its integral on $r\D$ must be zero, hence $\int_{r\D} u_-=\int_{r\D} u_+$. Writing $b=m(\{u<0\}\cap r\D)$ and observing that $u\ge \log(1-r)$ everywhere on $r\D$ (because $|z-w_k|\ge 1-r$ for any $k$ and any $z\in r\D$), we see that 
\ba
b\log\frac{1}{1-r}\ge  \frac{(1-r)^2u^3(r)}{64\mu^2}.
\ea

\noindent\textbf {Case $1$}: $r=r_{\mu}$.  Then $u(r)\ge \frac12 \mu$, and $1-r=2(1-e^{-\frac{\mu}{2}})\in [ \frac{\mu}{4},\frac{\mu}{2}]$. Therefore, $b\ge \frac{\mu^3}{2^{13}\log(4/\mu)}$, which implies that $\mu\lesssim \sqrt[3]{b\log\frac{1}{b}}$.

\noindent\textbf {Case $2$}: $r=\frac12$. Then we still have $u(r)\ge \frac12 \mu$ (as $r\ge r_{\mu}$) and hence $b\ge \frac{\mu}{2^{8}\log 2}$,  which implies that $\mu\lesssim b$.

\medskip
Thus in all cases, $\mu\lesssim \sqrt[3]{b\log \frac{1}{b}}$.

\medskip
\noindent Now we apply the well known Erd\"{o}s-Turan inequality (recall that $q$ is monic and $q(0)=1$) to obtain
\ba
\sup_{0\le \alpha<\beta\le 2\pi}\Big|\frac{1}{n}|\{k\suchthat \alpha <\arg w_k<\beta\}| - \frac{\bet-\alp}{2\pi} \Big|\lesssim \sqrt{\mu} \lesssim \sqrt[6]{b\log\frac{1}{b}}. 
\ea
In particular, if $p_n$ is a sequence for which $m(\Lambda_{p_n})\to 0$, then the zeros of $p_n$ are asymptotically equidistributed on the  unit circle.

\smallskip
\noindent{\textbf Remark}: If we instead use the fact that circle averages of $u$ are zero, for circles $s\mathbb T$ with $r-\frac34 \delta<s<r-\frac14 \delta$ to get a lower bound for the integral of $u_-$ on $s\mathbb T$ and then integrate over $s$, we seem to get the slightly better bound of $\sqrt[5]{b\log\frac{1}{b}}$.
\end{proof}

\subsection{The zero-pushing lemma}

As mentioned in the overview in Section \ref{sec:overview}, we state two slightly different versions of the zero-pushing lemma (Lemmas \ref{pushzeros} and \ref{lem:probabilistic} below) based on two methods of proof, with the proof of Lemma \ref{lem:probabilistic} being probabilistic.
We discuss the relative merits of the two proofs in Remark \ref{rmk:merits} below.  

\begin{lemma}[{Zero-pushing lemma}]\label{pushzeros}
Let $p\in\PP_n(\overline{\D})$ have at least one zero in $\D.$ Let $\eps > 0,$ and $L = [{\frac{6}{\eps^2}}].$ There exists a monic polynomial $q$ of degree $L n$ with all its zeros on the unit circle $\T$, and which satisfies
\begin{equation}\label{smallArea}
\log|p(z)|\leq \frac{1}{L}\log|q(z)|, \hspace{0.05in} \forall z\in (1-\eps)\D.
\end{equation}
\end{lemma}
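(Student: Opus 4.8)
The plan is to replace each zero of $p$ individually, so it suffices to prove the following one-zero statement: for each $w\in\overline{\D}$ there exist points $z_1,\dots,z_L\in\T$ such that $\log|z-w|\le\frac{1}{L}\sum_{j=1}^L\log|z-z_j|$ for all $z\in(1-\eps)\D$. Once this is known, if $p(z)=\prod_{k=1}^n(z-w_k)$ we simply form $q(z)=\prod_{k=1}^n\prod_{j=1}^L(z-z_{k,j})$, a monic polynomial of degree $Ln$ with all zeros on $\T$, and summing the $n$ inequalities gives \eqref{smallArea}. The two cases are genuinely different: if $|w|\ge 1-\eps$ (a ``superficial'' zero) we may take all $z_j$ equal to $w/|w|$, since for $z\in(1-\eps)\D$ the point $z$ is farther from $w/|w|$ than from $w$ (the nearest point of $\T$ to $z$ is $z/|z|$, and one checks $|z-w|\le|z-w/|w||$ when $|z|\le|w|$; more simply, $|z-w/|w||\ge 1-|z| \ge |z-w|$ need not hold, so instead argue directly that radial projection of a zero outward never decreases its distance to points of strictly smaller modulus). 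For $|w|<1-\eps$ we need the genuine averaging construction.

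For a deep zero $w$ with $|w|=\rho<1-\eps$, the natural choice is to spread the $z_j$ as the $L$-th roots of $w$ suitably scaled to $\T$: put $z_j=e^{2\pi i j/L}\,\zeta$ where $\zeta^L = $ (a unit-modulus number aligned with $w$), so that $\prod_{j=1}^L(z-z_j)=z^L-w^*$ for some $w^*\in\T$. Then the desired inequality becomes
\[
L\log|z-w|\le \log|z^L-w^*|,\qquad z\in(1-\eps)\D.
\]
Equivalently $|z-w|^L\le|z^L-w^*|$. This is where the work lies: we want $|z^L - w^*|\ge |z-w|^L$, and since $|z|<1-\eps$ while $|w^*|=1$, the left side is bounded below by $1-|z|^L\ge 1-(1-\eps)^L$, which is bounded away from $0$; meanwhile $|z-w|^L\le(|z|+\rho)^L<(2(1-\eps))^L$, which is NOT small, so a crude bound is insufficient and one must exploit the matching of $w^*$ to $w$. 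The right approach is to factor $z^L-w^* = \prod_{j}(z-\omega^j \eta)$ with $\eta^L=w^*$ and to choose $w^*$ (hence $\eta$) so that $|\eta - w|$ is controlled; then compare $\prod_j|z-\omega^j\eta|$ with $|z-w|^L$ by noting that replacing the common modulus-$1$ factor $\eta$-rotates by all $L$-th roots of unity averages out the ``bad'' directions. Concretely, take logs and use subharmonicity: $\frac{1}{L}\sum_j\log|z-\omega^j\eta| = \frac{1}{L}\log|z^L - \eta^L|$, and one shows $\log|z^L-\eta^L|\ge L\log|z-w|$ by checking it on the boundary $|z|=1-\eps$ (where the harmonic-in-$z$ function $\log|z^L-\eta^L| - L\log|z-w|$, having no poles in $(1-\eps)\D$ once $|w|<1-\eps$ is used and $\eta\notin(1-\eps)\D$, attains its minimum on the circle $|z|=1-\eps$ by the minimum principle for harmonic functions—note both terms are harmonic in $z\in(1-\eps)\D$ since neither $w$ nor any $\omega^j\eta$ lies there), and on that circle estimating directly using $|\eta|=1$, $|w|<1-\eps$.

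The main obstacle is the boundary estimate: on $|z|=1-\eps$ we must show $|z^L-\eta^L|\ge|z-w|^L$ for an appropriate choice of $\eta\in\T$ depending on $w$. The leverage we have is the factor $L=[6/\eps^2]$, which is large when $\eps$ is small; the inequality should hold because $|z^L-\eta^L|$ is of order $1$ (bounded below by $1-(1-\eps)^L$, which for $L\sim 6/\eps^2$ is $\ge 1-e^{-6/\eps}$, essentially $1$), while $|z-w|\le |z-w/|w||\cdot(1+o(1))$ and $|z-w/|w||\le 1-|z|+\dots$ is small near the relevant part of the circle—so the competition is between something like $(C\eps)^L$ and a constant, which the constant wins. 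Making this quantitative, and in particular verifying that the choice $L=[6/\eps^2]$ (rather than, say, $L\sim 1/\eps$) is what is needed, will require carefully splitting the circle $|z|=1-\eps$ into the arc near $w/|w|$ (where $|z-w|$ is genuinely small, so even $|z-w|^1\le$ the target, let alone the $L$-th power) and the complementary arc (where $|z^L-\eta^L|$ must be bounded below and $|z-w|$ is bounded above by something like $\sqrt{\eps}$ after the split, so $|z-w|^L \le \eps^{L/2}\to 0$). I expect the constant $6$ emerges from this two-arc split together with the elementary estimate $1-(1-\eps)^L \ge$ const. Once the boundary estimate is in hand, the minimum principle closes the argument and summing over the $n$ zeros finishes the proof.
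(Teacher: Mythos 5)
There is a genuine gap: your choice of $z_1,\dots,z_L$ as the $L$-th roots of a single unimodular number $w^*$, i.e.\ $\prod_{j}(z-z_j)=z^L-w^*$, cannot work for general deep zeros. The required boundary estimate $|z^L-w^*|\ge|z-w|^L$ fails badly when $z$ lies near the antipode of $w$ on the circle $|z|=1-\eps$. Concretely, take $w=\rho\in(0,1-\eps)$ real with, say, $\rho=\tfrac12$, and $z=-(1-\eps)$. Then $|z-w|=1-\eps+\rho\approx\tfrac32$, so $|z-w|^L\approx(3/2)^L$, while for \emph{every} choice of $w^*\in\T$ one has $|z^L-w^*|\le 1+|z|^L<2$. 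Thus no alignment of $w^*$ with $w$ can rescue the inequality once $L\ge 2$; the product $\prod_j|z-z_j|$ is capped at $2$ no matter how you rotate the $L$-th roots. Your sketched arc-splitting on $|z|=1-\eps$ relies on the assertion that $|z-w|$ is bounded by roughly $\sqrt\eps$ on the arc away from $w/|w|$, but for a genuinely deep zero this is false — $|z-w|$ can be close to $2$ there — so that step does not close. (Your treatment of superficial zeros by radial projection is fine, and your reduction via the minimum principle for the superharmonic difference is essentially sound, though you misstate it slightly; the real problem is upstream in the choice of the $z_j$.)

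What the paper does instead is to place the replacement roots at $\xi_k=B_{-w}(\zeta_k)$, the images of the $L$-th roots of unity under the disc automorphism $B_{-w}(\zeta)=\frac{\zeta+w}{1+\bar w\zeta}$, rather than at $L$-th roots of a fixed number. This changes everything: the $\xi_k$ cluster near $w/|w|$ and thin out near the antipode, which exactly compensates for the growth of $|z-w|$ there. Algebraically, the identity
$\frac1L\sum_k\log|z-\xi_k|-\log|z-w|=\frac1L\log\frac{|(B_wz)^L-1|}{|1-(-\bar w)^L|\,|B_wz|^L}$
puts the hyperbolic distance $|B_wz|\le r_\eps\le e^{-\eps^2/2}$ in the denominator, so $|B_wz|^L$ decays like $e^{-L\eps^2/2}$ and the expression is nonnegative once $L\ge\frac{2\log 3}{\eps^2}$ — this is where the constant $6\approx 2\log 3\cdot e$ (really $2\log 3<6$) and the $\eps^{-2}$ scaling come from. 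Your special case $w=0$ (where $B_0$ is the identity) is the only case where your construction agrees with the paper's and does work; the Möbius conjugation is the missing ingredient for $w\ne 0$.
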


\bprf [Proof of Lemma \ref{pushzeros}] 
Let $\eps>0$ be given. Let $p(z) = \prod_{j=1}^{n}(z - z_j)$ be in the collection $\mathcal{P}_n(\overline{\D})$ with at least one $z_j\in\D.$ Let
\[Z_{\eps}^1 = \{z_j: |z_j|\leq (1-\eps)\}, \hspace{0.05in} Z_{\eps}^2=\{z_j: |z_j|> (1-\eps)\}.\]

\noindent Denote $a_n = |Z_{\eps}^1|.$ We split $p = p_1p_2,$ where $p_j$ are monic polynomials with zero set $Z_{\eps}^j$ respectively, for j=1, 2. Note that  with this $\deg (p_1) = a_n$ and $\deg(p_2) = n-a_n.$

\vspace{0.1in}

\noindent Let $\alpha$ be a fixed zero of $p_2$. Since $|\alpha| > 1-\eps,$ it readily follows that
\[\log |z- \alpha|\leq \log \left|z - \frac{\alpha}{|\alpha|}\right|, \hspace{0.05in} z\in (1-\eps)\D. \]
Thus pushing each zero of $p_2$ radially to $\T,$ we get a monic polynomial $q_2$ of degree $n-a_n$ with all its zeros in $\T,$ such that
\begin{equation}\label{p_2}
\log |p_2(z)|\leq \log|q_2(z)| \hspace{0.05in}, \forall z\in (1-\eps)\D.    
\end{equation}

Next we deal with the zeros of $p_1.$ Let $|w|\le 1-\eps$ be a fixed zero of $p_1.$ Our goal is to find $L$ points $\xi_1,\ldots ,\xi_L$ on $\T$ so that $\frac{1}{L}\sum_{k=1}^L\log|z-\xi_k|\ge \log |z-w|$ whenever $|z|\le 1-\eps$. The points $\{\xi_k\}$ are allowed to depend on $w$, but the number $L$ should be the same for all $w\in Z_{\eps}^1.$

If $w=0$, we may take $\zeta_j=e^{2\pi i j/L}$, $1\le j\le L$. Then
\ba
\frac{1}{L}\sum_{k=1}^L\log|z-\zeta_k|- \log |z|&= \frac{1}{L}\log\frac{|z^L-1|}{|z|^L}
\ea
which is positive if $|z|^L\le \frac12$ for all $z\in (1-\eps)\D$, or equivalently if $(1-\eps)^L\le \frac12$. As $1-\eps< e^{-\eps}$, this   is satisfied if $L\ge \frac{\log 2}{\eps}$.

For general $w$, we take $\xi_k=B_{-w}(\zeta_k)$, where $B_a(z)=\frac{z-a}{1-\bar{a}z}$ and $\zeta_k$ are as above. Then,  
\ba
\frac{1}{L}\sum_{k=1}^L\log|z-\xi_k|&=\frac{1}{L}\sum_{k=1}^L\log \big|z-\frac{\zeta_k+w}{1+\bar{w}\zeta_k}\big|\\
&= \frac{1}{L}\sum_{k=1}^L\log \big|z-w-\zeta_k(1-\bar{w}z)|-\frac{1}{L}\sum_{k=1}^L\log|1+\bar{w}\zeta_k| \\
&= \log|1-\bar{w}z|+\frac{1}{L}\sum_{k=1}^L \log |B_wz-\zeta_k|-\frac{1}{L}\sum_{k=1}^L\log|1+\bar{w}\zeta_k| \\
&=  \log|1-\bar{w}z|+\frac{1}{L}\log|(B_wz)^L-1| -\frac{1}{L}\log|1-(-\bar{w})^L|
\ea
where we have used the fact that $\zeta_k$ are the $L$th roots of unity. Therefore,
\ba
\frac{1}{L}\sum_{k=1}^L\log|z-\xi_k| - \log|z-w|&=\frac{1}{L}\log \frac{|(B_wz)^L-1|}{|1-(-\bar{w})^L|\times |B_wz|^L}.
\ea
Let $r_{\eps}=\max\{|B_wz|\suchthat z,w\in (1-\eps) \D\}$. The maximum is attained when $w=1-\eps$ and $z=-w$, leading to 
\ba
r_{\eps}=\left|\frac{-2w}{1+w^2}\right|=1-\frac{(1-w)^2}{1+w^2}\le 1-\frac12 \eps^2\le  e^{-\frac12\eps^2}.
\ea
Consequently, 
\ba
\frac{|(B_wz)^L-1|}{|1-(-\bar{w})^L|\times |B_wz|^L} &\ge \frac{1-r_{\eps}^L}{(1+(1-\eps)^L)r_{\eps}^L} \; \ge \;  \frac{1-e^{-\frac12 L\eps^2}}{2e^{-\frac12 L \eps^2}}.
\ea
This is at least $1$ if $3e^{-\frac12L\eps^2}\le 1$ or equivalently if $L\ge \frac{2\log 3}{\eps^2}$. 

\vspace{0.1in}

Applying the above procedure to each zero of $p_1$, we obtain a monic polynomial $q_1$ of degree $La_n$ with all its zeros in $\T,$ such that
\begin{equation}\label{p_1}
\log |p_1(z)|\leq \frac{1}{L}\log |q_1(z)|, \hspace{0.05in} \forall z\in (1-\eps)\D.    
\end{equation}

From \eqref{p_1} and \eqref{p_2}, we conclude that for $z\in (1-\eps)\D,$
\ba
\log|p(z)|&= \log|p_1(z)| + \log |p_2(z)|\\
&\leq \frac{1}{L}\log|q_1(z)| + \log |q_2(z)|\\
&= \frac{1}{L} \log|q(z)|,
\ea
\noindent where $q(z) = q_1(z)q_2^L(z).$ Clearly $q$ is a monic polynomial with all its zeros in $\T.$ Furthermore $\deg (q) = La_n + (n-a_n)L = Ln.$ This finishes the proof of the lemma. 
\eprf

\begin{remark}\label{rmk:merits}
The above proof is a constructive ``derandomization'' of the original proof we came up with for the following alternative version of Lemma \ref{pushzeros}, which uses a probabilistic method for showing the existence of $p_1$, namely, ``pushing'' the zeros by replacing them with random samples (partly inspired by the random coefficient replacement used by K\"orner \cite{Korner} to solve a problem of Littlewood on the existence of so-called flat polynomials). 

While nonconstructive, the probabilistic approach relies on more general tools that could be adapted to settings where explicit formulas are unavailable.  Also, while the derandomized proof gives tighter control on the increase in degree (providing in general a smaller factor $L$), we note from the description of $L$ in the statement of Lemma \ref{lem:probabilistic} that with the probabilistic method we can preserve degree (i.e., we can take $L=1$, pushing each zero by replacing it with a single random sample) if $a_n$ is assumed to grow sufficiently fast.

The probabilistic approach is centered around having a certain point-wise estimate hold with overwhelming probability as $n \rightarrow \infty$, so the statement of the lemma specifies an appropriate choice of $\e$ and $L$ in terms of $n$.
\end{remark}

\begin{lemma}\label{lem:probabilistic}
Let $p\in\PP_n(\overline{\D})$ have at least one zero in $\D,$ and let $a_n = \deg(p_1)$ with $p = p_1 p_2$ factored as in the proof of Lemma \ref{pushzeros}. 
Let $\eps = (\log n)^{-2},$ and $L = \lceil \max\{(\log n)^{10} / a_n, 1 \} \rceil.$ There exists a monic polynomial $q$ of degree $L n$ with all its zeros on the unit circle $\T$, and which satisfies
\begin{equation}\label{smallArea2}
\log|p(z)|\leq \frac{1}{L}\log|q(z)|, \hspace{0.05in} \forall z\in (1-\eps)\D.
\end{equation}
\end{lemma}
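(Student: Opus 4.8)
The plan is to carry out a randomized version of the argument in Lemma~\ref{pushzeros}, as anticipated in Remark~\ref{rmk:merits}. Factor $p=p_1p_2$ exactly as in the proof of Lemma~\ref{pushzeros}: $p_2$ collects the zeros in the annulus $1-\eps<|z|\le 1$ and $p_1$ the zeros in $(1-\eps)\overline{\D}$, with $\deg p_1=a_n$. Push the zeros of $p_2$ radially onto $\T$ as in Lemma~\ref{pushzeros} to get a monic $q_2$ of degree $n-a_n$ with $\log|p_2|\le\log|q_2|$ on $(1-\eps)\D$ (if $a_n=0$ this is all that is needed, and one takes $L=1$). For $p_1$, to each zero $w$ (listed with multiplicity) attach $L$ i.i.d.\ points $\xi_{w,1},\dots,\xi_{w,L}$ drawn from the harmonic measure $\omega_w$ of $\D$ seen from $w$ (density the Poisson kernel), put $Q_1(z)=\prod_{w}\prod_{j=1}^{L}(z-\xi_{w,j})$ --- a random monic polynomial of degree $La_n$ with all zeros on $\T$ --- and set $q=Q_1 q_2^{L}$, monic of degree $La_n+L(n-a_n)=Ln$ with zeros on $\T$. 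Then \eqref{smallArea2} follows as soon as one shows that, with probability tending to $1$,
\[
F(z):=\frac{1}{La_n}\log|Q_1(z)|-\frac{1}{a_n}\log|p_1(z)|\ \ge\ 0\qquad\text{for every }z\in(1-\eps)\overline{\D}.
\]

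For a fixed $z\in(1-\eps)\overline{\D}$: since $\zeta\mapsto\log|z-\zeta|$ is subharmonic, $\int_{\T}\log|z-\zeta|\,d\omega_w(\zeta)$ equals its harmonic extension evaluated at $w$, namely $\log|1-\bar wz|=\log|z-w|+g_\D(z,w)$, where $g_\D(z,w)=\log\bigl|\tfrac{1-\bar wz}{z-w}\bigr|=-\log|B_w(z)|$ is the Green's function of $\D$; the $r_\eps$ computation from the proof of Lemma~\ref{pushzeros} gives $g_\D(z,w)\ge\tfrac12\eps^2$ when $z,w\in(1-\eps)\overline{\D}$. Hence $\E[F(z)]=\frac{1}{a_n}\sum_w g_\D(z,w)\ge\tfrac12\eps^2$. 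Moreover $F(z)$ is a deterministic constant plus $\frac{1}{La_n}$ times a sum of $La_n\ge(\log n)^{10}$ independent terms $\log|z-\xi_{w,j}|$, each lying in $[\log\eps,\log 2]$ (because $\eps\le|z-\xi|\le 2$ for $z\in(1-\eps)\overline{\D}$, $\xi\in\T$). Hoeffding's inequality with deviation $\tfrac14\eps^2\le\tfrac12\E[F(z)]$ then yields
\[
\P\!\left(F(z)<\tfrac14\eps^2\right)\ \le\ \exp\!\left(-\frac{La_n\,\eps^{4}}{8(\log(2/\eps))^{2}}\right)\ \le\ \exp\!\left(-\,c\,\frac{(\log n)^{2}}{(\log\log n)^{2}}\right),
\]
using $La_n\ge(\log n)^{10}$ and $\eps=(\log n)^{-2}$.

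To pass from one point to all of $(1-\eps)\D$: first, $F$ is superharmonic on $(1-\eps)\D$ --- it is harmonic ($\frac1{La_n}\log|Q_1|$, since $Q_1$ has no zeros in $\D$) minus subharmonic ($\frac1{a_n}\log|p_1|$) --- so by the minimum principle it suffices to prove $F\ge 0$ on the circle $(1-\eps)\T$. On $(1-\eps)\T$, set $\delta_0:=(\eps/2)^{a_n}$ and split into the $\delta_0$-neighbourhood of the zero set $Z_\eps^1$ of $p_1$ and its complement. On the neighbourhood $F\ge 0$ holds \emph{deterministically}: $|Q_1(z)|\ge\eps^{La_n}$ always, while $\mathrm{dist}(z,Z_\eps^1)\le\delta_0$ forces $\frac1{a_n}\log|p_1(z)|\le\log\eps$ after bounding the non-nearest factors of $p_1$ crudely by $2$, so $F(z)\ge\log\eps-\log\eps=0$. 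On the complement, $F$ restricted to $(1-\eps)\T$ is Lipschitz with constant $\lesssim 1/\mathrm{dist}(z,Z_\eps^1)$, so I would take a scale-adapted (Whitney-type) net whose spacing at $z$ is $\asymp\mathrm{dist}(z,Z_\eps^1)\,\eps^2$ (capped below at $\eps^{3}$); a dyadic count (arc length $\lesssim a_n\delta$ in the shell $\mathrm{dist}(\cdot,Z_\eps^1)\asymp\delta$, with $\log(\eps/\delta_0)\lesssim a_n\log\log n$ dyadic scales) shows this net has cardinality $\lesssim\eps^{-2}\bigl(a_n\log(\eps/\delta_0)+\eps^{-1}\bigr)\lesssim n^{2}(\log n)^{O(1)}$, hence $\log(\#\,\text{net})=O(\log n)$. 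A union bound over the net, using the Hoeffding estimate with the uniform tolerance $\tfrac14\eps^2$ (legitimate since $\E[F]\ge\tfrac12\eps^2$ everywhere), shows that with probability $\to 1$ we have $F\ge\tfrac14\eps^2$ at every net point, hence $F\ge\tfrac18\eps^2>0$ on the complement, hence $F\ge 0$ on all of $(1-\eps)\T$, hence $F\ge 0$ on $(1-\eps)\overline{\D}$.

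The delicate point --- and the reason one cannot simply net $(1-\eps)\D$ uniformly --- is the behaviour of $F$ near the zeros of $p_1$: there $\log|p_1|$ varies on scales as small as $(\eps/2)^{a_n}$, so the ``generic'' mean gap $\tfrac12\eps^2$ does not control $F$, and a uniform net fine enough to resolve those scales would have exponentially many points in $a_n$, which the $\exp(-c(\log n)^2/(\log\log n)^2)$ failure probability cannot absorb once $a_n$ is large (the $L=1$ regime). The two devices that fix this are the deterministic positivity of $F$ in the immediate $\delta_0$-neighbourhood of $Z_\eps^1$ and the scale-adapted net on the complement; combined with the reduction to the one-dimensional circle $(1-\eps)\T$ via superharmonicity, they keep the net cardinality polynomial in $n$. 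The remaining ingredients --- the radial-projection step, the harmonic-measure identity, the Lipschitz and $r_\eps$ estimates, and assembling $q$ --- are routine.
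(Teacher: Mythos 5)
Your proposal is correct and follows essentially the same route as the paper's proof: the same factorization $p=p_1p_2$ with radial pushing for $p_2$, the same harmonic-measure (Poisson-kernel) sampling of $L$ points per zero of $p_1$, the same expectation identity producing the $\tfrac12\eps^2$ gap, and the same Hoeffding bound with increments in $[\log\eps,\log 2]$ yielding failure probability $\exp(-c(\log n)^2/(\log\log n)^2)$ per point. The only difference is that you work out the uniformity step (reduction to the circle $(1-\eps)\T$ by superharmonicity, deterministic positivity within $(\eps/2)^{a_n}$ of the zeros of $p_1$, and a Whitney-type net) that the paper compresses into the phrase ``epsilon-net argument''; this is done correctly, apart from the slip that the net spacing should be capped \emph{above} (not below) by $\eps^3$ in the region far from the zero set.
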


\begin{proof}[Proof (probabilistic approach)]

Using the same decomposition $p=p_1 p_2$ from the above proof, apply the same direct strategy for replacing $p_2$ with $q_2$. 
Then we use a probabilistic method to show the existence of a suitable replacement $q_1$ for $p_1$ satisfying \eqref{p_1}.

First, we note that for $w,z \in \D$ we have
\ben\label{eq:singlelayer}
\log|z-w| \leq \int_{0}^{2\pi} \log|z-e^{i \theta}| f_w(\theta) d\theta,
\een
where $f_w(\theta)$ denotes the Poisson kernel.
Indeed, we have $\log|z-w| \leq \log|1-\bar{w}z|$ for $z,w \in \D$, and the latter is harmonic in $\D$ so it coincides with its Poisson integral
\ba
\log|1-\bar{w}z| &= \int_{0}^{2\pi} \log|1-e^{-i \theta} z| f_w(\theta) d\theta \\
&= \int_{0}^{2\pi} \log|z- e^{i\theta}| f_w(\theta) d\theta.
\ea

For each $i=1,...,a_n$ take $L$ independent random samples $z_{i,j}^* \in \T$ according to the density $f_{z_i}(\theta)$.
Let $q_1(z) = \prod_{i=1}^{a_n}\prod_{j=1}^L(z-z_{i,j}^*)$.

It follows from \eqref{eq:singlelayer} that
\ben\label{eq:expectation}
\EE \log|z-z_{i,j}^*| = \log|1-\bar{z_i} z|.
\een
We also have, for $|w|<1-\e$ and $|z|=1-\e$
\ben\label{eq:wiggleroom}
\log|z-w| \leq \log|1-\bar{w}z| - \frac{\e^2}{2}.
\een
Together \eqref{eq:expectation} and \eqref{eq:wiggleroom} imply
\ben\label{eq:anchor}
\log|p_1(z)| \leq \frac{1}{L} \EE \log|q_1(z)| - a_n \frac{\e^2}{2}.
\een

The desired uniform estimate \eqref{p_1} follows from an epsilon-net argument once we establish that, with overwhelming probability (w.o.p.), the following pointwise estimate holds for fixed $z$ with $|z| = 1-\e$.
\ben
\log|p_1(z)| < \frac{1}{L} \log|q_1(z)| - \frac{a_n}{4} \e^2 \quad \text{(w.o.p.)}
\een
and in view of \eqref{eq:anchor} it suffices to show that for each $|z|=1-\e$ we have
\ben\label{eq:wop}
\left| \frac{1}{L} \log|q_1(z)| - \EE \frac{1}{L} \log|q_1(z)| \right| \leq a_n \e^2/8 \quad \text{(w.o.p.)}
\een
Since $|z_{i,j}^*|=1$ we have
\ben\label{eq:boundedRV}
\log \e \leq \log |z - z_{i,j}^*| \leq \log 2, \quad \text{for } |z|=1-\e.
\een
Define (independent) random variables $X_{i,j} = X_{i,j}(z) := \frac{\log|z-z_{i,j}^*|}{-\log \e}$, and let
$\displaystyle S_N = \sum_{i=1}^{a_n} \sum_{j=1}^L X_{i,j}$ where $N= L \cdot a_n$. By \eqref{eq:boundedRV} $X_{i,j}$ are bounded, namely $|X_{i,j}| \leq 1$, and hence applying Hoeffding's inequality we have
\ben
\bP (|S_N - \EE S_N | \geq \lambda ) \leq  \exp \left(- \frac{\lambda^2}{2N}\right).
\een
Taking $\lambda = \frac{L a_n \e^2}{-8\log \e}$ we have the following estimate for the probability of the ``bad'' event
\ben\label{eq:bad}
\bP \left( \left| \frac{1}{L}\log|q_1(z)| - \EE \frac{1}{L}\log|q_1(z)| \right| \geq \frac{a_n \e^2}{8} \right) \leq  \exp \left(- \frac{L a_n \e^4}{128(\log \e)^2} \right).
\een
Hence, we yield the desired overwhelming probability statement \eqref{eq:wop} if we choose, as stated in the lemma, $\e = (\log n)^{-2}$ and $L = \lceil \max\{(\log n)^{10} / a_n, 1 \} \rceil$.
\end{proof}



\subsection{Establishing \eqref{eq:key} using equidistribution and zero-pushing}\label{sec:key}

For a polynomial $p,$ let $\Lambda_p^{*} = \Lambda_p\cap\overline{\D}$, and $\widehat\Lambda_p = \Lambda_p\cap\D^c$ denote the parts of the lemniscate inside and, outside the unit disc respectively. For a general polynomial $p$ we may not be able to compare $m(\Lambda_p^{*})$ and $m(\Lambda_p).$ However, for $p$ a monic polynomial with all zeros in $\overline{\D}$, it was shown in \cite{EHP} that $m(\Lambda_p)\leq 4\sqrt{\pi} \sqrt{m(\Lambda_p ^{*})}.$ We now show that this estimate can be improved for polynomials whose zeros are equidistributed. 

\begin{lemma}\label{area in vs out}
Let $p_n\in\mathcal{P}_n(\overline{\D})$ be a sequence of polynomials with corresponding lemniscates $\Lambda_n$, with $\mu_n$ denoting the empirical measure of zeros. Assume that 
$\mu_n\overset{w}{\to}\mu_{\T}$.
Then, for all large enough $n$ we have
\[ m(\Lambda_n)\leq 3 m(\Lambda_n^{*}).\]
\end{lemma}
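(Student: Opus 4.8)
The plan is to push the part $\widehat\Lambda_n=\Lambda_n\cap\D^{c}$ of the lemniscate lying outside the unit disc into the disc by the ``linear reflection'' $\Phi\colon re^{i\theta}\mapsto(2-r)e^{i\theta}$ used in \cite{EHP}, and to improve on the crude distortion estimate by first exploiting equidistribution to confine $\widehat\Lambda_n$ to the bounded annulus $\{1\le|z|<\tfrac43\}$. The map $\Phi$ sends $\{1<|z|<2\}$ bijectively onto $\{0<|z|<1\}$, and a one-line computation gives, for every $w$ with $|w|\le 1$ and every $r>1$,
\[
\bigl|\Phi(re^{i\theta})-w\bigr|^{2}-\bigl|re^{i\theta}-w\bigr|^{2}=4(1-r)\bigl(1-|w|\cos(\theta-\arg w)\bigr)\le 0,
\]
since $1-r<0$ while $1-|w|\cos(\cdot)\ge 1-|w|\ge 0$. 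Taking the product over the zeros of $p_n$ (all in $\overline{\D}$) yields $|p_n(\Phi(z))|\le|p_n(z)|$ for $|z|>1$, so $\Phi$ maps $\widehat\Lambda_n\cap\{|z|>1\}$ injectively into $\Lambda_n^{*}$; hence $m\bigl(\Phi(\widehat\Lambda_n)\bigr)\le m(\Lambda_n^{*})$, the arc $\widehat\Lambda_n\cap\T$ being a null set. Passing to polar coordinates, the area element transforms as $(2-r)\,dr\,d\theta=\tfrac{2-r}{r}\,r\,dr\,d\theta$, so $m(\Phi(A))=\int_{A}\tfrac{2-r}{r}\,dm$ for any Borel $A\subseteq\{1<|z|<2\}$.

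Next I would use the hypothesis $\mu_n\overset{w}{\to}\mu_{\T}$ to show that $\widehat\Lambda_n\subseteq\{1\le|z|<\tfrac43\}$ for all large $n$. Since both measures are supported in the compact $\overline{\D}$, a Skorokhod coupling produces $V_n\sim\mu_n$ and $V\sim\mu_{\T}$ on a common probability space with $V_n\to V$ almost surely; for $|z|\ge\tfrac43$ the function $w\mapsto\log|z-w|$ is $3$-Lipschitz on $\overline{\D}$, whence
\[
\bigl|U_{\mu_n}(z)-U_{\mu_{\T}}(z)\bigr|\le 3\,\E\,|V_n-V|\;\longrightarrow\;0
\]
uniformly on $\{|z|\ge\tfrac43\}$, using dominated convergence for the expectation. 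As $U_{\mu_{\T}}(z)=\log|z|\ge\log\tfrac43>0$ there and $\tfrac1n\log|p_n(z)|=U_{\mu_n}(z)$, this forces $|p_n(z)|>1$ on $\{|z|\ge\tfrac43\}$ for $n$ large, i.e. $\widehat\Lambda_n\subseteq\{1\le|z|<\tfrac43\}$.

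To finish: on $\{1<r<\tfrac43\}$ one has $\tfrac{2-r}{r}>\tfrac12$, so for $n$ large
\[
m(\Lambda_n^{*})\ge m\bigl(\Phi(\widehat\Lambda_n)\bigr)=\int_{\widehat\Lambda_n}\tfrac{2-r}{r}\,dm\ge\tfrac12\,m(\widehat\Lambda_n),
\]
and therefore $m(\Lambda_n)=m(\Lambda_n^{*})+m(\widehat\Lambda_n)\le 3\,m(\Lambda_n^{*})$. The only substantive step is the confinement of $\widehat\Lambda_n$; the constant $\tfrac43$ is precisely what the inequality $\tfrac{2-r}{r}\ge\tfrac12$ — and hence the factor $3$ — requires, and the rest is bookkeeping. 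I expect the main obstacle to be making the uniform convergence $U_{\mu_n}\to U_{\mu_{\T}}$ on $\{|z|\ge\tfrac43\}$ fully rigorous (justifying the Skorokhod representation, the Lipschitz estimate, and $\E|V_n-V|\to 0$); this is standard potential-theoretic fare, but it is the one place where the equidistribution hypothesis is genuinely used.
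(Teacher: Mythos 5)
Your proof is correct and follows essentially the same strategy as the paper's: use equidistribution of the zeros to confine $\widehat\Lambda_n$ to a thin annulus, then apply the $r\mapsto 2-r$ reflection together with a Jacobian estimate to compare the outer and inner pieces. The only cosmetic differences are that you transport $\widehat\Lambda_n$ inward (bounding the Jacobian $\frac{2-r}{r}$ below by $\tfrac12$ on $1<r<\tfrac43$) whereas the paper transports $\Lambda_n^*\cap\{0.8<|z|<1\}$ outward (bounding the Jacobian $\frac{2}{|z|}-1$ above by $\tfrac32$, giving the slightly sharper intermediate bound $\tfrac52$), but both land within the stated factor $3$.
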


This constant $3$ can be improved, but we opt for keeping the presentation simple.


\begin{proof}
First note that $\mu_n\overset{w}{\to}\mu_{\T}$ implies  $U_{\mu_n}\to U_{\mu_{\T}}$ uniformly on compact subsets of $\D^c.$ Using $U_{\mu_{\T}}(z) = \log^{+}|z|,$ this translates to $|p_n(z)|$ being exponentially large on any compact subset of $\D^c.$ In particular, for all  large enough $n,$
\begin{equation}\label{outsidesmall}
 \Lambda_n\subset 1.2\D.   
\end{equation}

From here, our argument is a slight modification of that in \cite{EHP}. Following in their vein, we first observe that for our sequence of polynomials $p_n$ a simple computation (using the fact that all the zeros are in $\overline{\D}$) yields the estimate
\begin{equation}\label{reflectionestimate}
 |p_n(re^{i\theta})|\leq |p_n\left((2-r)e^{i\theta}\right)|, \quad r\in (0, 1).   
\end{equation}

Define the map $R:\D\rightarrow\C$ by $R(re^{i\theta}) = (2-r)e^{i\theta}.$ It is easy to check that that the Jacobian determinant of this map is given by $J(z) = \frac{2}{|z|} - 1.$ From \eqref{outsidesmall} and \eqref{reflectionestimate}, it follows that

\[\widehat{\Lambda_n}\subset R\left(\Lambda_n^{*}\cap\{z: 0.8 < |z| < 1\}\right):= R(U_n)\]

Therefore using a change of variables we have 
\ba\label{changeofvariable}
m(\widehat{\Lambda_n})&\leq m\left(R(U_n)\right) \\
&=\int_{R(U_n)}dm(z) \\
&=\int_{U_n}J(z)dm(z) = \int_{U_n}\left(\frac{2}{|z|} - 1\right) dm(z)\\
&\leq \frac{3}{2} m(U_n)\leq\frac{3}{2} m(\Lambda_n^{*}),
\ea
where in the last line we have used $0.8\leq |z|\leq 1$ for all points $z\in U_n$. The above estimate along with $m(\Lambda_n) = m(\widehat{\Lambda_n}) + m(\Lambda_n^{*}),$ finally gives $m(\Lambda_n)\leq \frac{5}{2}m(\Lambda_n^{*})$, thereby completing the proof.
\end{proof}

Having proved the above estimate, it is a simple matter now to establish \eqref{eq:key} showing that the minimum area for polynomials with zeros on $\overline{\D}$ and on $\T$ are essentially comparable (with marginal increase in the degree).

\begin{corollary}\label{minimum areas D vs T}
For all large enough $n,$  $\frac{1}{3}\kappa_{n(\log n)^4}(\T, 1)\leq\kappa_n(\overline{\D}, 1)$. On the other hand, if $t>1$, then for all large enough $n$, we have $\frac13 \kappa_{n(\log \log n)^4}(\T,t) \le \kappa_n(\overline{\D},t)$.
\end{corollary}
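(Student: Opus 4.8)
The plan is to turn an area minimizer over $\mathcal{P}_n(\overline{\D})$ into a comparable polynomial with all zeros on $\T$, at the cost of a controlled increase in degree, using the zero‑pushing lemma to handle the part of the lemniscate inside $\D$ and equidistribution to handle the part outside. Fix $t\ge 1$ (the cases $t=1$ and fixed $t>1$ run in parallel, differing only in the size of the degree blow‑up). For large $n$ let $p_n\in\mathcal{P}_n(\overline{\D})$ attain $\kappa_n(\overline{\D},t)$; such a minimizer exists by compactness of $\mathcal{P}_n(\overline{\D})$ and lower semicontinuity of $p\mapsto m(\Lambda_p(t))$. Put $\eps_n=\sqrt{6}\,(\log n)^{-2}$ for $t=1$ and $\eps_n=\sqrt{6}\,(\log\log n)^{-2}$ for $t>1$, and let $L_n=[6/\eps_n^{2}]$, so that $L_nn\le n(\log n)^4$, resp. $n(\log\log n)^4$. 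If every zero of $p_n$ lies on $\T$, take $q_n=p_n^{L_n}\in\mathcal{P}_{L_nn}(\T)$: since $t^{1/L_n}\le t$ we have $\Lambda_{q_n}(t)=\{|p_n|\le t^{1/L_n}\}\subseteq\Lambda_{p_n}(t)$, hence $\kappa_{L_nn}(\T,t)\le\kappa_n(\overline{\D},t)$ and we are done in this case. Otherwise apply Lemma~\ref{pushzeros} with $\eps=\eps_n$ to obtain a monic $q_n\in\mathcal{P}_{L_nn}(\T)$ with $\log|p_n(z)|\le\frac{1}{L_n}\log|q_n(z)|$ for all $z\in(1-\eps_n)\D$.

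The pointwise inequality forces $\Lambda_{q_n}(t)\cap(1-\eps_n)\D\subseteq\Lambda_{p_n}(t)$: if $|q_n(z)|\le t$ and $|z|<1-\eps_n$ then $|p_n(z)|\le|q_n(z)|^{1/L_n}\le t^{1/L_n}\le t$. Bounding the area of the annulus $\overline{\D}\setminus(1-\eps_n)\D$ crudely by $2\pi\eps_n$ gives $m(\Lambda_{q_n}(t)\cap\overline{\D})\le\kappa_n(\overline{\D},t)+2\pi\eps_n$. By the upper bounds of Section~\ref{sec:UB} we have $\kappa_n(\overline{\D},t)\le\kappa_n(\T,t)\to 0$, and $\eps_n\to 0$, so this tends to $0$. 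Since $q_n$ is monic with all zeros on $\T\subseteq\overline{\D}$ and $\overline{\D}$ has capacity $1$, Theorem~\ref{zero distribution} applied to the sequence $(q_n)$ with $K=\overline{\D}$ yields $\mu_{q_n}\overset{w}{\to}\nu_{\overline{\D}}=\mu_{\T}$.

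Thus $(q_n)$ satisfies the hypothesis of Lemma~\ref{area in vs out}, whose proof applies verbatim at any fixed level $t$ and in fact delivers the constant $5/2$; hence $m(\Lambda_{q_n}(t))\le\frac{5}{2}\,m(\Lambda_{q_n}(t)\cap\overline{\D})$ for large $n$. Combining the last two estimates, $\kappa_{L_nn}(\T,t)\le m(\Lambda_{q_n}(t))\le\frac{5}{2}\kappa_n(\overline{\D},t)+5\pi\eps_n$, i.e. $\kappa_n(\overline{\D},t)\ge\frac{2}{5}\kappa_{L_nn}(\T,t)-2\pi\eps_n$. To absorb the error term, invoke the circle‑case lower bounds established in Section~\ref{alllowerbounds}: $\kappa_m(\T,1)\gtrsim 1/\log m$, and for fixed $t>1$, $\kappa_m(\T,t)\gtrsim 1/\log\log m$. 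With $m=L_nn\le n(\log n)^4$ (resp. $n(\log\log n)^4$) these give $\kappa_{L_nn}(\T,t)\gtrsim 1/\log n$ (resp. $1/\log\log n$), which for large $n$ exceeds $30\pi\eps_n$ since $\eps_n\asymp(\log n)^{-2}$ (resp. $(\log\log n)^{-2}$). Hence $2\pi\eps_n\le\frac{1}{15}\kappa_{L_nn}(\T,t)$ eventually, and $\kappa_n(\overline{\D},t)\ge\bigl(\frac{2}{5}-\frac{1}{15}\bigr)\kappa_{L_nn}(\T,t)=\frac{1}{3}\kappa_{L_nn}(\T,t)$ for all large $n$, which is the asserted inequality (the cosmetic passage from degree $L_nn$ to the nominal $n(\log n)^4$, resp. $n(\log\log n)^4$, being routine bookkeeping that we suppress).

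The main obstacle is exactly this last calibration. Zero‑pushing only controls $q_n$ on the slightly shrunken disc $(1-\eps_n)\D$, so the comparison of constraints must be split into the thin boundary annulus and the exterior of $\D$; the exterior is handled by equidistribution applied to the \emph{constructed} polynomial $q_n$ rather than to $p_n$, which is legitimate precisely because the annulus estimate already forces $m(\Lambda_{q_n}(t)\cap\overline{\D})\to 0$. The delicate quantitative point is that to land on the clean constant $3$ (and not merely $3+o(1)$) one cannot afford to measure the annulus error $\eps_n$ against the \emph{a priori unknown} $\kappa_n(\overline{\D},t)$ and must instead measure it against $\kappa_{L_nn}(\T,t)$, which is bounded below independently; this in turn forces a balance, since making $\eps_n$ small demands a large $L_n$ while keeping $L_nn\le n(\log n)^4$ (resp. $n(\log\log n)^4$) caps how small $\eps_n$ may be — fortunately the smallest admissible choice $\eps_n\asymp(\log n)^{-2}$ (resp. $(\log\log n)^{-2}$) is still of strictly smaller order than the corresponding lower bound for $\kappa(\T,\cdot)$.
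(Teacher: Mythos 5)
Your proposal is correct and follows essentially the same route as the paper's proof of Corollary \ref{minimum areas D vs T}: zero-pushing (Lemma \ref{pushzeros}) to compare the portion of the lemniscate inside $(1-\eps)\D$, paying a $2\pi\eps$ annulus error, then Lemma \ref{area in vs out} to compare the full lemniscate with its restriction to $\overline{\D}$, and finally absorbing the error term via the already-proved circle-case lower bound. You spell out two details that the paper leaves implicit and that are worth stating: first, that the comparison of constraints rests on the pointwise inclusion $\Lambda_{q_n}(t)\cap(1-\eps_n)\D\subseteq\Lambda_{p_n}(t)$ (using $t\ge 1$ so that $t^{1/L}\le t$); and second, that Lemma \ref{area in vs out} is applied to the \emph{constructed} sequence $q_n$, whose equidistribution must itself be deduced via Theorem \ref{zero distribution} from the already-established smallness of $m(\Lambda_{q_n}(t)\cap\overline{\D})$ (rather than inherited from $p_n$). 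Your remark about calibrating $\eps_n$ against the degree budget and absorbing the error against the \emph{independent} lower bound $\kappa_{L_nn}(\T,t)\gtrsim 1/\log n$ (resp.\ $1/\log\log n$), rather than against the unknown $\kappa_n(\overline{\D},t)$, is exactly the right way to land on the constant $\frac{1}{3}$. You are also right that passing from the exact produced degree $L_nn$ to the nominal $n(\log n)^4$ (resp.\ $n(\log\log n)^4$) is a small bookkeeping matter shared with the paper's own write-up.
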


\begin{proof}
Define 
\begin{equation}\label{kappa*}
\kappa_n ^{*}(\D, 1) = \inf_{p\in\mathcal{P}_n(\overline\D)}\{m(\Lambda_p^{*})\}.    
\end{equation}
From the proof of the zero pushing lemma, Lemma \ref{pushzeros}, with $\eps = \frac{1}{(\log n)^2}$ we obtain

\begin{equation}\label{kappacompare}
\kappa_n (\overline{\D}, 1)\geq \kappa_n ^{*}(\D, 1)\geq \kappa_{n (\log n)^4}^{*}(\T, 1) - \frac{c}{(\log n)^2},  
\end{equation}
where $\kappa_n ^{*}(\T, 1)$ is defined as in \eqref{kappa*}, except that the infimum is taken over $\mathcal{P}_n(\T)$. Now we finish the proof by noting that from Lemma \ref{area in vs out} we have $\kappa_{n (\log n)^4}^{*}(\T, 1)\geq\frac{2}{5}\kappa_{n (\log n)^4}(\T, 1).$ For $t>1$, the proof of $\frac13 \kappa_{n(\log \log n)^4}(\T,t) \le \kappa_n(\overline{\D},t)$ follows in a similar vein with the obvious modifications. 
\end{proof}

\section{Estimate for inradius in terms of area}\label{sec:Solynin}

We start with a lemma relating the area, perimeter, and inradius of a sufficiently nice simply connected domain.

\begin{lemma}\label{lem1}
Let $\Omega\subset\C$ be a bounded simply connected domain with rectifiable boundary. Let $A, L$, and $\rho,$ denote the area of $\Omega,$  the length of $\partial\Omega$, and the inradius of $\Omega$ respectively. Then, 
\[A\leq 18\pi \rho L.\]
\end{lemma}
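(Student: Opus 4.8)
The plan is to prove the sharper-looking inequality $A \le 2\rho L$ first for convex domains and then reduce the general case to it, paying a bounded price in constants to land at $18\pi\rho L$. For a convex domain $\Omega$ with inradius $\rho$, an inscribed disc of radius $\rho$ touches $\partial\Omega$, and one has the classical bound $A \le \rho L$ (this follows, e.g., from the fact that $\Omega$ is contained in the union of the triangles with apex at the incenter over the boundary arcs, each of height at most... actually the cleanest route is: for a convex body, $A = \tfrac12 \int_{\partial\Omega} h(s)\,ds$ where $h$ is the support function relative to the incenter, and $h \le$ width $\le 2\rho$ is false in general — so instead I will use the genuinely correct statement $A \le \rho L$ which holds because the inner parallel body at distance $\rho$ degenerates; concretely, $A(\Omega) = A(\Omega_\rho) + \rho L - \pi \rho^2 \le \rho L$ by the Steiner-type formula for inner parallel sets of a convex body, since $\Omega_\rho$, the inner parallel body at the inradius, has empty interior up to measure zero). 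So for convex $\Omega$, $A \le \rho L$.

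For the non-convex case, let $\widehat{\Omega}$ be the convex hull of $\Omega$. Then $\rho(\widehat{\Omega}) \ge \rho(\Omega) =: \rho$, and the isoperimetric/convex-hull comparison gives $L(\partial\widehat{\Omega}) \le L(\partial\Omega) = L$ (the perimeter of the convex hull of a set is at most the perimeter of the set, for rectifiable boundaries). The issue is the area can only go up: $A(\Omega) \le A(\widehat{\Omega}) \le \rho(\widehat{\Omega}) L(\partial\widehat{\Omega})$, but $\rho(\widehat{\Omega})$ could be much larger than $\rho$. This is the main obstacle, and it is why the clean bound fails and a constant like $18\pi$ appears: a thin snake-like domain has tiny inradius but huge convex-hull inradius. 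So the convex-hull reduction alone does not work.

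The correct approach is a localization / covering argument. Cover $\Omega$ by discs of radius $\rho$: since $\rho$ is the inradius, \emph{no} disc of radius $> \rho$ fits in $\Omega$, which forces $\Omega$ to be "thin" at scale $\rho$ in the following sense. Tile the plane by a grid of squares of side $\rho$ (or $2\rho$); for each square $Q$ meeting $\Omega$, the portion $\Omega \cap Q$ — or better, $\Omega \cap (3Q)$ — cannot contain a disc of radius $\rho$, hence by an isoperimetric-type inequality for such "thin" pieces one bounds the area of each piece by a constant times $\rho$ times the length of boundary of $\Omega$ inside that neighborhood (using that a thin simply connected region has area $\lesssim \rho \cdot (\text{length of its boundary})$, since its width is $\lesssim \rho$ in every direction through any of its points). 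Summing over the grid squares with bounded overlap multiplicity (at most $9$ for a $3\times 3$ neighborhood scheme) and using $\sum_Q L(\partial\Omega \cap 3Q) \le 9 L$ yields $A \le C \rho L$ with an explicit $C$, and $18\pi$ is a comfortable choice. The delicate point I expect to spend the most care on is the per-square estimate: one wants that if a planar domain $D$ (here $\Omega\cap 3Q$, which need not be connected or simply connected) contains no disc of radius $\rho$, then $m(D) \le c\,\rho\,\mathcal{H}^1(\partial D)$ — this is where simple connectivity of the original $\Omega$, the rectifiability hypothesis, and a slicing argument (integrate the one-dimensional measure of vertical/horizontal slices, each of which has length controlled by $\rho$ between consecutive boundary crossings since no large disc fits) all get used; the constant $18\pi$ then absorbs the grid overlap factor and the passage from slices to perimeter.
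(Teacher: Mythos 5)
Your outer architecture --- a local estimate on each piece of a covering, then summing with bounded overlap --- is in the same family as the paper's, which instead covers $\Omega$ adaptively by the balls $B(z,2r_z)$ with $r_z=\mathrm{dist}(z,\partial\Omega)$ and extracts a disjoint subfamily via the (finite) Vitali covering lemma. A fixed-grid version at scale $\rho$ can probably be pushed through with a comparable constant, but the part you yourself flag as the delicate one, the per-square estimate, is where the real content of the lemma lives, and the justification you sketch for it does not hold up.

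You assert that a region containing no disc of radius $\rho$ has ``width $\lesssim\rho$ in every direction through any of its points'' and propose to conclude by slicing. That premise is false: a $\rho\times 1$ rectangle has inradius $\rho/2$ yet width $1$ in the long direction, so slices along that direction are not $O(\rho)$. (The target inequality $m(D)\lesssim\rho\,\mathcal{H}^1(\partial D)$ might still be salvageable for the sets you actually encounter, but not by this reasoning, and note also that $\Omega\cap 3Q$ carries no a priori control on its connectivity, so there is no ``thin simply connected region'' to appeal to.) What actually makes the local bound work --- and what is missing from your sketch --- is the topological fact that $\partial\Omega$ is a single rectifiable Jordan curve: for $z\in\Omega$ the curve touches $\partial B(z,r_z)$, and if $\Omega\not\subset B(z,2r_z)$ it must also exit $B(z,2r_z)$; a closed curve meeting the inner circle and leaving the outer circle of an annulus must traverse it at least twice, yielding $\mathcal{H}^1\bigl(\partial\Omega\cap T(z,r_z,2r_z)\bigr)\ge 2r_z$. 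This is precisely the paper's inequality \eqref{annulus}, and any completed version of your grid argument would need it (or a square analogue) in place of the slicing claim. You would also need to treat separately the degenerate case in which $\Omega$ fits inside a single $B(z,2r_z)$ (the paper's Case~1), where the annulus argument gives nothing but a direct comparison $A\le 4\pi r_z^2\le 2\rho L$ closes the bound.
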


\begin{proof}
For $z\in\Omega,$ let $r_z = \dist(z, \partial\Omega)$. Then $\overline\Omega\subset\bigcup_{z\in\Omega}B(z, 2r_z).$ By compactness, a finite number of them cover $\Omega,$ say $\Omega\subset\cup_{i=1}^{m}B(z_i, 2r_i)$, where for convenience, we denote $r_{z_i}$ by $r_i$. Now we deal with two cases.

\vspace{0.1in}

\noindent \textbf{Case $1$}: $\Omega\subset B(z_i, 2r_i)$ for some $i,$ $1\leq i \leq m.$

\vspace{0.1in}

Then $A\leq 4\pi r_i^2,$ and since $B(z_i, r_i)\subset\Omega,$ we have $L \geq 2\pi r_i$. Putting these two together we have $A\leq 2 r_i L\leq 2\rho L$, and hence the conclusion of the lemma holds in this case.

\vspace{0.1in}

\noindent \textbf{Case $2$}: None of the $B(z_i, 2r_i)$ contains all of $\Omega$.

\vspace{0.1in}

In this case, one easily checks that the length of $\partial\Omega$ contained in these balls cannot be too small, namely 
\begin{equation}\label{annulus}
|\partial\Omega\cap T(z_i, r_i, 2r_i)| \geq 2r_i.  
\end{equation}
Here $T(z, r, s)$ denotes the annulus with center at $z,$ and inner and outer radii $r$ and $s$ respectively. Now we apply the Vitali covering lemma to obtain a collection of disjoint balls, $\{B(z_{i_j}, 2r_{i_j})\}_{j=1}^k$ such that
\[\Omega\subset\bigcup_{j=1}^{k}B(z_{i_j}, 6r_{i_j})\]
This implies that 
$A\leq 36\pi \sum_{j=1}^{k}r_{i_j}^2.$
On the other hand, using the fact that the discs $B(z_{i_j}, 2r_{i_j})$ are disjoint, along with the estimate \eqref{annulus}, we obtain
$L\geq 2 \sum_{j=1}^{k} r_{i_j}.$
Combining the above two estimates, we obtain
$$A\leq 36\pi (\sup_{1\leq j \leq k}r_{i_j})\sum_{j=1}^{k} r_{i_j}\leq 18\pi (\sup_{1\leq j \leq k}r_{i_j})L\leq 18\pi \rho L,$$
which proves the lemma.
\end{proof}

\begin{lemma}\label{lemma:FN}
As in the statement of Lemma \ref{lem:inradius}, let $t>0$ and let $p$ be a (not necessarily monic) polynomial of degree $n$. Let $\Lambda = \Lambda_p(t)$.
The area $A(\Lambda)$ of $\Lambda$ and its perimeter $L(\Lambda)$ satisfy
\begin{equation}\label{eq:FNconcl}
L(\Lambda) \leq 4n\sqrt{\pi} \sqrt{A(\Lambda)}.
\end{equation}
\end{lemma}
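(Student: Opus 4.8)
The plan is to express the perimeter of $\Lambda = \Lambda_p(t)$ as a contour integral and bound it using the structure of $|p|$. Write $p(z) = a\prod_{j=1}^n (z-z_j)$. On the smooth part of $\partial\Lambda$ we have $|p(z)| = t$, so $\partial\Lambda$ is a level curve of $|p|$, and its length is
\[
L(\Lambda) = \int_{\partial\Lambda} |dz|.
\]
The key observation is that on $\partial\Lambda$ the outward normal derivative of $\log|p|$ controls the geometry: parametrizing by arc length and using that $|\nabla \log|p|| = |p'/p|$ (away from zeros and critical points, which form a finite set and hence contribute nothing to length), one has $|p'(z)| = t\,|\nabla \log|p|(z)|$ on $\partial\Lambda$. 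The plan is to combine a pointwise lower bound on $|p'|$ along $\partial\Lambda$ with an area estimate. Specifically, I would integrate $|p'|$ over $\Lambda$ (or use the coarea formula for the function $|p|$ on $\Lambda$) to relate $L$, $A$, and $n$.

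The cleanest route I envisage is the coarea formula applied to $g(z) = |p(z)|^2$ on $\Lambda$:
\[
\int_{\Lambda} |\nabla g|\, dm = \int_0^{t^2} \mathcal{H}^1\big(\{|p|^2 = s\}\big)\, ds.
\]
Here $|\nabla g| = 2|p||p'|$. One then needs a lower bound of the form $\mathcal{H}^1(\{|p|=\sqrt s\}) \gtrsim L(\Lambda)/n$ — intuitively, the level sets of $|p|$ for $s$ slightly below $t^2$ are ``comparable'' to $\partial\Lambda$ up to a factor accounting for how $\Lambda$ splits into at most $n$ components (each component of $\Lambda_p(t)$ contains at least one zero, so there are at most $n$ of them, and on each one the level sets shrink monotonically toward the zeros). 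Meanwhile the left side is bounded above using $\int_\Lambda |p'|^2\,dm$ controlled by Bernstein/Markov-type reasoning together with $|p| \le t$ on $\Lambda$, and Cauchy--Schwarz converts the product $|p||p'|$ into $\big(\int |p|^2\big)^{1/2}\big(\int|p'|^2\big)^{1/2} \lesssim t\sqrt{A}\cdot (\text{something}\cdot n \sqrt A / t)$, yielding $L \lesssim n\sqrt{A}$ after tracking constants to get the factor $4\sqrt\pi$.

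The step I expect to be the main obstacle is the lower bound on the length of the intermediate level sets $\{|p| = s\}$ in terms of $L(\Lambda)$ and $n$ — i.e., showing these sublevel sets don't pinch off or fragment too fast as $s$ increases toward $t$. The honest fix is probably to avoid the coarea detour and instead argue directly: each component $\Omega_i$ of $\Lambda$ is mapped by $p$ onto the disc $\{|w| < t\}$ as a branched cover of some degree $d_i$ with $\sum d_i \le n$; then $L(\partial \Omega_i) = \int_{|w|=t} \sum_{z \in p^{-1}(w)\cap \partial\Omega_i} \frac{|dw|}{|p'(z)|}$, and by Cauchy--Schwarz plus the area identity $A(\Omega_i) = \int_{|w|<t} \#\{p^{-1}(w)\cap\Omega_i\}\,\frac{dm(w)}{|p'|^2} \cdot(\dots)$ — more precisely $\pi t^2 d_i = \int_{\Omega_i}|p'|^2\,dm$ and $A(\Omega_i) = \int_{|w|<t}\#p^{-1}(w)\,\frac{dm(w)}{|p'(p^{-1}(w))|^2}$ — one extracts $L(\partial\Omega_i)^2 \le (\text{length of }|w|=t)\cdot\int \frac{d_i\,|dw|}{|p'|^2} $. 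Summing over $i$, applying Cauchy--Schwarz in $i$ with $\sum d_i \le n$, and using $\sum_i \int_{\partial\Omega_i}\frac{|dw|}{|p'|^2}\cdot(\dots)$ related back to $A(\Lambda)$ via the isoperimetric-type identity for the branched cover should produce exactly $L(\Lambda) \le 4n\sqrt\pi\sqrt{A(\Lambda)}$; I would carry out this branched-covering computation carefully as the technical heart of the proof, and then Lemma \ref{lem:inradius} follows by combining \eqref{eq:FNconcl} with Lemma \ref{lem1}.
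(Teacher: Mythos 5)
Your proposal is a plan rather than a proof, and both routes you sketch stall at the same essential point, which the paper handles by invoking a ready-made inequality of Fryntov and Nazarov that you do not have.

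The paper's argument is short: it quotes from \cite[Sec.~4]{FN} the estimate
\[
L(\Lambda)\ \leq\ \sum_{\xi}\int_{\Lambda}\frac{dA(z)}{|z-\xi|},
\]
where $\xi$ runs over the $2n-1$ zeros of $p\,p'$, and then uses the Schwarz symmetric rearrangement inequality to get $\int_{\Lambda}|z-\xi|^{-1}\,dA\leq\int_{D_\Lambda}|z|^{-1}\,dA=2\sqrt{\pi A(\Lambda)}$, yielding $L\leq (2n-1)\cdot 2\sqrt{\pi}\sqrt{A}\leq 4n\sqrt{\pi}\sqrt{A}$. The entire content is concentrated in the quoted Fryntov--Nazarov inequality, which you neither cite nor rederive.

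Concerning your two routes. The coarea route (bound $\int_0^{t^2}\mathcal H^1(\{|p|^2=s\})\,ds$ by Cauchy--Schwarz using $\int_\Lambda|p'|^2\,dm=\pi t^2 n$ and $\int_\Lambda|p|^2\,dm\leq t^2A$, and try to recover $\mathcal H^1(\{|p|=t\})$ from the average) fails exactly where you flag it: there is no a priori monotonicity or comparability of $\mathcal H^1(\{|p|=s\})$ as $s\uparrow t$. Components merge across saddles of $|p|$ and lengths can drop; already for $p(z)=z^n$ the quantity $\int_{\{|p|=s\}}\frac{|dz|}{|p'|}=\frac{2\pi}{n}s^{(2-n)/n}$ is decreasing in $s$ for $n\geq 3$, so ``slightly below $t$'' controls nothing.

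The branched-covering route gets further, and if you push the Cauchy--Schwarz computation through (write $L(\partial\Omega_i)=\int_{\partial\Omega_i}|dz|$, split $1=|p'|^{1/2}\cdot|p'|^{-1/2}$, use $\int_{\partial\Omega_i}|p'|\,|dz|=2\pi t\,d_i$ and then Cauchy--Schwarz over $i$ with $\sum d_i=n$) you arrive at
\[
L(\Lambda)^2\ \leq\ 2\pi t\,n\int_{\partial\Lambda}\frac{|dz|}{|p'(z)|}.
\]
To obtain \eqref{eq:FNconcl} you must now show something like $\int_{\partial\Lambda}\frac{|dz|}{|p'|}\leq \frac{8n\,A(\Lambda)}{t}$. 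That is a genuine new estimate, not an ``isoperimetric-type identity.'' Under the covering change of variables, $\int_{\partial\Lambda}\frac{|dz|}{|p'|}=\int_{|w|=t}\sum_{z\in p^{-1}(w)}\frac{|dw|}{|p'(z)|^2}$ is a \emph{boundary} integral of $|\psi'|^2$-type quantities, while $A(\Lambda)=\int_{|w|<t}\sum_{z\in p^{-1}(w)}\frac{dm(w)}{|p'(z)|^2}$ is the corresponding \emph{area} integral. A boundary mean of a subharmonic function dominates its interior mean, not the other way around, so there is no soft argument bounding the former by the latter; one needs the polynomial/algebraic structure (degree $n$, at most $n$ critical points, etc.), which is precisely what the Fryntov--Nazarov length formula encodes. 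As written, this step of your proposal is a hole, and it is the whole proof.

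If you want a self-contained route, the honest fix is to reproduce the Fryntov--Nazarov computation itself (their Section 4) and then finish with Schwarz rearrangement as the paper does; without that ingredient, tracking constants ``to get the factor $4\sqrt\pi$'' is premature because the inequality you are tracking has not been established.
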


\begin{proof}[Proof of Lemma \ref{lemma:FN}]
We have \cite[Sec. 4]{FN}
\begin{equation}\label{eq:FN}
L(\Lambda) \leq \sum_{\xi} \int_{\Lambda} \frac{1}{|z-\xi|} dA(z),
\end{equation}
where the sum is over all $2n-1$ points $\xi$ satisfying $p(\xi)p'(\xi)=0$.
(In \cite{FN} it was assumed that $t=1$ and that $p$ is monic in the definition of $\Lambda$, but \eqref{eq:FN} holds for arbitrary $t>0$ and for $p$ not necessarily monic.)

The Schwarz rearrangement inequality gives
\begin{equation}
\int_{\Lambda} \frac{1}{|z-\xi|} dA(z) \leq \int_{D_\Lambda} \frac{1}{|z|} dA(z),
\end{equation}
where $D_\Lambda$ denotes the disc centered at the origin with the same area as $\Lambda$.
Let $r_\Lambda$ denote the radius of $D_\Lambda$.
Then 
\begin{equation}
\int_{D_\Lambda} \frac{1}{|z|} dA(z) = 2\pi r_\Lambda,
\end{equation}
and 
\begin{align*}
2\pi r_\Lambda &= 2\sqrt{\pi} \sqrt{\pi r_{\Lambda}^2} \\
&= 2\sqrt{\pi} \sqrt{A(\Lambda)},
\end{align*}
in view of the way $D_\Lambda$ was chosen.
This gives
\begin{equation}
L(\Lambda) \leq (2n-1)2\sqrt{\pi} \sqrt{A(\Lambda)},
\end{equation}
which implies \eqref{eq:FNconcl} and completes the proof of the lemma.
\end{proof}

\begin{proof}[Proof of Lemma \ref{lem:inradius}]
From Lemma \ref{lem1} we have
\begin{equation}\label{eq:fromLemma}
\rho(\Lambda) \geq \frac{1}{18\pi} \frac{A(\Lambda)}{L(\Lambda)}.
\end{equation}
The result now follows from Lemma \ref{lemma:FN} and \eqref{eq:fromLemma}.
\end{proof}

\begin{remark}
The estimate \eqref{eq:inradArea} is asymptotically sharp apart from the value of the coefficient $\frac{1}{72\pi \sqrt{\pi}}$.
Indeed, the area of the Erd\"os lemniscate $\{ |z^n-1| < 1 \}$ approaches a constant, while the inradius is asymptotically proportional to $n^{-1}$.
As stated in \cite{Solynin}, the Erd\"os lemniscate seems to be a likely candidate for an extremal case in this problem.
\end{remark}

\section{Normality of area minimizing polynomials}\label{normalsection}
In this section we analyze the normality of area minimizing polynomials for various levels. For $t>0,$ we recall that by a $t$-level minimizer we mean a polynomial $p_{n, t}\in\mathcal{P}_n(\overline{\D})$ which attains the infimum $\kappa_n(\overline{\D},  t)$. We refer to the overview section for the proof of non-normality of the class $\mathcal{F}(t)$ for $t\in (1, \infty)$. 

\vspace{0.1in}

\noindent We next show that minimizers for levels $t\in (0,1)$ form a normal family in $\D$. For simplicity we only prove the result for $t=\frac 12.$ Before we proceed, recall that for the sequence $p_n^*(z)= z^n - 1$, we have that   $m(|p_n^*|< \frac{1}{2})\leq \frac{c_0}{n}$ for some absolute constant $c_0>0.$ Hence any sequence of $\frac{1}{2}$ level minimizers $\{f_n\},$ satisfy the upper bound
\begin{equation}\label{aprioribound}
m\left(\Lambda_{f_n}\left(\frac12\right) \right)\leq \frac{c_0}{n}
\end{equation}

\bthm\label{Level 0.5}
 Let $\{f_n\}_n$ be a sequence of $\frac{1}{2}$-level minimizers. Then, $\{f_n\}$ form a normal family in $\D$.
 


\ethm

\begin{proof}
Let $f_n(z) = \prod_{j=1}^{n}(z - a_{j,n})$, where $\vert a_{j,n}\vert\leq 1$. Consider the associated sequence of polynomials defined by $g_n(z) = \prod_{j=1}^{n}(1-\overline{a_{j,n}}z)$. Note that $g_n$ has all its zeros on $\D^c$ and  
\begin{equation}\label{fdomg}
\vert f_n(z)\vert\leq\vert g_n(z)\vert, \hspace{0.1in} z\in\overline{\D}.
\end{equation} 
The proof strategy is as follows: first we show that $\{g_n\}_n$ form a normal family in $\D$. Then since the family $B_n :=\frac{f_n}{g_n}$ is holomorphic and uniformly bounded by $1$ on $\D $ (by estimate \eqref{fdomg}), Montel's theorem guarantees that $\{B_n\}$ is normal. Taken together, this implies that $f_n = B_ng_n$ also forms a normal family in $\D.$

\vspace{0.1in}

\noindent It remains to prove normality of $\{g_n\}$. Towards this, observe that the estimates \eqref{aprioribound} and \eqref{fdomg} show that
\begin{equation}\label{gsmallarea}
m\left(\Lambda_{g_n}\left(\frac12\right)\cap\D\right)\leq \frac{c_0}{n}, \hspace{0.1in} n\in\N.    
\end{equation}
Armed with this bound, we make the following 

\vspace{0.1in}

\noindent \textbf{Claim:} Let $c_1=\frac{100c_0}{c_{NPS}}$ and $r_n =\sqrt{\frac{c_1\log n}{n}}$. Then for all $n$ large enough we have
\[\Lambda_{g_n}\left(\frac12\right)\cap\D\left(0, 1-r_n\right)=\emptyset.\] Assume the claim for now. Then on any compact subset of $\D$ we would have $\vert g_n\vert\geq\frac12$ for all large enough $n$. This in turn implies by Montel that $\{\frac{1}{g_n}\}$ being uniformly bounded by $2$ on compacts, forms a normal family in $\D$. Since $g_n(0) = 1$ for all $n$, any limit $g$ of the sequence $\frac{1}{g_n}$ is non vanishing in $\D$ and satisfies $g(0) = 1$. From here it follows that $\{g_n\}$ is also normal. Now if the claim were not true, then we can find a sequence of natural numbers $N\to\infty,$ and points $z_0 = z_0(N)\in\D\left(0, 1-r_N\right)\cap\partial\Lambda_{g_N}(\frac12)$. Note that $d(z_0, \T)> r_N$. Consider the function $u_N(z) =\log|2g_N(z)|.$ Then since $g_N$ has all zeros in $\D^c$, we have that $u_N$ is harmonic in $B = B(z_0, r_N).$ Applying Theorem \ref{NPS2} (in conjunction with Remark \ref{NPSpolynom}) to $u_N$, we obtain
\[m\left(\Lambda_{g_N}\left(\frac12\right)\cap B\right)= m(\{u_N < 0\}\cap B)\geq c_{NPS}\frac{r_{N}^2}{\log 2N}= 100^2\frac{c_0\log N}{N\log 2N}> \frac{c_0}{N}\]
for large $N$, by our choice of $c_1$ and $r_N $. This contradicts the estimate \eqref{gsmallarea} and concludes the proof of the Theorem.
\end{proof}

\vspace{0.1in}

\noindent The normality of the $1-$ level minimizers with all zeros on $\T$ (denoted simply by $p_n$) is more delicate and we were unable to prove it one way or the other. However we will make some observations here that sheds more light on consequences of normality. We refer the reader to the overview section for notations that are used below. Suppose one could show that for some $M\in\N$, the family $\{p_n^{k_{n}}\}_n$ is \emph{not normal} in any neighborhood the origin, where $k_{n} =  \lfloor e^{(\log n)^M}\rfloor$. Then following the same proof of lower bound as in the case for $t>1$, we would be able to improve the lower bound in Theorem \ref{thm:arealevel1} to order $\frac{1}{\log\log n}$ (at least for infinitely many natural numbers $n$). 

\vspace{0.1in}

\noindent On the other hand, if for every $M\in\N,$ the family $\{p_n^{k_{n}}\}_n$ is normal in $r\D$, for some $r\in (0, 1)$, we are able to show that the $\Lambda_{p_n}$ have ``unstable" behavior for their areas in that a small increase in the level increases the area by a lot. More specifically, $m(\{\vert p_n\vert < 1\})\to 0$ as $n\to\infty$, whereas $m(\{\vert p_n\vert < 1 + \frac{2}{k_{n}}\})\geq c_0(r)>0$ for all sufficiently large $n$ (depending on $M$). Indeed, suppose that for $M\in\N,$ the family $\{p_n^{k_n}\}$ was normal in $r\D$ for an $r\in (0, 1)$. We note that since all the zeros of $p_n$ are on $\T$, we have $\vert p_n(0)\vert = 1$. Since rotating all the zeros by a fixed angle does not alter the lemniscate areas, we may assume that $p_n(0) = 1$ without loss of generality. Using this fact, it is easily seen that normality of $\{p_n^{k_n}\}$ implies that $p_n^{k_n}\to 1$ uniformly
on compact subsets of $r\D$ (use $m(\{\vert p_n\vert < 1\})\to 0$). Differentiation gives that $k_n p_n^{k_n -1}p_n'\to 0$ uniformly on compacts of $r\D$, in particular on $\frac{r}{2}\D$. Combining this with $p_n^{k_n}\to 1$, we obtain that for large $n$, $\vert p_n'(z)\vert\leq \frac{2}{k_n}$ for all $z$ in $\frac{r}{2}\D$. This in turn yields
\begin{equation}\label{unstable}
\vert p_n(z) - p_n(0)\vert = \vert p_n(z) - 1\vert = \vert \int_{0}^{z}p_n' (w)dw\vert\leq \frac{2}{k_n}, \hspace{0.1in} z\in \frac{r}{2}\D
\end{equation}
The estimate \eqref{unstable} shows that $1 - \frac{2}{k_n}\leq\vert p_n(z)\vert \leq 1 + \frac{2}{k_n}$ for all $z\in \frac{r}{2}\D$. Remembering that $m(\{\vert p_n\vert < 1\})\to 0$, we see that the bounds in the previous line imply that 
\[m(\{\vert p_n\vert < 1 + \frac{2}{k_n}\})\geq m(\{1 <\vert p_n\vert < 1 + \frac{2}{k_n}\})\gtrsim \pi r^2/4,\] 
and this establishes the aforementioned unstable behavior of the areas.



\section{Numerics for the minimizing configuration of zeros}\label{sec:numerics}
The main unresolved issue is the gap between the $\frac{1}{\log \log n}$ upper bound and the $\frac{1}{\log n}$ lower bound in Theorem~\ref{thm:mainthmsimple}. 
While it seems prohibitively difficult to use numerics to distinguish between logarithmic and doubly-logarithmic rates (or something in between), numerical experiements can be used to probe the minimizing configuration of zeros for low-degree cases that may then reveal some structure allowing for exact or asymptotic calculations to resolve the area question.
As remarked in the introduction, $z^n-1$ is not the answer, although it is the answer to many other problems posed in \cite{EHP}. But we have shown that the zeros must equidistribute on $\mathbb T$. Thus, it is intriguing to know what the actual optimal configuration is! 
In this section we present numerical experiments up to degree $n=8$, arriving at conjectured zero configurations that achieve the minimal lemniscate area of $\kappa_n=\kappa_n(\mathbb T,1)$.

All our computations were done on Mathematica.  Whenever we say area of a configuration of $n$ points, we mean the area of the filled  lemniscate (of level-1, unless otherwise mentioned) of the monic polynomial with roots given by the configuration.  We tried two kinds of algorithms.
\begin{enumerate}
    \item  Fix $m\gg n$ and go over all $n$-element subsets of $S_m=\{e^{2\pi i\frac{k}{m}}\ : \ 0\le k\le m-1\}$ and choose the subset that gives the smallest area. There are $\binom{m}{n}$ subsets which makes it impossible to go over all of them except for small $m$ and $n$. Hence, to reduce complexity, we may consider only subsets with some symmetry (e.g., under conjugation) and those that contain $1$ (as rotation of all the points leads to the same result) etc.
    \item Start with an arbitrary configuration of roots $z_0,\ldots ,z_{n-1}$ in clockwise order with $z_0=1$. Run through $z_1,\ldots ,z_{n-1}$ ($z_0$ remains at $1$) and when it is the turn of $z_k$, consider the arc $[z_{k-1},z_{k+1}]$ and change $z_k$ to whatever $w$ in the arc minimizes the area (of course, we  discretize the arc and choose $w$ within that finite set). Keep cycling through the $z_k$s, till a stable solution is reached. This algorithm runs very fast (within 2 or 3 rounds), but it appears to get stuck in local minima that are not global minima, see Figure~\ref{Fig:localminimaforrecursivealgo}. 
\end{enumerate}
 As we know nothing about the local minima, we stick to the first algorithm henceforth.
\begin{figure}[h]
\includegraphics[scale=0.4]{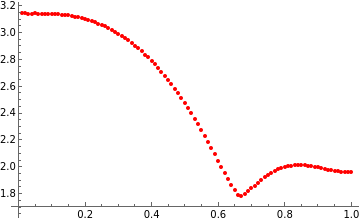}\;
\includegraphics[scale=0.4]{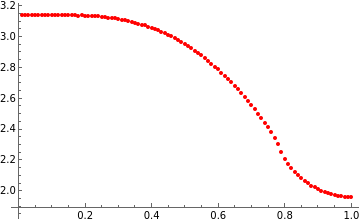}\;
\includegraphics[scale=0.4]{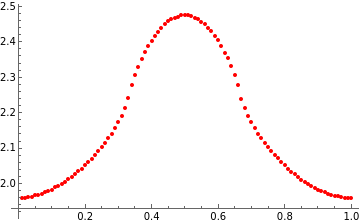}
\caption{Suppose $\arg z$ varies from $0$ to $1$. The lemniscate area is shown for these configurations with $n=3$: Left: $\{1,z,\bar{z}\}$, Middle: $\{1,1,z\}$, Right: $\{1,-1,z\}$. Conclusion: $\{1,1,-1\}$ is a local minimum in that changing any one root increases the lemniscate area. But it is not a global minimum.}\label{Fig:localminimaforrecursivealgo}
\end{figure}

\subsection*{Area computation} The key difficulty in carrying out the numerics is in the computation of the area of the filled lemniscate. We fixed on the following method (see Section~\ref{subsec:areacomputation} for comparison with other methods):

Randomly shift and rotate the triangular lattice $\Z+\Z e^{2\pi i/3}$,  and then scale it so that there are  $p$ (we took $p\approx 10^5$) lattice points in $2\D$. Call this collection of points $Q$.
For a region $R\subseteq 2\D$ that contains $N(R)$ points of $Q$, its area is taken to be  $4\pi N(R)/p$. Often we repeat this computation $T$ times (changing the  random shift and rotation in each round), and compute the mean and standard deviation to get an idea of the accuracy.

\subsection{First round of experiments}
Fix  $n$. Let $S=\{e^{i\pi j/m}\suchthat 1\le j\le m\}$, a set of $m$ equi-spaced points on the top half of the unit circle. For any $q,s,r$ such that $q+s+2r=n$, we take all  configurations with $q$ roots at $1$ and $s$ roots at $-1$ and $r$ roots in $S$ and their conjugates (so $S$ is symmetric about the $x$-axis)\footnote{We do not know if this symmetry must hold. However, experiments for very small $n$ without this assumption gave the same results.  Secondly, if $p_n$ is optimal for degree $n$, then $q_{2n}(z)=p_n(z)\overline{p_n(\overline{z})}$ is a polynomials of degree $2n$ whose roots are symmetric about the $x$-axis, and $\mbox{area}\{|q_n|<1\}\le 2\mbox{area}\{|p_n|<1\}$. By the bounds on $\kappa_n$ in Theorem~\ref{thm:arealevel1}, this means that $q_n$ also has area going to zero at the same rate as $p_n$. In other words, the symmetry restriction still gives us near-optimal configurations, if not optimal. }
. This gives $\binom{m}{r}$ configurations (for fixed $q,s,r$). 

We use $p\approx 100000$ points of the randomly shifted and rotated hexagonal lattice. We make $T$ trials (only changing the shift and rotation of the lattice points in each trial) where $T=20$ for $n\le 5$, $T=6$ for $n=6$. For $n=7,8$, we went by the pattern and assumed that $q\ge 2$ (which implies that $r\le 2$). The results of the experiment are shown in the table below. 

What is shown as the optimal configuration occurred in many trials, and in others, it was just off by a tiny bit. For example, for $n=4$, the output was  $\{1,e^{\pm 2\pi i \frac{1}{3}}\}$ in 11 trials and $\{1,e^{\pm 2\pi i \frac{74}{100}}\}$ in the remaining 9 trials. Similarly for $n=5$, the configuration $\{1,1,e^{\pm \pi i \frac{3}{5}},-1\}$ occurred all 20 times when we took $q=2,s=1$. The equivalent configuration $\{-1,-1,e^{\pm \pi i \frac{2}{5}},1\}$ all 20 times.

\begin{align*}
    \begin{array}{c|c|c}
   n & m & \mbox{Opt. config.} \\
   \hline
   3 & 99 & 1,e^{\pm 2\pi i\frac13}\\
   \hline
   4 & 100 & 1,1,e^{\pm \pi i \frac{3}{4}} \\
   \hline
   5 & 100 \mbox{ for }r\le 1\mbox{ and }\  50 \mbox{ for }r\ge 2 & 1,1,e^{\pm \pi i \frac{3}{5}},-1 \\
   \hline
   6 & 60 \mbox{ for }r\le 1 \mbox{ and }  24 \mbox{ for }r\ge 2 & 1,1,e^{\pm \pi i \frac{3}{6}},e^{\pm \pi i \frac{5}{6}} \\
   \hline
   7 & 70 (\mbox{ only } r\le 2) & 1,1,e^{\pm \pi i \frac{3}{7}},e^{\pm \pi i \frac{5}{7}},-1 \\
   \hline
   8 & 80 (\mbox{ only } r\le 2) & 1,1,1,e^{\pm \pi i \frac{4}{8}},e^{\pm \pi i \frac{6}{8}},-1 \\
   \hline
    \end{array}
\end{align*}

\begin{figure}[h] 
\includegraphics[width=0.3\textwidth]{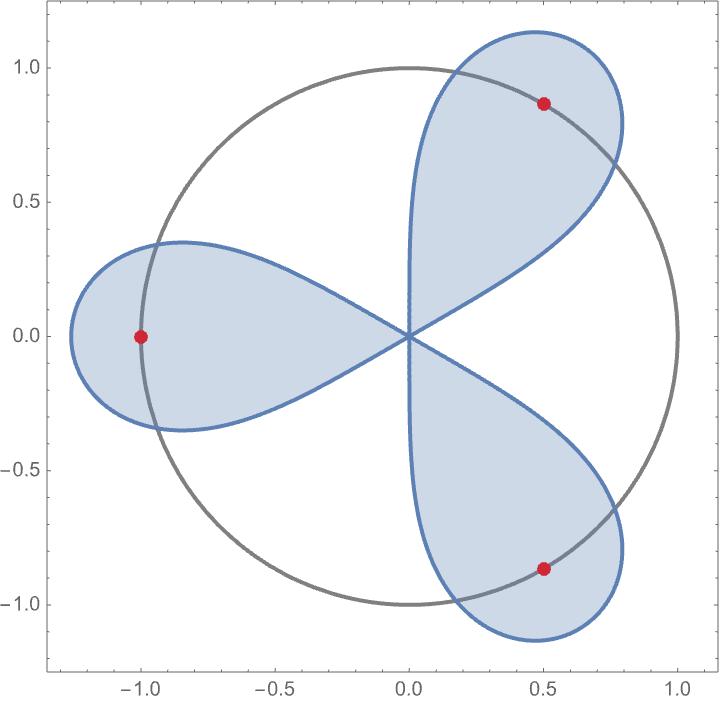}
\includegraphics[width=0.3\textwidth]{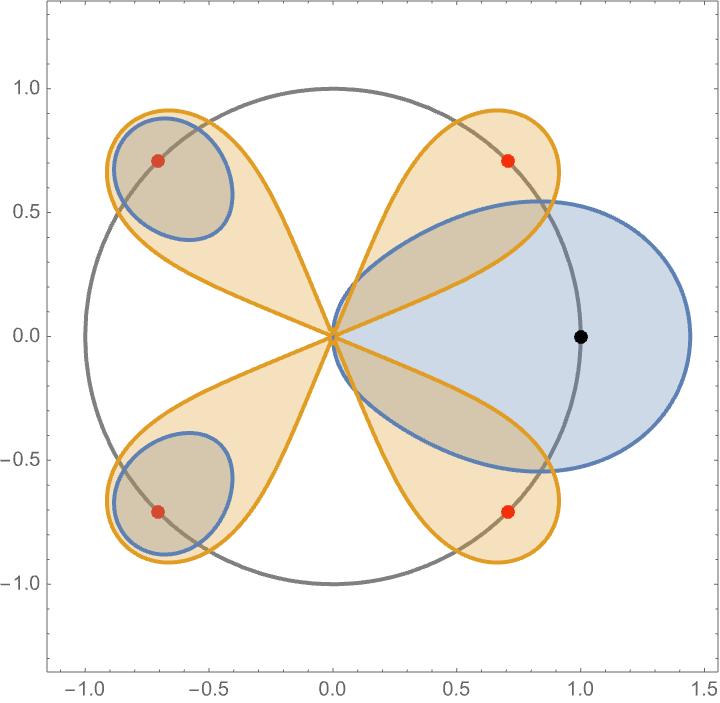}
\includegraphics[width=0.3\textwidth]{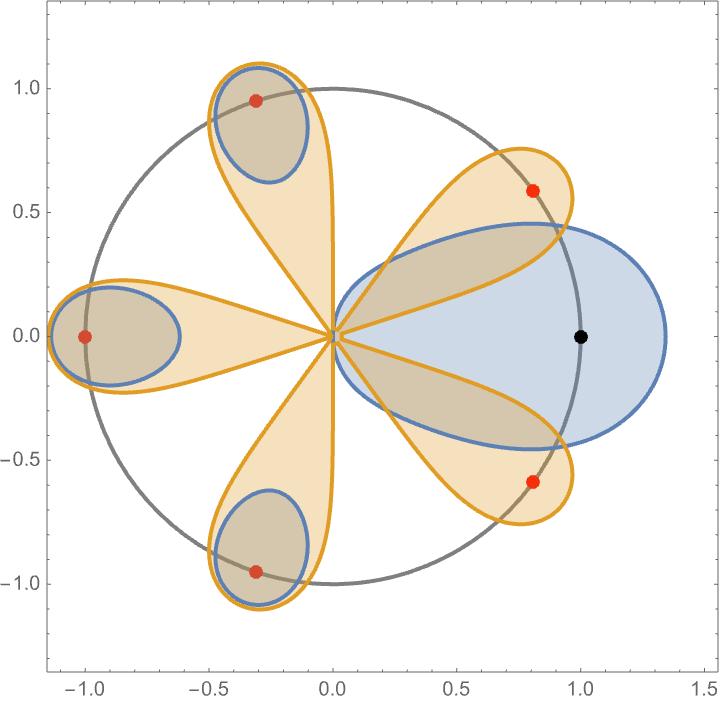} \\
\includegraphics[width=0.3\textwidth]{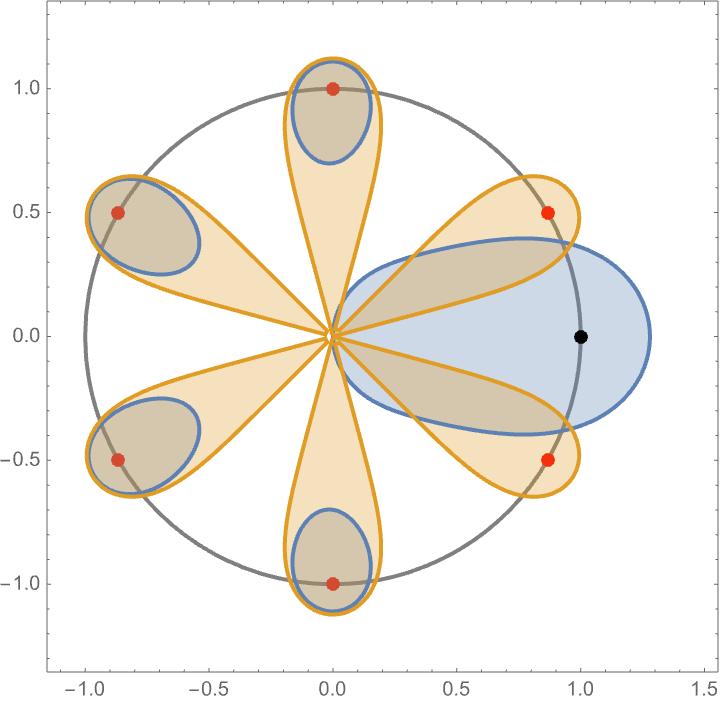}
\includegraphics[width=0.3\textwidth]{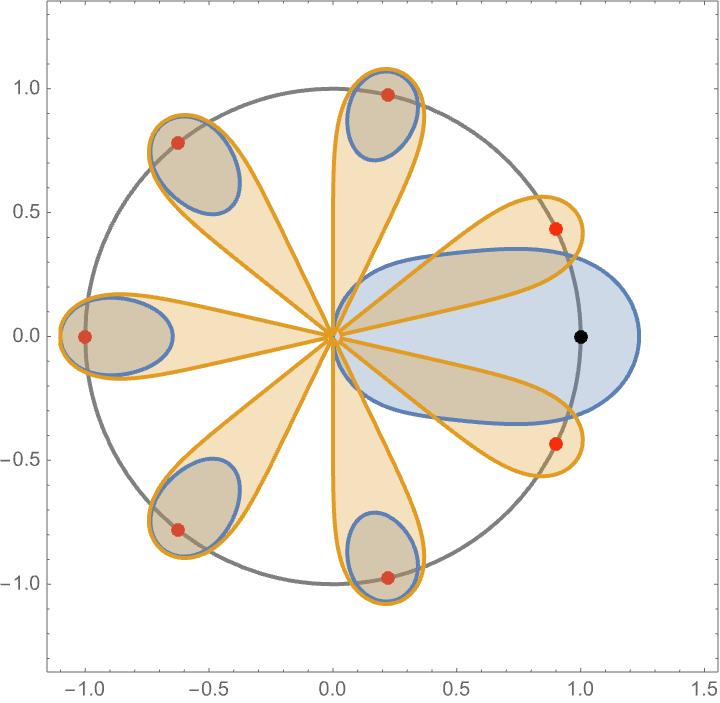}
\includegraphics[width=0.3\textwidth]{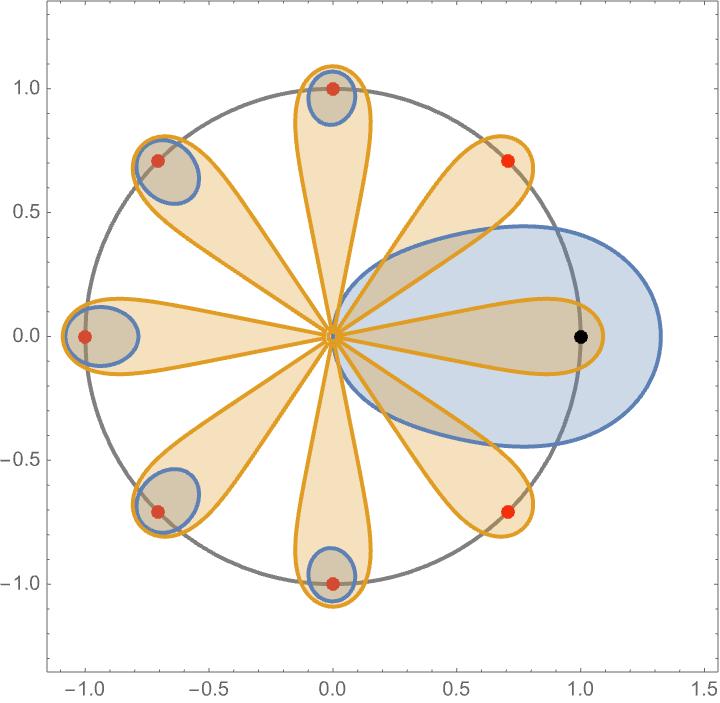}
\caption{The empirical minimizers for $n=3,4,5,6,7,8$.  For reference, we also plot the (possibly rotated) Erd\"os lemniscate of corresponding degree as well as the unit circle. For $n=3$ the empirical minimizer is the Erd\"os lemniscate.  In the other cases the minimizer consists of a large oval, formed by ``merging'' some petals in the corresponding Erd\"os lemniscate, along with small ovals within the remaining petals).}
\label{fig:lemniscatesupto8}
\end{figure}

In all cases the points in the optimal configuration are $2n$th roots of unity. In fact, 
   by a rotation, we can make all roots except at most one to be a $n$th root of unity, e.g., the configuration $\{1,1,e^{\pm \pi i \frac{3}{7}},e^{\pm \pi i \frac{5}{7}},-1\}$ becomes $\{e^{\pi i\frac{1}{7}},e^{\pi i\frac{1}{7}},e^{\pm \pi i \frac{4}{7}},e^{\pm \pi i \frac{6}{7}},e^{\pi i \frac{8}{7}}\}$ upon rotation by the angle $\frac{\pi}{7}$. Similar consideration applies to the configuration obtained for $4\le n\le 7$..

   For $1\le h\le n$, let  $\mathcal C_{n,h}$ be the configuration which is got by starting with the $n$th roots of unity and replacing the   $h$ consecutive roots $e^{2\pi i k/n}$, $0\le k\le h-1$, by one root of multiplicity $h$ placed at $e^{\pi i \frac{h-1}{n}}$ (mid-point of the arc containing the replaced roots). The remaining $n-h$ roots remain undisturbed. 

   With this notation, the minimizer configurations obtained numerically above are $\mathcal C_{2,1}$, $\mathcal C_{3,1}$, $\mathcal C_{4,2}$, $\mathcal C_{5,2}$, $\mathcal C_{6,2}$, $\mathcal C_{7,2}$,$\mathcal C_{8,3}$. See Figure~\ref{fig:lemniscatesupto8} for a visual comparison of these lemniscates to the corresponding $\mathcal C_{n,1}$ (the lemniscate of $z^n-1$).

{\em Assuming} that the minimizing configuration is among the $2n$th roots of unity, we  could proceed to somewhat higher values of $n$, as it is essentially the same experiment as above, but with $m=2n$, which is smaller than the $m$ we took in the first round. These experiments again gave the optimal configuration as $\mathcal C_{n,h}$ for some $h$.

\subsection{On the configurations  $\mathcal C_{n,h}$}  It is tempting to think that $\mathcal C_{n,h}$ is the optimal configuration for some $h=h(n)$. However, we can prove that these cannot be the optimal for large $n$, as their areas cannot go to zero as $n\to \infty$. 

\begin{claim}
Uniformly over $n\ge 3$ and $1\le h\le n-1$, the area of the configuration    $\mathcal C_{n,h}$ is bounded below by a positive constant. 
\end{claim}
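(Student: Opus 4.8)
The plan is to show that the lemniscate $\Lambda_{p_{n,h}}$ of $\mathcal C_{n,h}$ contains a region of positive area near the origin, by comparing $p_{n,h}$ there with $z^{n}-1$ (whose lemniscate, the Erd\"os lemniscate, is well understood). Write $\omega_{k}=e^{2\pi ik/n}$ and $a=e^{i\pi(h-1)/n}$, so that
\[
p_{n,h}(z)=(z-a)^{h}\prod_{k=h}^{n-1}(z-\omega_{k})=(z-a)^{h}\,\frac{z^{n}-1}{\prod_{k=0}^{h-1}(z-\omega_{k})}.
\]
The $h$ removed roots $\omega_{0},\dots,\omega_{h-1}$ are symmetric about $a$ under reflection in the ray through $a$: indeed reflection sends $\omega_{k}\mapsto\omega_{h-1-k}$, so the pair $\{\omega_{k},\omega_{h-1-k}\}$ equals $\{ae^{-i\gamma_{k}},ae^{i\gamma_{k}}\}$ with $\gamma_{k}=\pi(h-1-2k)/n\in(0,\pi)$, and when $h$ is odd the middle root is $\omega_{(h-1)/2}=a$ itself. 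Hence
\[
|p_{n,h}(z)|=\Bigl(\ \prod_{k}\frac{|z-a|^{2}}{\,|z-ae^{-i\gamma_{k}}|\,|z-ae^{i\gamma_{k}}|\,}\ \Bigr)\,|z^{n}-1|,
\]
the product running over the $\lfloor h/2\rfloor$ reflection pairs (empty when $h=1$).

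Next I would prove a pointwise comparison on a region independent of $n$ and $h$. A short computation — expand $|w-e^{i\gamma}|\,|w-e^{-i\gamma}|=|w^{2}-2w\cos\gamma+1|$ and write $(w-1)^{2}=(w^{2}-2w\cos\gamma+1)-2w(1-\cos\gamma)$ — shows that for $0<\gamma<\pi$,
\[
|w-1|^{2}\le |w-e^{i\gamma}|\,|w-e^{-i\gamma}|\quad\Longleftrightarrow\quad \Re w\ \ge\ \frac{(1+\cos\gamma)\,|w|^{2}}{1+|w|^{2}} .
\]
Since $1+\cos\gamma\le 2$, on $\Omega:=\bigl\{w\in\D:\Re w\ge 2|w|^{2}/(1+|w|^{2})\bigr\}$ the left-hand inequality holds for \emph{every} $\gamma\in(0,\pi)$. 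Applying this with $w=z/a$ to each factor of the displayed identity (note $|z/a|=|z|$ and $|z/a-1|=|z-a|$, etc.) gives $|p_{n,h}(z)|\le |z^{n}-1|$ whenever $z\in a\Omega$, hence
\[
\Lambda_{p_{n,h}}\ \supseteq\ a\Omega\cap\{|z^{n}-1|<1\}.
\]

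Finally I would bound $m\bigl(a\Omega\cap\{|z^{n}-1|<1\}\bigr)$ below, uniformly in $n,h$. On a circle $\{|z|=r\}$ with $0<r<\delta$ ($\delta$ a fixed small absolute constant), $\{|z^{n}-1|<1\}$ meets it in $n$ equally spaced arcs — one about each $n$-th root of unity — each of angular length $2\arccos(r^{n}/2)/n$, which is close to $\pi/n$ since $r^{n}\le\delta^{n}$ is tiny; meanwhile $a\Omega\supseteq a\{re^{i\theta}:\cos\theta\ge 2r\}$ contains the sector about the direction $a$ of half-angle $\arccos(2\delta)\approx\pi/2$. The arcs whose centers fall well inside this sector survive, and there are at least $\approx n\arccos(2\delta)/\pi$ of them, so the circle $\{|z|=r\}$ meets $a\Omega\cap\{|z^{n}-1|<1\}$ in a set of angular measure at least an absolute constant $c_{1}>0$, provided $n$ exceeds an absolute constant $N_{0}$; for the finitely many $n\le N_{0}$ one only needs that each $\mathcal C_{n,h}$, being a nonempty open lemniscate, has positive area. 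Integrating over $r\in(0,\delta)$,
\[
m(\Lambda_{p_{n,h}})\ \ge\ m\bigl(a\Omega\cap\{|z^{n}-1|<1\}\bigr)\ \ge\ \int_{0}^{\delta}c_{1}\,r\,dr\ =\ \tfrac12 c_{1}\delta^{2}\ >\ 0,
\]
uniformly over $n\ge 3$ and $1\le h\le n-1$. The one delicate point is the first step — verifying that the $h$ removed roots really are symmetric about the merged root $a$, which is precisely what forces each replacement factor $|z-a|^{2}/(|z-ae^{-i\gamma}||z-ae^{i\gamma}|)$ to be $\le 1$ on a fixed region; after that, the comparison inequality is elementary and the last step is just the standard near-origin picture of the Erd\"os lemniscate, where the $n$ petals accumulate at $0$.
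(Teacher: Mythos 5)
Your proof is correct, and it is structurally the same as the paper's: both rotate so the merged root is in a fixed direction, pair the removed roots symmetrically about it, characterize the region where each replacement factor $\frac{|z-a|^2}{|z-ae^{i\gamma}|\,|z-ae^{-i\gamma}|}$ is at most $1$, and finish by examining the Erd\"os lemniscate near the origin. Your identity $|w-1|^2\le |w-e^{i\gamma}||w-e^{-i\gamma}|\iff (1+|w|^2)\Re w\ge |w|^2(1+\cos\gamma)$ is exactly the paper's, and your region $\Omega$ is the paper's $A\cap\D$. Where you deviate is the endgame. The paper works with the enlarged region $A_\eps=\{(1+r^2)\cos\theta\ge(2-\eps)r\}$ and asserts that on $A_\eps$ each factor is at most $1-\delta$ for a constant $\delta=\delta(\eps)$, then beats $|z^n-1|<1+r^n$ with the product $(1-\delta)^h$. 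That assertion is not uniform in $n$: for the first pair $\gamma_1=2\pi/n\to 0$, the factor tends to $1$ on any fixed compact, and on $A_\eps\setminus A$ it can even exceed $1$ (the characterization gives $\le 1$ only when $\cos\gamma\le 1-\eps$). So for small $h$, e.g.\ $h=3$ where there is a single pair, the paper's multiplicative argument does not close as written. Your version sidesteps this entirely: you keep only $|p_{n,h}|\le|z^n-1|$ on $a\Omega$ (which \emph{is} uniform, since on $A$ the factors are $\le 1$ for \emph{every} $\gamma$) and then intersect with $\{|z^n-1|<1\}$, whose area inside any fixed sector near $0$ is bounded below by a constant for all large $n$ because the petals of the Erd\"os lemniscate accumulate at $0$ with alternating occupancy $\approx\pi/n$. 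This one-sided comparison plus the explicit geometry of $\{|z^n-1|<1\}$ near the origin is a genuine repair of the argument, and I would recommend it over the published version. The only minor things to tidy are the finitely many small $n$ (which you handle by openness and nonemptiness) and a sentence making explicit that the rotation $z\mapsto z/a$ is area-preserving so working with $\Omega$ rather than $a\Omega$ costs nothing.
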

\begin{proof}
For notational simplicity, we only prove the statement for odd $h$. The proof for the case $h$ even proceeds along similar lines. Fix $0<\alpha<\frac{\pi}{2}$ and observe that
\[
\frac{|z-1|^2}{|(z-e^{i\alpha})(z-e^{-i\alpha})|}\le 1 \Leftrightarrow (1+r^2)\cos \theta \ge r(1+\cos \alpha)
\]
where $z=re^{i\theta}$. This region is decreasing in $\alpha$ and converges to $A=\{z\suchthat (1+r^2)\cos \theta\ge 2r\}$. This set has positive area, as does  $A_{\eps}=\{z\suchthat (1+r^2)\cos \theta\ge (2-\eps) r\}$ if $\eps$ is small.

The polynomial corresponding to $\mathcal C_{n,2h+1}$ is 
\[
Q_{n}(z)=(z^n-1)\prod_{k=1}^h\frac{(z-1)^2}{(z-e^{2\pi i k/n})(z-e^{-2\pi i k/n})}.
\]
If $z\in A_{\eps}$, then  each of the $h$ factors is bounded above by $1-\delta$ for some $\delta=\delta(\eps)$. If further $r<1-\delta$, then $|z^n-1|<1+r^n<1+\delta$ if $n$ is large. For $h\ge 1$, this shows that $|Q_{n}(z)|<1$ for $z\in A_{\eps}\cap (1-\eps)\D$ which has constant area. For $h=0$, we have the Erd\"{o}s lemniscate $|z^n-1|\le 1$, which is easily seen to have constant area.
\end{proof}

\para{Conclusions} We conjecture that the optimal configuration is to be found among the $2n$th roots of unity. If so, since $\mathcal C_{n,h}$ is not the optimal solution, it seems likely that there are several roots with multiplicities higher than $1$. It is a numerical challenge worthy of pursuit to extend these experiments to higher degree $n$ until a clear pattern emerges.

\subsection{Methods for computing areas}\label{subsec:areacomputation} To compare different methods, we  tested them on the lemniscate $\{z : |z^n-1|<1\}$ which has area equal to
\begin{align*}
R(n)=2^{\frac{2}{n}}\frac{\sqrt{\pi}\Gamma(\frac12+\frac{1}{n})}{2\Gamma(1+\frac{1}{n})}.
\end{align*}
Consider
\begin{itemize}
    \item $R_1(n)$  based on the RegionMeasure command in Mathematica.
    \item $R_2(n)$ based on summing over $100651$ points of $L\cap 2\D$, where $L$ is the randomly shifted and rotated square lattice.
    \item $R_3(n)$ based on summing over $100668$ points of $L\cap 2\D$, where $L$ is the randomly shifted and rotated triangular  lattice.
    \item $R_4(n)$ based on summing over $100000$ points sampled uniformly at random from $2\D$.
\end{itemize}
The results of one experiment are summarized in the following table.
\begin{align*}
\begin{array}{c|c||c|c|c|c|}
n &R(n) & R_1(n)-R(n) & R_2(n)-R(n) & R_3(n)-R(n) & R_4(n)-R(n) \\
\hline 
2 & 2.	& -0.009544	& 0.000611 &	-0.000101&	0.010745\\
3 & 1.77829&	-0.010207&	-0.000161&	-0.000461&	0.005257\\
4 & 1.69443&	-0.01106&	-0.000074&	0.000763&	-0.010533\\
5 & 1.65321&	-0.00884&	-0.00168&	0.000288&	-0.005883\\
6 & 1.62978&	-0.009519&	-0.001221&	-0.000123&	-0.008715\\
7 & 1.61514&	-0.010185&	0.000304&	0.000281&	0.013711\\
8 & 1.60538&	-0.081407&	-0.000291&	-0.000687&	-0.000147\\
9 & 1.59853&	-0.081346&	0.000066&	0.000046&	0.002431\\
10 & 1.59353&	-0.047055&	0.000813&	-0.00033&	0.023633\\
\hline 
\end{array}
\end{align*}
From this table, it is evident that $R_{3}$ performs better than the others.


\subsection*{Acknowledgments}
The first named author is partly  supported by the DST FIST program-2021[TPN-700661]. The second-named author acknowledges support from the Simons Foundation (grant 712397).

\bibliographystyle{abbrv}
\bibliography{ref}

\end{document}